\documentclass[11pt,a4paper]{amsart}
\usepackage[english]{babel}
\selectlanguage{english}
\usepackage[T1]{fontenc}
\usepackage[utf8]{inputenc}

\usepackage{amsthm}
\usepackage{amsmath}
\usepackage{amssymb}
\usepackage{mathrsfs}
\usepackage{mathtools}
\usepackage{stmaryrd}
\usepackage{csquotes}
\usepackage{mathptmx}% \bigsqcup
\usepackage{shuffle}% \shuffle
\usepackage{bm} % \bm, bold charachters in math mode
\usepackage{comment}
\usepackage{physics}

\usepackage[section]{placeins}

\usepackage{graphicx}
\usepackage{hyperref}
\usepackage{lmodern}
\usepackage{fullpage}
\usepackage{fancybox}
\usepackage{xcolor}
\usepackage{relsize}
\usepackage{ifthen}
\usepackage{tikz}
\usepackage{tikz-cd}
\usetikzlibrary{matrix}

\usepackage[left=2cm, right=2cm, top=3cm, bottom=3cm]{geometry}

%\usepackage{biblatex}
%\addbibresource{bibliography.bib}

\newcommand\bb{\mathbb}

\DeclareMathOperator{\GL}{GL}
\DeclareMathOperator{\Id}{Id}

\newcommand\dif{\text{dif}}
\newcommand\inv{\text{inv}}

\newcommand\boxcon{{~\boxed{\!*\!}~}}
\newcommand\smboxcon{{\boxed{\!\!*\!\!}}}
\newcommand\boxconv{{~\boxed{\!*\!}|~}}
%{{~\boxed{\!*\!}~}}
\newcommand\boxconvred{{~\overline{\boxed{\!*\!}}~}}
%{{~\check{\boxed{\!*\!}}~}}
\newcommand\boxconvredred{{~|\boxed{\!*\!}|~}}
%{{~\check{\check{\boxed{\!*\!}}}~}}
\newcommand\boxconvline{{~{|\boxed{\!*\!}}~}}
%{{~\overline{\boxed{\!*\!}}~}}

\newcommand\Mult{\text{Mult}}

 %One-vertex tree as a circle

\newcommand\sqdot{\mathbin{\vcenter{\hbox{\rule{.5ex}{.5ex}}}}}

% Monotone, boolean, subordination symbols
\newcommand\boxsemitimes{{\hspace{2pt}\begin{tikzpicture}[scale=.24] \draw (0,0)--(1,0)--(1,1)--(0,1)--(0,0)--(1,1); \end{tikzpicture}\hspace{2pt}}}
\newcommand\boxsemiplus{{\hspace{2pt}\begin{tikzpicture}[scale=.24] \draw (0,0)--(1,0)--(1,1)--(0,1)--(0,0) (.5,0)--(.5,1) (.5,.5)--(1,.5); \end{tikzpicture}\hspace{2pt}}}
\newcommand\starbox{{\hspace{2pt}\begin{tikzpicture}[scale=.24] \draw (0,0)--(1,0)--(1,1)--(0,1)--cycle (.3,.3)--(.7,.7) (.3,.7)--(.7,.3) (.2,.5)--(.8,.5) (.5,.2)--(.5,.8); \end{tikzpicture}\hspace{2pt}}}
% Their small versions
\newcommand\smboxsemitimes{{\begin{tikzpicture}[scale=.15] \draw[thin] (0,0)--(1,0)--(1,1)--(0,1)--(0,0)--(1,1); \end{tikzpicture}}}
\newcommand\smboxsemiplus{{\begin{tikzpicture}[scale=.15] \draw (0,0)--(1,0)--(1,1)--(0,1)--(0,0) (.5,0)--(.5,1) (.5,.5)--(1,.5); \end{tikzpicture}}}
\newcommand\smstarbox{{\begin{tikzpicture}[scale=.15] \draw (0,0)--(1,0)--(1,1)--(0,1)--cycle (.3,.3)--(.7,.7) (.3,.7)--(.7,.3) (.2,.5)--(.8,.5) (.5,.2)--(.5,.8); \end{tikzpicture}}}

%A few other partitions

\theoremstyle{plain}
\newtheorem{thm}{Theorem}[section]
\newtheorem{prop}[thm]{Proposition}
\newtheorem{lem}[thm]{Lemma}
\newtheorem{coroll}[thm]{Corollary}

\theoremstyle{definition}
\newtheorem{defn}[thm]{Definition}

\theoremstyle{remark}
\newtheorem{rmk}[thm]{Remark}

\title{A post-group theoretic perspective on the\\ operator-valued S-transform in free probability}
%{Post-groups, post-Lie algebras and crossed morphisms coming from operator-valued free probability}

\author[Kurusch Ebrahimi-Fard]{Kurusch Ebrahimi-Fard${}^{\diamond}$}
\address[${}^{\diamond}$]{Department of Mathematical Sciences, Norwegian University of Science and Technology (NTNU), 7491 Trondheim, Norway. Centre for Advanced Study (CAS), 0271 Oslo, Norway.}
\email{kurusch.ebrahimi-fard@ntnu.no}
\urladdr{https://folk.ntnu.no/kurusche/}

\author[Timoth\'e Ringeard]{Timoth\'e Ringeard${}^{\dagger}$}
\address[${}^{\dagger}$]{\'Ecole Normale Sup\'erieure, 75005 Paris, France.}
\email{timothe.ringeard@ens.psl.eu}
%\urladdr{}

\subjclass[2020]{46L54, 22E60, 17B38, 16W60}
\keywords{operator-valued free probability, R-transform, S-transform, post-group, post-Lie algebra, pre-Lie algebra, crossed morphism, Nijenhuis operator, relative Rota--Baxter operator}

\date{\today}

\begin{document}

\maketitle

\begin{abstract}
We investigate the algebraic structure underlying Voiculescu's S-transform in the setting of operator-valued free probability. We show that its twisted factorisation property gives rise to post-groups, crossed morphisms, as well as pre- and post-Lie algebras.
\end{abstract}

\tableofcontents

%%%%%%%%%%%%%%%%%%%%%%%%%%%%%%%%%%%%%%%%%%%%%%%%%%%%%%%%%%%%%%%%%%%%%%%%%%%%%%%%%%%%%%%%%%%%%%%%%%%
%%%%%%%%%%%%%%%%%%%%%%%%%%%%%%%%%%%%%%%%%%%%%%%%%%%%%%%%%%%%%%%%%%%%%%%%%%%%%%%%%%%%%%%%%%%%%%%%%%%

\section{Introduction}
\label{sec:intro}

Noncommutative probability theory extends classical probability by accommodating the case of random variables which do not necessarily commute. It provides an approach for exploring noncommutative probability distributions. A prime example is Voiculescu's free probability theory \cite{VDN1992}, where the concept of freeness replaces the classical notion of independence. In \cite{speicher1994multiplicative}, Speicher showed that the combinatorial aspects underlying the relationships among moments and cumulants in free probability are intimately connected to noncrossing set partitions. 

The R- and S-transforms are formal power series that play an integral part in free probability, offering a mathematical framework for studying the relationships among free random variables \cite{nica_speicher_book}. Starting from a noncommutative probability space $(\mathcal{A}, \phi)$, which consists of an unital linear functional $\phi: \mathcal{A} \to \mathbb{K}$ mapping an unital associative algebra $\mathcal{A}$ into the base field $\mathbb{K}$ ($\phi(1_\mathcal{A} )=1$), and assuming that the random variable $a \in \mathcal{A}$ is such that $\phi(a)=1$, then the S- and R-transforms, $S_a(z) \in 1 + z\mathbb{K}\langle\langle z \rangle\rangle$ respectively $R_a(z) \in z+z^2\mathbb{K}\langle\langle z \rangle\rangle$, are related through
\allowdisplaybreaks
\begin{equation}
\label{scalarRSrel}
	zS_a(z)=R_a(z)^{\circ -1}.
\end{equation}
The inverse of the R-transform on the right-hand side of \eqref{scalarRSrel} is defined with respect to the standard composition of formal power series within $z+z^2\mathbb{K}\langle\langle z \rangle\rangle$. A third formal power series is the moment series, $M_a(z)= \sum_{n>0}\phi(a^n)z^n \in z+z^2\mathbb{K}\langle\langle z \rangle\rangle$, which relates to the S- respectively R-transform as follows
\begin{equation}
\label{scalarmoments}
	zS_a(z)=(1+z) M_a(z)^{\circ -1}.
\end{equation}

Both the S- and R-transforms exhibit specific properties concerning the addition ($a+b \in \mathcal{A}$) and multiplication ($ab \in \mathcal{A}$) of two free random variables. Indeed, the R-transform linearises the former
\begin{equation}
\label{scalarR}
	R_{a+b}(z) = R_{a}(z) + R_{b}(z)
\end{equation}
whereas the S-transform factorises the latter
\begin{equation}
\label{scalarS}
	S_{ab}(z) = S_{a}(z) S_{b}(z). 
\end{equation}
The product on the righthand side of \eqref{scalarS} is the commutative Cauchy product on $\mathbb{K}\langle\langle z \rangle\rangle$.  

Free cumulants are the coefficients of the R-transform of $a \in \mathcal{A}$, $R_{a}(z) = \sum_{n>0} k_n^az^n$. Using that freeness of random variables is equivalent to the vanishing of mixed free cumulants \cite[Theorem 11.20, p.186]{nica_speicher_book}, the coefficients of the R-transform satisfy for a product $ab$ of free random variables the well-known formula \cite[Theorem 14.4, p.226]{nica_speicher_book}  
\begin{equation}
\label{scalarcumulants}
	k_{n}^{ab} = \sum_{\pi \in \mathrm{NC}(n)} k_{\pi}^{a} k_{{\rm Kr}(\pi)}^{b}.
\end{equation}
Here the sum runs over the set $\mathrm{NC}(n)$ of noncrossing partitions of order $n$ and ${\rm Kr}(\pi) \in \mathrm{NC}(n)$ is the so-called Kreweras complement of the noncrossing partition $\pi \in \mathrm{NC}(n)$ \cite[Definition 9.21, p.147]{nica_speicher_book}.

Following \cite{nica_speicher_book} -- to which we refer the reader for details, -- identity \eqref{scalarcumulants} is extended to the level of formal power series in terms of the so-called boxed convolution product \cite[Definition 17.1, p.273]{nica_speicher_book}
$$
	R_{ab}(z) = (R_{a}  \boxcon R_b)(z),  
$$
which defines a group law on $z + z^2 \mathbb{K}\langle\langle z \rangle\rangle$. In Theorem 18.14 and Corollary 18.17, Nica and Speicher \cite[p.294]{nica_speicher_book} deduce identity \eqref{scalarS} by showing that relation \eqref{scalarRSrel} codes a group isomorphism. Indeed, Theorem 18.14 identifies the map $\mathcal{F}$ from the group $(z + z^2 \mathbb{K}\langle\langle z \rangle\rangle,\boxcon )$ to the group $1 + \mathbb{K}\langle\langle z \rangle\rangle$ with commutative Cauchy product,  defined such that 
\begin{equation}
\label{eq:S_transf_as_F_map}
	\mathcal{F}(R_a)(z)=S_a(z),
\end{equation} 
as a group isomorphism, that is
\begin{equation}
\label{eq:groupmorph}
	\mathcal{F}(R_{a}  \boxcon R_b)(z)=S_a(z)S_b(z).
\end{equation} 

\smallskip

Voiculescu extended free probability to the realm of noncommutative operator-valued random variables \cite{voiculescu1995}. Following Dykema \cite{dykema2005strans}, the operator-valued counterparts of the R- and S-transforms are now formal multilinear function series defined over an operator-valued probability space \cite{MingoSpeicher2017}. In this framework, Dykema generalised the scalar-valued identity \eqref{scalarS} by showing that the operator-valued S-transform satisfies a twisted factorisation with respect to the product $ab$ of two free random variables
\begin{equation}
\label{operatorS}
	S_{ab} = S_{b} \cdot S_{a}\circ (S_{b}^{-1}\cdot  I \cdot  S_{b}).
\end{equation}

In \cite{part1}, we approach \eqref{scalarcumulants} and \eqref{operatorS} in the operator-valued setting using planar binary trees and their combinatorial properties instead of noncrossing partitions. The proofs of both operator-valued identities are presented through the utilisation of four convolution-type products defined explicitly on formal multilinear function series in terms of combinatorial operations on planar binary trees \cite[Definition 3.16]{part1}. 

\smallskip

In the work at hand, we revisit Nica's and Speicher's Theorem 18.14 and Corollary 18.17 \cite[p.294]{nica_speicher_book}, by focusing on the distinctive structure of the twisted factorisation property \eqref{operatorS}. Unveiling the algebraic intricacies of the right-hand side of \eqref{operatorS} which combines multiplication and composition of formal series of multilinear functions, we describe several interconnected post-groups \cite{al-kaabi_ebrahimi-fard_manchon,Bai_Guo_Sheng_Tang_post_groups,KIA2023}, as well as post-Lie algebras, the corresponding infinitesimal objects. Additionally, we offer insights into the relations among the involved post-groups through the use of the notion of crossed morphisms \cite{KIA2023,hilgert-neeb2010}.

\medskip

An outline of the paper is presented now. In Section \ref{sec:op-valued_proba}, we recall from Dykema \cite{dykema2005strans} the construction of two binary products, composition and multiplication, on formal multilinear function series, as well as several basic facts about operator-valued probability spaces. We describe how multilinear function series can be used to generalise the moment and cumulant series (or R-transform) associated to a random variable in an operator-valued probability space. 

In Section \ref{sec:group_str}, we describe group laws over subsets of multilinear function series. The starting point are the generalisations to the operator-valued framework of the Cauchy product defined over $1+z\bb K\langle\langle z\rangle\rangle$, and the composition product defined over $z+z^2\bb K\langle\langle z\rangle\rangle$. We show that noncommutativity of the base algebra in the operator-valued setting allows for the definition of right- and left-products, $\star_r$ and $\star_l$. From this, we motivate the introduction of the S-transform as an inverse map. We also define another product, denoted $\sqdot\, $, which is closely related to the twisted factorisation \eqref{operatorS}, and therefore to the operator-valued boxed convolution product $\boxcon$. Moreover, most of these new products fall under the formalism of post-groups. From this point of view, we describe many links between the products $\star_r$, $\star_l$, $\sqdot\, $, and $\boxcon$, and we revisit the free moment-cumulant relations. Finally, we draw parallels between some particular series, $H_1$ and $H_2$, that we deduce from the post-group formalism, and Lenczewski's s-free additive and multiplicative convolutions, known to play important roles in the operational approach to subordination. %We also use crossed morphisms to picture the interaction between the different products

In Section \ref{sec:lie_structure}, we transfer the post-group structures to the Lie algebra level, taking inspiration from the works~\cite{al-kaabi_ebrahimi-fard_manchon,Bai_Guo_Sheng_Tang_post_groups,KIA2023}. This results in post-Lie algebras defined again over a subset of formal multilinear function series. We show that the post-Lie product originating from the product $\sqdot$ is the sum of the two post-Lie products originating from the products $\star_r$ and $\star_l$, and we elaborate on the reasons of why this is the case. We also describe a pre-Lie product over the same set, defined as a linearisation of composition. In that setting, we identify left- and right-shifting operators satisfying the associative Nijenhuis relation, and that can be considered as generalisations of $f(z)\in \bb K\langle\langle z\rangle\rangle \mapsto zf(z)$.

\medskip

{\bf{Acknowledgements}}: TR would like to thanks the Department of Mathematical Sciences at the Norwegian University of Science and Technology in Trondheim for hospitality. KEF is supported by the Research Council of Norway through project 302831 ''Computational Dynamics and Stochastics on Manifolds" (CODYSMA). He would also like to thank the Centre for Advanced Study (CAS) in Oslo for hospitality and its support during the academic year 2023/2024.

%%%%%%%%%%%%%%%%%%%%%%%%%%%%%%%%%%%%%%%%%%%%%%%%%%%%%%%%%%%%%%%%%%%%%%%%%%%%%%%%%%%%%%%%%%%%%%%%%%%
%%%%%%%%%%%%%%%%%%%%%%%%%%%%%%%%%%%%%%%%%%%%%%%%%%%%%%%%%%%%%%%%%%%%%%%%%%%%%%%%%%%%%%%%%%%%%%%%%%%

\section{Operator-valued probability}
\label{sec:op-valued_proba}

%%%%%%%%%%%%%%%%%%%%%%%%%%%%%%%%%%%%%%%%%%%%%%%%%%%%%%%%%%%%%%%%%%%%%%%%%%%%%%%%%%%%%%%%%%%%%%%%%%%

\subsection{Formal series of multilinear functions}
\label{ssec:series_multilin_functions}

We follow the terminology of Dykema in \cite{dykema2005strans}. Throughout the remainder of the paper, the base field of characteristic zero, over which all algebraic structures are defined, is denoted by $\mathbb{K}$.

\begin{defn}
Let $B$ be a $\mathbb{K}$-algebra with unit $1$. A sequence of multilinear functions $f = (f_n)_{n\ge 0}$, $f_n : B^{\otimes n} \to B$ (with $B^{\otimes 0} = \mathbb{K}$) will be called a formal multilinear function series. The set of these formal multilinear function series will be called $\Mult [[B]]$. 
\begin{enumerate}

\item[i)]
We can multiply such series by defining for $f,g \in \Mult[[B]]$:
\begin{equation}
\label{multipl}
	(f \cdot g)_n(x_1, \ldots, x_n) := f_n(x_1, \ldots, x_n)g_0 + f_0g_n(x_1, \ldots, x_n) 
	+ \sum_{k=1}^{n-1} f_k(x_1, \ldots, x_k) g_{n-k}(x_{k+1}, \ldots, x_n).
\end{equation}
$(\Mult[[B]], \cdot)$ is a monoid, with unit $1 := (\delta_{n,0}1)_{n\ge 0}$. We will often write $fg$ for $f\cdot g \in \Mult[[B]]$. 

\item[ii)]
We can compose such series by defining for $f,g \in \Mult[[B]]$, such that $g_0=0$:
\begin{equation}
\label{compo}
	(f\circ g)_n(x_1, \ldots, x_n) 
	:= \sum_{\substack{n = k_1 + \cdots + k_l \\ k_i\ge 1,\ l \ge 0}} f_l(g_{k_1}(x_1, \ldots, x_{k_1}), \ldots, g_{k_l}(x_{n-k_l+1}, \ldots, x_n)).
\end{equation}
The series $I := (\delta_{n,1}\Id)_{n\ge 0}$ is a unit on the left and on the right for this product.
\end{enumerate}
Note that $f\cdot g = 0$ implies either $f=0$ or $g=0$. This is, however, not the case for composition \eqref{compo}. Regarding expressions  in $\Mult[[B]]$ involving both composition and multiplication, $\circ$ and $\cdot$, the former operation will be given precedence, i.e., $f \cdot g \circ h \cdot k = f\cdot (g\circ h) \cdot k$.
\end{defn}

We define now groups, $(G_B^\inv ,\cdot )$ and  $(G_B^\dif , \circ)$, with respect to multiplication \eqref{multipl} respectively composition \eqref{compo} of elements in $\Mult[[B]]$:
\begin{align*}
	G_B^\inv &:= \{f \in \Mult [[B]] \mid f_0\in B^\times\} \\
	G_B^\dif &:= \{f \in \Mult [[B]] \mid f_0=0, f_1\in \GL(B)\},
\end{align*}
where $B^\times$ is the set of invertible elements of $B$. We note that replacing $f_0\in B^\times$ by $f_0=1$ and $f_1\in \GL(B)$ by $f_1=\Id$ would not change much. For $f \in G_B^\inv$, we will denote by $f^{-1}$ the multiplicative inverse of $f$. The compositional inverse of $f \in G_B^\dif$ will be denoted by $f^{\circ-1}$. 

The set $G_B^\inv $ can be mapped to a subset of $G_B^\dif$ by multiplication on the left or on the right by the composition unit $I$. It gives rise to two more sets:
\begin{align}
\label{Ishiftset}
	G_B^{I,l}:= I \cdot G_B^\inv \\ 
	G_B^{I,r}:= G_B^\inv \cdot I. 
\end{align}
Alternatively,  
\begin{align*}
	G_B^{I,l} := \{f \in G_B^\dif \mid \forall n, f_n(x_1, x_2, \ldots, x_{n-1}, x_n) = x_1 f_n(1, x_2, \ldots, x_{n-1}, x_n)\} \\ 
	G_B^{I,r} := \{f \in G_B^\dif \mid \forall n, f_n(x_1, x_2, \ldots, x_{n-1}, x_n) = f_n(x_1, x_2, \ldots, x_{n-1}, 1) x_n\}.
\end{align*}
Note that in \cite{part1} the set $G_B^{I,l}$ was denoted by $G_B^{I}$, whereas the set $G_B^{I,r}$ was not explicitly defined.

\begin{lem} \cite{dykema2005strans}
\label{lem:rightaction}
Composition and multiplication, defined in \eqref{compo} respectively \eqref{multipl}, are both associative. Composition is noncommutative, and multiplication is noncommutative as soon as $B$ is noncommutative. Moreover, composition is right-distributive over multiplication, i.e., for any $f,g,h \in \mathrm{Mult}[[B]]$, $h_0=0$, we have that
\begin{equation}
\label{rightaction}
	(f \cdot g)\circ h = (f\circ h) \cdot (g\circ h).
\end{equation}
\end{lem}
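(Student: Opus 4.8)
The plan is to treat the four assertions separately, each reducing to a bookkeeping statement about how the index sets in \eqref{multipl} and \eqref{compo} decompose. Throughout I would first rewrite multiplication in the uniform form
\[
(f\cdot g)_n(x_1,\dots,x_n)=\sum_{k=0}^n f_k(x_1,\dots,x_k)\,g_{n-k}(x_{k+1},\dots,x_n),
\]
absorbing the boundary terms $f_n g_0$ and $f_0 g_n$ (with $f_0,g_0\in B$) into the full range $0\le k\le n$. Associativity of multiplication is then routine: expanding both $((f\cdot g)\cdot h)_n$ and $(f\cdot(g\cdot h))_n$ shows each equals the triple sum $\sum_{i+j+k=n} f_i(x_1,\dots,x_i)\,g_j(x_{i+1},\dots,x_{i+j})\,h_k(x_{i+j+1},\dots,x_n)$, the only nontrivial input being associativity of the product of $B$, which lets me drop the parentheses among the three $B$-valued factors.

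The main work is associativity of composition, where I assume the constant terms that must vanish for all the nested compositions to be defined (so $g_0=h_0=0$). The key observation is that expanding $((f\circ g)\circ h)_n$ yields a double sum indexed by a composition $n=p_1+\cdots+p_m$ (the $h$-blocks) together with a composition $m=k_1+\cdots+k_l$ grouping those blocks, whereas expanding $(f\circ(g\circ h))_n$ yields a double sum indexed by a composition $n=q_1+\cdots+q_l$ (the $g$-level super-blocks) together with, inside each super-block, a further composition into $h$-blocks. Both index sets are in canonical bijection with the set of two-level interval decompositions of $\{1,\dots,n\}$: cut into consecutive $h$-intervals, then group these into consecutive super-intervals, with $l$ the number of super-intervals. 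Under this bijection the corresponding $B$-valued summands coincide term by term, since composition is nothing but unambiguous nested evaluation of the $f_l$, $g_{k_i}$, $h_{p_j}$ on consecutive argument blocks. Checking that this bijection is compatible with the distribution of the arguments $x_1,\dots,x_n$ is the only delicate point; I expect it to be the principal obstacle, but it is purely combinatorial.

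For the noncommutativity claims I would exhibit explicit examples. Taking $B=\mathbb{K}$ and scalar series shows composition is already noncommutative in the classical case, e.g. $(z^2)\circ(z+z^2)=z^2+2z^3+z^4\ne z^2+z^4=(z+z^2)\circ(z^2)$, both compositions being defined since the inner series has vanishing constant term. For multiplication, when $B$ is noncommutative I pick $a,b\in B$ with $ab\ne ba$ and take the constant series $f=(a\,\delta_{n,0})_{n\ge 0}$ and $g=(b\,\delta_{n,0})_{n\ge 0}$; then $(f\cdot g)_0=ab\ne ba=(g\cdot f)_0$.

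Finally, for right-distributivity \eqref{rightaction} (with $h_0=0$) I would expand the left-hand side, using the uniform form above for $(f\cdot g)_l$, as
\[
((f\cdot g)\circ h)_n=\sum_{n=k_1+\cdots+k_l}\ \sum_{j=0}^{l} f_j\big(h_{k_1}(\dots),\dots,h_{k_j}(\dots)\big)\,g_{l-j}\big(h_{k_{j+1}}(\dots),\dots,h_{k_l}(\dots)\big),
\]
and the right-hand side as $((f\circ h)\cdot(g\circ h))_n=\sum_{p=0}^{n}(f\circ h)_p(x_1,\dots,x_p)\,(g\circ h)_{n-p}(x_{p+1},\dots,x_n)$. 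Matching is then immediate on setting $p=k_1+\cdots+k_j$: splitting the composition $(k_1,\dots,k_l)$ at position $j$ recovers exactly the pair of inner compositions of $p$ and of $n-p$ appearing on the right, and the boundary cases $j=0$ and $j=l$ correspond to $p=0$ and $p=n$, using $(f\circ h)_0=f_0$. I expect no genuine difficulty here once the index-splitting is set up carefully.
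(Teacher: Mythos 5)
Your proposal is correct; the paper gives no proof of this lemma, deferring entirely to Dykema \cite{dykema2005strans}, and your direct index-bookkeeping verification (the uniform rewriting of \eqref{multipl}, the two-level interval-decomposition bijection for associativity of \eqref{compo}, the splitting of a composition of $n$ at an intermediate position for \eqref{rightaction}, and the explicit counterexamples for the noncommutativity claims) is exactly the standard argument carried out there. The one step worth making explicit when writing it up is that expanding $f_l\bigl((g\circ h)_{q_1}(\dots),\dots,(g\circ h)_{q_l}(\dots)\bigr)$ over the inner compositions uses the multilinearity of $f_l$ in each slot --- this is the only algebraic input beyond associativity of $B$, and it is what lets your two-level bijection match summands term by term.
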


\begin{proof}
See Dykema \cite{dykema2005strans} for details. 
\end{proof}

\begin{lem}
\label{lem:G_B^I_group}
The sets $G_B^{I,l}$ and $G_B^{I,r}$ form groups with respect to composition \eqref{compo}.
\end{lem}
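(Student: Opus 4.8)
The plan is to realise $G_B^{I,l}$ and $G_B^{I,r}$ as subgroups of the composition group $(G_B^\dif,\circ)$. The \emph{alternative} (factoring) descriptions given above already present both sets as subsets of $G_B^\dif$, so, since $(G_B^\dif,\circ)$ is a group, it will suffice to verify for each that it contains the composition unit $I$, is closed under $\circ$, and is closed under the compositional inverse $f\mapsto f^{\circ-1}$. I would carry this out for $G_B^{I,l}$ and obtain $G_B^{I,r}$ by the symmetric argument, exchanging the roles of the first and last arguments and of left and right multiplication throughout the composition formula \eqref{compo}. Throughout, I would call $f\in G_B^\dif$ \emph{left-factoring} if $f_n(x_1,\dots,x_n)=x_1\,f_n(1,x_2,\dots,x_n)$ for all $n$, so that $G_B^{I,l}$ is exactly the set of left-factoring series.

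The unit and closure under composition are the routine steps. The series $I$ is left-factoring since $I_1(x_1)=x_1$ and $I_n=0$ for $n\ge2$. For closure, I would take left-factoring $f,g$ and expand $(f\circ g)_n$ via \eqref{compo}: in each summand $f_l(g_{k_1}(x_1,\dots,x_{k_1}),\dots)$ the first entry $g_{k_1}(x_1,\dots,x_{k_1})$ factors $x_1$ on the left, while $f_l$ factors its first slot, so every summand, hence the whole sum, factors $x_1$ on the left, giving $f\circ g\in G_B^{I,l}$.

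The main obstacle is closure under compositional inverse, which I would establish by induction on the order $n$. Writing $g:=f^{\circ-1}\in G_B^\dif$ for left-factoring $f$, the base case uses that $f_1$ is right multiplication by $c:=f_1(1)$, which is a unit because $f_1\in\GL(B)$; hence $g_1$ is right multiplication by $c^{-1}$ and is left-factoring. For the inductive step I would read off the order-$n$ part of the identity $f\circ g=I$: in \eqref{compo} the unique summand involving $g_n$ is the $l=1$ term $f_1(g_n)=g_n(x_1,\dots,x_n)\,c$, while every summand with $l\ge2$ involves only components $g_{k_i}$ with $k_i\le n-1$ (left-factoring by hypothesis) composed into $f_l$ (which factors its first slot), and so factors $x_1$ on the left. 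Since $(f\circ g)_n=0$ for $n\ge2$, solving the resulting identity for $g_n$ and using that right multiplication by $c^{-1}$ preserves a left factor of $x_1$ shows that $g_n$ is left-factoring, i.e.\ $g\in G_B^{I,l}$. The delicate point to get right is exactly this bookkeeping — that $g_n$ enters only through the $l=1$ term, and that all remaining terms inherit the left factor $x_1$ from the induction hypothesis — after which the mirrored scheme settles $G_B^{I,r}$.
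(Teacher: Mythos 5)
Your argument is correct and complete. Note, however, that the paper does not actually prove this lemma: it simply cites \cite{part1} for $G_B^{I,l}$ and asserts that $G_B^{I,r}$ is similar, so there is no in-text proof to compare against. On its own merits, your route --- treating $G_B^{I,l}$ as a subset of the group $(G_B^\dif,\circ)$ via the left-factoring characterisation and checking unit, closure under $\circ$, and closure under $f\mapsto f^{\circ-1}$ by induction on the degree --- is the natural direct verification, and the key bookkeeping point you flag (that $g_n$ appears only in the $l=1$ summand $f_1(g_n)=g_n\,c$ of \eqref{compo}, while every $l\ge 2$ summand involves only $g_{k_i}$ with $k_i\le n-1$ and hence inherits the left factor $x_1$) is exactly right. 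Two small points deserve a line each if this were written out in full. First, you silently use the equivalence between the defining description $G_B^{I,l}=I\cdot G_B^\inv$ and the left-factoring description; the paper presents both as descriptions of the same set, so this is legitimate, but strictly speaking one of the two must be taken as the definition and the other verified (the verification is immediate: $(IF)_n(x_1,\dots,x_n)=x_1F_{n-1}(x_2,\dots,x_n)$, and conversely $F_{n-1}(x_2,\dots,x_n):=f_n(1,x_2,\dots,x_n)$ recovers $F$ with $F_0=f_1(1)$). Second, the claim that $c=f_1(1)$ is a unit because $f_1\in\GL(B)$ needs a two-line argument: surjectivity of $x\mapsto xc$ gives $x_0$ with $x_0c=1$, and then $(cx_0-1)c=0$ together with injectivity gives $cx_0=1$, so $c$ is two-sided invertible. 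Neither point is a gap in the logic, only in the write-up.
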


\begin{proof}
See \cite{part1} for the prove regarding $G_B^{I,l}$. The case of $G_B^{I,r}$ follows a similar argument.
\end{proof}

\begin{defn} (S-transform)
\label{def:S_transform}
Let $F \in G_B^\inv$. Define its left respectively right S-transforms by
\begin{align*}
	I \cdot S^l(F) &:= (IF)^{\circ-1} \\ 
	S^r(F) \cdot I &:= (FI)^{\circ-1}.
\end{align*}
\end{defn}

\begin{lem}
\label{lem:fixed_point_S_f}
Let $F \in G_B^\mathrm{inv}$. Its left and right S-transforms are the unique solutions to the fixed point equations:
\begin{align}
\label{eq:fixed_point_S_f}
\begin{aligned}
	S^l(F) = F^{-1} \circ (I \cdot S^l(F)) \\ 
	S^r(F) = F^{-1} \circ (S^r(F) \cdot I).
\end{aligned}
\end{align}
\end{lem}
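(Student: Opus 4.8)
The plan is to reduce each fixed point equation to the defining relation of Definition~\ref{def:S_transform} by converting the compositional-inverse condition into a purely multiplicative one. The two tools that make this possible are right-distributivity of composition over multiplication (Lemma~\ref{lem:rightaction}) and the group structure of $(G_B^\inv,\cdot)$. I treat the left S-transform in detail; the right case is entirely symmetric, with right multiplication by $I$ replacing left multiplication by $I$.

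First I would set $A := I \cdot S^l(F)$, so that by definition $A = (IF)^{\circ-1}$, that is $(I\cdot F)\circ A = I$, with $A \in G_B^\dif$ and hence $A_0 = 0$. Applying the right-distributivity identity \eqref{rightaction} to $(I\cdot F)\circ A$ and using that $I$ is the composition unit ($I\circ A = A$) turns this into the multiplicative relation $A\cdot(F\circ A) = I$. Next, substituting $A = I\cdot S^l(F)$ and using associativity of multiplication \eqref{multipl}, I rewrite the left-hand side as $I\cdot\bigl(S^l(F)\cdot(F\circ A)\bigr)$. Since $I = I\cdot 1$ and left multiplication by $I$ is injective on $\Mult[[B]]$ (one recovers $h$ from $I\cdot h$ by evaluating the first argument at $1$), this forces $S^l(F)\cdot(F\circ A) = 1$, the multiplicative unit. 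Because $(F\circ A)_0 = F_0\in B^\times$, we have $F\circ A\in G_B^\inv$, so this exhibits $S^l(F)$ as a left inverse of $F\circ A$ in the group $(G_B^\inv,\cdot)$.

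To identify this inverse, I would apply right-distributivity a second time to obtain $(F\circ A)\cdot(F^{-1}\circ A) = (F\cdot F^{-1})\circ A = 1\circ A = 1$, so that $F^{-1}\circ A$ is a right inverse of $F\circ A$. Uniqueness of inverses in $(G_B^\inv,\cdot)$ then yields $S^l(F) = F^{-1}\circ A = F^{-1}\circ(I\cdot S^l(F))$, which is exactly the claimed fixed point equation. For uniqueness of the solution I would note that $T = F^{-1}\circ(I\cdot T)$ is a triangular recursion: since $(I\cdot T)_n(x_1,\dots,x_n) = x_1\,T_{n-1}(x_2,\dots,x_n)$, the degree-$n$ component of $F^{-1}\circ(I\cdot T)$ depends only on $T_0,\dots,T_{n-1}$, with $T_0 = (F^{-1})_0 = F_0^{-1}$ fixed; thus every component is determined inductively, and the solution is unique.

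The main obstacle is conceptual rather than computational: multiplicative and compositional inverses interact here, and crucially only \emph{right}-distributivity of composition over multiplication is available. The whole argument hinges on recognising that the compositional relation $(IF)\circ A = I$ can be transported, via \eqref{rightaction} and the unit property $I\circ A = A$, into a statement about multiplicative inverses in $G_B^\inv$, after which the identity $(F\circ A)\cdot(F^{-1}\circ A)=1$ pins down the answer. Care must also be taken with the fact that $A$ itself is not multiplicatively invertible (as $A_0 = 0$), which is why one works with $S^l(F)$ and the injectivity of $I\cdot(-)$ rather than attempting to cancel $A$ directly.
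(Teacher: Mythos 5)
Your proof is correct. The paper itself does not prove this lemma but defers to the companion paper \cite{part1}; your argument --- using right-distributivity \eqref{rightaction} to convert $(IF)\circ A = I$ into the multiplicative identity $A\cdot(F\circ A)=I$, cancelling the left factor $I$ by injectivity of $\lambda$, identifying $S^l(F)$ with $F^{-1}\circ A$ via the two-sided inverse argument, and establishing uniqueness by the triangular recursion $T_n \mapsto$ (function of $T_0,\dots,T_{n-1}$) --- is exactly the standard route and is complete, including the correct handling of the fact that $A_0=0$ prevents cancelling $A$ directly.
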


\begin{proof}
See \cite{part1}.
\end{proof}

We note that the foregoing lemma also provides a formula for the product $S^l(F)^{-1} \cdot I \cdot S^l(F)$ (which will appear later).

\begin{coroll}
\label{cor:other_expr_S_f-1IS_f}
Let $F \in G_B^\mathrm{inv}$. Then 
$$
	S^l(F)^{-1} \cdot I \cdot S^l(F) = (FI)\circ (IF)^{\circ -1}.
$$
\end{coroll}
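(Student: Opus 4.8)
The plan is to rewrite the right-hand side using the definition $I\cdot S^l(F)=(IF)^{\circ-1}$ from Definition~\ref{def:S_transform}, and then dismantle it with the two structural facts already available: the right-distributivity of composition over multiplication (Lemma~\ref{lem:rightaction}) and the fixed-point characterisation of the S-transform (Lemma~\ref{lem:fixed_point_S_f}). Writing $s:=S^l(F)$ and $h:=I\cdot s=(IF)^{\circ-1}\in G_B^\dif$, the target identity reads $s^{-1}\cdot I\cdot s=(FI)\circ h$. Since $FI=F\cdot I$ is a multiplicative product and $h_0=0$, equation~\eqref{rightaction} applies and gives
\[
	(FI)\circ h=(F\circ h)\cdot(I\circ h).
\]
Because $I$ is a two-sided unit for composition, $I\circ h=h=I\cdot s$, so the second factor is already in the desired form; what remains is to identify $F\circ h$ with $s^{-1}$.

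The key step, and the one I expect to be the crux, is to show that composing on the right with a fixed $h$ (with $h_0=0$) turns multiplicative inverses into multiplicative inverses. Applying~\eqref{rightaction} to the trivial product $F^{-1}\cdot F=1$ yields
\[
	(F^{-1}\circ h)\cdot(F\circ h)=1\circ h=1,
\]
where the last equality holds because the multiplicative unit $1=(\delta_{n,0}1)_{n\ge0}$ is fixed by composition on the left whenever $h_0=0$. Since $(F^{-1}\circ h)_0=(F_0)^{-1}\in B^\times$, the element $F^{-1}\circ h$ lies in the group $G_B^\inv$, so the right inverse just produced is its genuine two-sided inverse; hence $F\circ h=(F^{-1}\circ h)^{-1}$. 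Now I invoke the left fixed-point equation $s=F^{-1}\circ(I\cdot s)$ of Lemma~\ref{lem:fixed_point_S_f}, i.e.\ $F^{-1}\circ h=s$, to conclude $F\circ h=s^{-1}$.

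Combining the two displays gives $(FI)\circ(IF)^{\circ-1}=(F\circ h)\cdot(I\cdot s)=s^{-1}\cdot I\cdot s$, which is precisely $S^l(F)^{-1}\cdot I\cdot S^l(F)$. The only genuinely delicate points are the bookkeeping with the two distinct units $I$ (for composition) and $1$ (for multiplication), and verifying the hypotheses of~\eqref{rightaction}, namely $h_0=0$; both are immediate from $h=(IF)^{\circ-1}\in G_B^\dif$. Everything else reduces to the interplay between the fixed-point equation and the right-distributivity of composition over multiplication.
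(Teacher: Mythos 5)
Your proof is correct and follows exactly the route the paper intends: the corollary is stated as an immediate consequence of Lemma \ref{lem:fixed_point_S_f} (the paper gives no explicit proof), and your argument supplies the missing details by combining the fixed-point equation $S^l(F)=F^{-1}\circ(I\cdot S^l(F))$ with the right-distributivity \eqref{rightaction} and the observations $I\circ h=h$, $1\circ h=1$. All the intermediate verifications (in particular that $F\circ h=(F^{-1}\circ h)^{-1}=S^l(F)^{-1}$) check out.
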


%%%%%%%%%%%%%%%%%%%%%%%%%%%%%%%%%%%%%%%%%%%%%%%%%%%%%%%%%%%%%%%%%%%%%%%%%%%%%%%%%%%%%%%%%%%%%%%%%%%

\subsection{Operator-valued probability space}
\label{ssec:ovps}

Suppose $B$ is a unital algebra over the field $\mathbb{K}$. We recall some definitions about operator-valued free probability. See \cite{MingoSpeicher2017} for more details and background.

\begin{defn}\cite{MingoSpeicher2017}
\label{def:op_valued_free_proba}
\begin{enumerate}
	\item A \textit{unital $B$-algebra} is an unital $\mathbb{K}$-algebra $\mathcal A$, that is a bi-module over $B$ such that for all $a_1,a_2\in \mathcal A$ and $b_1, b_2\in B$, 
	\begin{align*}
		(b_1 \cdot a_1) \cdot b_2 &= b_1 \cdot (a_1 \cdot b_2)\\
		b_1 \cdot (a_1 \cdot a_2) &= (b_1 \cdot a_1) \cdot a_2 \\
		(a_1 \cdot a_2) \cdot b_1 &= a_1 \cdot (a_2 \cdot b_1) \\
		(a_1\cdot b_1)\cdot a_2 &= a_1 \cdot (b_1 \cdot a_2).
	\end{align*}
	By a slight abuse of notation, all products and actions are denoted by the same symbol $\cdot$. In other words, we ask that any product $x_1x_2x_3 = x_1\cdot x_2 \cdot x_3$ is uniquely defined, for $x_1, x_2, x_3\in B \cup \mathcal A$. Note that if $B$ is a sub-algebra of $\mathcal A$, then $\mathcal A$ is automatically a $B$-algebra.
	
	\item A \textit{$B$-valued probability} space is a pair $(\mathcal A, \bb E)$, where $\mathcal A$ is an unital $B$-algebra, and $\bb E : \mathcal A \to B$ is a unital linear map, $\bb E(1_{\mathcal A}) = 1$, such that for all $x, y \in B$ and for all $a \in \mathcal A$, $\bb E$ is what we call $B$-balanced
	\begin{equation}
	\label{eq:Bbimod}
		\bb E(xay) = x \bb E(a) y.
	\end{equation}

	Elements $a \in \mathcal A$ are called \textit{(noncommutative) random variables}. 
	Their distribution is usually given by their \textit{moment series} $M_a \in \Mult[[B]]$ defined by
	$$
		M_a(x_1, \ldots, x_n) = \bb E(a x_1 a x_2 a \cdots a x_n a).
	$$
	
	\item (Freeness)
	A collection of $B$-subalgebras $(\mathcal A_j)_{j \in J}$ of $\mathcal A$ is said to be \textit{free} if $\bb E(a_1 \cdots a_n) = 0$ when the following conditions hold:
	\begin{itemize}
		\item $n\ge 1$,
		\item $\bb E(a_i) = 0$ for $1\le i\le n$,
		\item $a_i \in \mathcal A_{j_i}$ for $1 \le i \le n$,
		\item $j_i \neq j_{i+1}$ for $1 \le i < n$.
	\end{itemize}
	A collection of elements $(a_j)_{j \in J} \subseteq \mathcal A$ is said to be free if their respective generated $B$-subalgebras are free. In this paper, for simplicity, we shall focus on the case of two free elements $a,b \in \mathcal A$.

	\begin{comment}
	\item (Monotone independence)
	A collection of $B$-subalgebras $(\mathcal A_j)_{j \in J}$ of $\mathcal A$ is said to be \textit{monotone independent} if the index set $J$ is totally ordered by a relation $<$, and if for any $a_1 \in \mathcal A_{\lambda_1}, \ldots, a_n \in \mathcal A_{\lambda_n}$ and $1 \le i \le n$,
	\[
		\bb E(a_1 \cdots a_n) = \bb E(a_1 \cdots a_{i-1} \bb E(a_i) a_{i+1}\cdots a_n)
	\]
	as soon as $\lambda_{i-1} < \lambda_i $ and $\lambda_i > \lambda_{i+1}$ (one of the conditions is ignored if either $i=1$ or $i=n$). A collection of elements $(a_j)_{j \in J} \subseteq \mathcal A$ is said to be monotone independent if their respective generated $B$-subalgebras are free.
	\end{comment}
\end{enumerate}
\end{defn}

Let us fix a $B$-valued probability space $(\mathcal A, \bb E)$.

\begin{defn}
\label{def:op_valued_free_cumulants}
\begin{enumerate}
	\item (Free cumulants)
	For $a \in \mathcal A$, we define its free cumulant series $K_a \in G_B^\mathrm{inv}$
	\[
		I\cdot K_a = (IM_a)\circ (I + IM_aI)^{\circ-1}.
	\]
	Note that althought the definition is different, it coincides with the definition from \cite{part1}.
	\item (S-transform of a random variable)
	Let $a\in \mathcal A$. Define its S-transform $S_a \in G_B^\mathrm{inv}$ by
	\[
		S_a = S^l(K_a).
	\]
\end{enumerate}
\end{defn}

\begin{rmk}
For $a \in \mathcal A$, we can find back the moment series from the free cumulant series, since
\[
	I + IM_aI = ((1+IK_a)^{-1}I)^{\circ-1}.
\]
See Lemma \ref{lem:equiv_MC_relations}, or \cite[Lemma 6.9]{dykema2005strans}, for more details. Therefore the free cumulant series $K_a$ characterises the distribution of $a$, and so does its S-transform $S_a$.
\end{rmk}

The following theorems characterise the distribution of certain combinations of elements in $\mathcal A$.

\begin{thm}\cite{dykema2005strans}
\label{thm:cumulants_free_sum}
Let $a,b \in \mathcal A$ be free random variables. Then
\[
	K_{a+b} = K_a + K_b.
\]
\end{thm}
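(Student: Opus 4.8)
The plan is to reduce $K_{a+b}=K_a+K_b$ to the additivity of the individual operator-valued free cumulants, and to obtain the latter from the vanishing of mixed cumulants, which is the combinatorial signature of freeness.

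First I would make explicit the dictionary between the cumulant series $K_a \in G_B^\inv$ and the sequence of operator-valued free cumulants. Writing $(K_a)_{n-1} = \kappa_n^a$, where $\kappa_n^a : B^{\otimes(n-1)} \to B$ arises from the $B$-multilinear cumulant $\kappa_n$ via $\kappa_n^a[x_1,\dots,x_{n-1}] = \kappa_n(a, x_1 a, \dots, x_{n-1}a)$, the defining relation $I\cdot K_a = (IM_a)\circ(I+IM_aI)^{\circ-1}$ — equivalently its inverted form $I+IM_aI = ((1+IK_a)^{-1}I)^{\circ-1}$ recorded in the Remark — unpacks, degree by degree through \eqref{compo}, into the operator-valued moment--cumulant relations
$$
\mathbb{E}(a x_1 a \cdots a x_{n-1} a) = \sum_{\pi \in \mathrm{NC}(n)} \kappa_\pi^a[x_1, \dots, x_{n-1}],
$$
where $\kappa_\pi^a$ is the nested evaluation of cumulants along the blocks of $\pi$, with the arguments $x_i$ placed according to the planar nesting. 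Thus the components of $K_a$ are exactly the $\kappa_n^a$, and the claim is equivalent to $\kappa_n^{a+b} = \kappa_n^a + \kappa_n^b$ for all $n$.

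The single structural input is then freeness, used in the form of the vanishing of mixed cumulants: since $a$ and $b$ are free, $\kappa_n(c_1, \dots, c_n) = 0$ whenever the $c_i \in \{a,b\}$ (with interspersed $B$-arguments) are not all equal. In the operator-valued setting this is due to Speicher and Dykema \cite{dykema2005strans, MingoSpeicher2017, nica_speicher_book}; I would either cite it or derive it from the moment--cumulant relation together with Definition \ref{def:op_valued_free_proba}(3). With it, the conclusion is immediate by multilinearity: each of the $n$ slots of $\kappa_n^{a+b}$ carries one copy of $a+b$, so expanding gives $\sum_{\varepsilon \in \{a,b\}^n} \kappa_n(\varepsilon_1, x_1\varepsilon_2, \dots, x_{n-1}\varepsilon_n)$; every non-constant $\varepsilon$ contributes $0$, leaving only the all-$a$ and all-$b$ terms, whence $\kappa_n^{a+b} = \kappa_n^a + \kappa_n^b$. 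Repackaging over $n$ yields $K_{a+b} = K_a + K_b$.

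The main obstacle is the first step. The series-level definition of $K_a$ hides the noncrossing-partition combinatorics inside the composition product, and in the noncommutative base algebra $B$ the positions of the arguments $x_i$ and the order of the nested products must be matched exactly against the planar structure of $\pi$; getting this dictionary right (rather than merely up to the scalar specialisation) is where the real work lies. Once it is established, the freeness input and the multilinearity argument are formal. Alternatively, one may simply invoke Dykema's computation in \cite{dykema2005strans}, to which the statement is attributed.
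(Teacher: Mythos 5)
The paper does not prove Theorem \ref{thm:cumulants_free_sum} at all: it is stated as a citation to Dykema \cite{dykema2005strans}, so there is no internal argument to compare yours against. Your route --- identify the components of $K_a$ with the operator-valued free cumulants $\kappa_n^a$, invoke the vanishing of mixed cumulants as the combinatorial characterisation of freeness with amalgamation, and conclude by multilinearity that only the all-$a$ and all-$b$ terms survive in the expansion of $\kappa_n^{a+b}$ --- is the standard combinatorial proof and is sound. You have correctly isolated where the substance lies: neither of the two imported facts is established in this paper. The equivalence of the functional-equation definition $I\cdot K_a = (IM_a)\circ(I+IM_aI)^{\circ-1}$ with the noncrossing-partition moment--cumulant formula is not among the reformulations in Lemma \ref{lem:equiv_MC_relations} (which stay at the level of functional equations in $\Mult[[B]]$); it is the content of Dykema's comparison of his transforms with Speicher's combinatorial cumulants, or equivalently of the planar-binary-tree dictionary in \cite{part1}. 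Likewise the vanishing of mixed operator-valued cumulants for variables free over $B$ is a theorem of Speicher (see \cite{MingoSpeicher2017}), not a formal consequence of Definition \ref{def:op_valued_free_proba}(3) that falls out in a line. Provided both are cited rather than rederived, your argument is complete and arguably more self-contained in spirit than the paper's bare citation; one small point of care is that in the expansion $\sum_{\varepsilon\in\{a,b\}^n}\kappa_n(\varepsilon_1, x_1\varepsilon_2,\dots,x_{n-1}\varepsilon_n)$ the interspersed $B$-arguments must be absorbed into the neighbouring slots before the mixed-cumulant vanishing applies, which is legitimate because the $B$-subalgebras generated by $a$ and $b$ are what is assumed free.
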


\begin{thm} \cite{dykema2005strans,T-transf,speicher_short_proof}
\label{thm:S-transf_twisted_factorisation}
Let $a,b\in \mathcal A$ be free random variables, such that both $\bb E(a)$ and $\bb E(b)$ are invertible in $B$. Then
\begin{equation}
\label{eq:S-transf_twisted_factorisation}
	S_{ab} = S_b \cdot S_a \circ (S_b^{-1} \cdot I \cdot S_b).
\end{equation}
\end{thm}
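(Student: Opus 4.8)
The plan is to verify that the right-hand side of \eqref{eq:S-transf_twisted_factorisation} satisfies the relation that \emph{defines} $S_{ab}=S^l(K_{ab})$, namely $I\cdot S_{ab}=(IK_{ab})^{\circ-1}$ (Definition \ref{def:S_transform}), and then to conclude by injectivity of the left $I$-shift. Write $G:=S_b\cdot S_a\circ(S_b^{-1}\cdot I\cdot S_b)$ for the right-hand side. One first checks $G\in G_B^\inv$: indeed $S_b^{-1}\cdot I\cdot S_b\in G_B^\dif$ (it is a composition of two elements of $G_B^\dif$ by Corollary \ref{cor:other_expr_S_f-1IS_f}), so $S_a\circ(S_b^{-1}\cdot I\cdot S_b)\in G_B^\inv$ and the left product with $S_b$ keeps it in $G_B^\inv$. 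Since $(IF)_n(x_1,\dots,x_n)=x_1F_{n-1}(x_2,\dots,x_n)$, the map $F\mapsto I\cdot F$ is injective on $G_B^\inv$; hence it suffices to prove $I\cdot G=(IK_{ab})^{\circ-1}=I\cdot S_{ab}$ (uniqueness being guaranteed by Lemma \ref{lem:fixed_point_S_f}).

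The first half of the argument is a purely algebraic simplification of $I\cdot G$. Corollary \ref{cor:other_expr_S_f-1IS_f} gives $S_b^{-1}\cdot I\cdot S_b=(K_bI)\circ(IK_b)^{\circ-1}$, so by associativity of composition $S_a\circ(S_b^{-1}\cdot I\cdot S_b)=(S_a\circ K_bI)\circ(IK_b)^{\circ-1}$. Multiplying on the left by $I\cdot S_b=(IK_b)^{\circ-1}=I\circ(IK_b)^{\circ-1}$ and reading right-distributivity (Lemma \ref{lem:rightaction}) backwards with $h=(IK_b)^{\circ-1}\in G_B^\dif$, I obtain
\[
  I\cdot G=\big[I\circ(IK_b)^{\circ-1}\big]\cdot\big[(S_a\circ K_bI)\circ(IK_b)^{\circ-1}\big]=\big[I\cdot(S_a\circ K_bI)\big]\circ(IK_b)^{\circ-1}.
\]
Taking compositional inverses (using $(A\circ B)^{\circ-1}=B^{\circ-1}\circ A^{\circ-1}$ and the group property of $G_B^{I,l},G_B^{I,r}$ from Lemma \ref{lem:G_B^I_group}), the whole theorem collapses to the single identity
\begin{equation*}
  IK_{ab}=(IK_b)\circ\big[I\cdot(S_a\circ K_bI)\big]^{\circ-1},\tag{$\dagger$}
\end{equation*}
in which $S_a$ is itself determined by $K_a$ through $IS_a=(IK_a)^{\circ-1}$. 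Thus $(\dagger)$ is a statement purely about the cumulant series $K_a$, $K_b$ and $K_{ab}$.

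The content of $(\dagger)$ is the operator-valued free \emph{multiplicative} cumulant relation: for free $a,b$ it expresses $K_{ab}$ in terms of $K_a$ and $K_b$, and is the operator-valued counterpart of the scalar Kreweras-complement formula \eqref{scalarcumulants}, i.e. of the boxed convolution $R_{ab}=R_a\boxcon R_b$. I would supply this as the probabilistic input, either by citing the combinatorial proof via planar binary trees in \cite{part1} or Dykema's argument in \cite{dykema2005strans}, or by re-deriving it from the vanishing of mixed cumulants of free variables. The noncommutativity of $B$ is precisely what prevents the left and right $I$-shifts, $IK_b$ and $K_bI$, from coinciding; bookkeeping the resulting $B$-valued insertions at the Kreweras-complementary blocks is what forces the asymmetric placement of $S_b$, $S_a$ and the conjugating factor $S_b^{-1}\cdot I\cdot S_b$.

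The main obstacle is this probabilistic step, establishing $(\dagger)$ and tracking the insertions correctly; by contrast the passage from $(\dagger)$ to \eqref{eq:S-transf_twisted_factorisation} is routine, using only associativity of $\circ$, right-distributivity (Lemma \ref{lem:rightaction}), Corollary \ref{cor:other_expr_S_f-1IS_f}, and injectivity of the $I$-shift. An alternative organisation, closer to the post-group viewpoint of this paper, would be to define the boxed convolution $\boxcon$ directly on cumulant series, prove $K_{ab}=K_a\boxcon K_b$, and then show that $F\mapsto S^l(F)$ intertwines $\boxcon$ with the twisted product on the right of \eqref{eq:S-transf_twisted_factorisation}; the algebraic half of that intertwining is exactly the computation above.
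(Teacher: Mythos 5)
The paper states Theorem \ref{thm:S-transf_twisted_factorisation} \emph{without proof}: it is imported wholesale from \cite{dykema2005strans,T-transf,speicher_short_proof} (and from the companion paper \cite{part1}), so there is no internal argument to compare against. Your algebraic reduction is correct as far as it goes: using $I S_a=(IK_a)^{\circ-1}$, Corollary \ref{cor:other_expr_S_f-1IS_f}, right-distributivity \eqref{rightaction}, and injectivity of $\lambda(F)=IF$, the identity \eqref{eq:S-transf_twisted_factorisation} is indeed equivalent to your $(\dagger)$; and $(\dagger)$ is precisely the relation $IK_{ab}=(IK_b)\circ(IH_1)$ with $H_1=S^l(K_b\rhd_r S_a)$, i.e.\ the formula $K_{ab}=K_a\boxcon K_b$ unwound through the subordination series that the paper develops later (Proposition \ref{prop:eqs_boxcon_subord}, Lemma \ref{lem:boxcon_H_1_H_2}, Remark \ref{rmk:connection_part_I}) and that \cite{part1} proves combinatorially.

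The genuine gap is at $(\dagger)$ itself. All of the probabilistic content of the theorem --- freeness, the vanishing of mixed cumulants, and the operator-valued Kreweras-type bookkeeping of $B$-valued insertions --- lives in that single identity, and the proposal does not establish it: it only indicates that one \emph{would} cite \cite{part1} or \cite{dykema2005strans}, which are the same sources the paper cites for the full theorem. Since your own computation shows that \eqref{eq:S-transf_twisted_factorisation} and $(\dagger)$ are equivalent, the reduction does not lower the difficulty of the statement; it relocates it. What the reduction does buy is a clean isolation of where the combinatorics must enter, and it anticipates the $H_1$, $H_2$ decomposition of Section \ref{ssec:subordination_other_boxed_conv}. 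To turn this into a complete proof you would need to actually prove $(\dagger)$ --- for instance by the planar-binary-tree argument of \cite{part1}, by Speicher's resolvent argument in \cite{speicher_short_proof}, or directly from the vanishing of mixed free cumulants --- none of which is carried out here.
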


%\begin{comment}
%\begin{thm}\cite{HasebeSaigo2014}
%\label{thm:moments_monotone_sum}
%Let $a,b \in \mathcal A$ be montone independant random variables. Then 
%\[
%	\overline M_{a+b} = \overline M_a \circ (\overline M_b I) \cdot \overline M_b,
%\]
%where $\overline M_a = 1 + I M_a$. Equivalently,
%\[
%	\widetilde M_{a+b} = \widetilde M_b \cdot \widetilde M_a \circ (I \widetilde M_b),
%\]
%where $\widetilde M_a = 1 + M_a I$.
%\end{thm}
%\end{comment}

%%%%%%%%%%%%%%%%%%%%%%%%%%%%%%%%%%%%%%%%%%%%%%%%%%%%%%%%%%%%%%%%%%%%%%%%%%%%%%%%%%%%%%%%%%%%%%%%%%%
%%%%%%%%%%%%%%%%%%%%%%%%%%%%%%%%%%%%%%%%%%%%%%%%%%%%%%%%%%%%%%%%%%%%%%%%%%%%%%%%%%%%%%%%%%%%%%%%%%%

\section{Group and post-group structures on multilinear function series}
\label{sec:group_str}

%%%%%%%%%%%%%%%%%%%%%%%%%%%%%%%%%%%%%%%%%%%%%%%%%%%%%%%%%%%%%%%%%%%%%%%%%%%%%%%%%%%%%%%%%%%%%%%%%%%

\subsection{Groups laws on \texorpdfstring{$G_B^\inv$}{G B inv}}
\label{ssec:group_laws}

Theorem \ref{thm:S-transf_twisted_factorisation} presents a combination of elements in $\Mult[[B]]$, which is not obvious -- in particular, if compared to the case of commutative $B$. Therefore, let us precise this expression and show that the righthand side of \eqref{eq:S-transf_twisted_factorisation} actually carries a group structure, when restricting to elements in $G_B^\inv$. Let us first define two group laws on $G_B^\inv$ that arise naturally from the composition product.

\begin{defn}
\label{def:star_l_star_r}
For $F,G\in G_B^\inv$, define
\begin{align}
\label{eq:starproducts}
\begin{aligned}
	F \star_l G := G \cdot F\circ(IG) \\ 
	F \star_r G := F \circ(GI) \cdot G.
\end{aligned}	
\end{align}
\end{defn}

\begin{rmk}
\begin{enumerate}
\item[i)]
It is interesting to compare the products in Definition \ref{def:star_l_star_r} with the particular shifted substitution product defined on noncommutative multivariate power series in \cite[Definition 2.1]{sigma2023}, which has its origin in the Hopf algebraic formulation of monotone additive convolution \cite{EFP2018}. In the scalar-valued case, i.e., when $B=\mathbb{K}$, the products \eqref{eq:starproducts} reduce to the shifted substitution product on $\mathbb{K}\langle\langle z \rangle\rangle$
\begin{equation}
\label{eq:shiftedsub}
	(f \star g)(z)=g(z)f(zg(z)), 
\end{equation}
for $f,g \in \mathbb{K}\langle\langle z \rangle\rangle$. Note that in reference \cite{sigma2023}, the more general multivariate case is considered. 

\item[ii)]
We remark that the link to operator-valued monotone independence is made explicit by Hasebe and Saigo in \cite{HasebeSaigo2014}. Indeed, the second product in \eqref{eq:starproducts} appears explicitly in \cite[Definition 4.4]{HasebeSaigo2014} and it is shown to be associative.
\end{enumerate}
\end{rmk}

\begin{lem}
\label{lem:star_l_r_group_laws}
Both operations, $\star_l$ and $\star_r$, define group laws on $G_B^\mathrm{inv}$, with unit $1$.
\end{lem}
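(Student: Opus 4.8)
The plan is to prove both statements at once by \emph{transport of structure}, reducing each $\star$-product to the composition group structure on $G_B^{I,l}$ respectively $G_B^{I,r}$ established in Lemma~\ref{lem:G_B^I_group}. Consider the maps $\Phi_l : G_B^\inv \to G_B^{I,l}$, $F \mapsto I\cdot F$, and $\Phi_r : G_B^\inv \to G_B^{I,r}$, $F \mapsto F\cdot I$. By the definition \eqref{Ishiftset} of $G_B^{I,l}$ and $G_B^{I,r}$ these maps are surjective, and I would check injectivity directly from \eqref{multipl}: since $I_0 = 0$ and $I_1 = \Id$, one gets $(I\cdot F)_1(x_1) = x_1 F_0$ and $(I\cdot F)_n(x_1,\ldots,x_n) = x_1 F_{n-1}(x_2,\ldots,x_n)$ for $n\ge 2$, so every $F_m$ is recoverable from $I\cdot F$ (and symmetrically for $F\cdot I$). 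Hence $\Phi_l$ and $\Phi_r$ are bijections.

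The heart of the argument is the pair of intertwining identities
\begin{align*}
	\Phi_l(F \star_l G) &= \Phi_l(F) \circ \Phi_l(G), \\
	\Phi_r(F \star_r G) &= \Phi_r(F) \circ \Phi_r(G),
\end{align*}
both of which follow from the right-distributivity \eqref{rightaction} of composition over multiplication together with the fact that $I$ is a two-sided unit for $\circ$. Explicitly, applying \eqref{rightaction} to $I\cdot F$ composed with $IG$ (note $(IG)_0 = 0$, so the composition is defined) gives $(I\cdot F)\circ(IG) = (I\circ(IG))\cdot(F\circ(IG)) = (IG)\cdot(F\circ(IG)) = I\cdot\bigl(G\cdot F\circ(IG)\bigr) = I\cdot(F\star_l G)$, which is the first identity; the second is obtained in the same way by applying \eqref{rightaction} to $F\cdot I$ composed with $GI$ and using $I\circ(GI)=GI$.

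With these identities in hand the conclusion is immediate. Each $\Phi$ is a bijection onto a group, $(G_B^{I,l},\circ)$ respectively $(G_B^{I,r},\circ)$, and conjugates $\star_l$ respectively $\star_r$ into $\circ$. In particular, closure of $\star_l$ follows because $(IF)\circ(IG)\in G_B^{I,l}$ forces $F\star_l G = \Phi_l^{-1}\bigl((IF)\circ(IG)\bigr)\in G_B^\inv$, and $\star_l$ then coincides with the group law transported from $(G_B^{I,l},\circ)$ along the bijection $\Phi_l$; associativity and existence of inverses are inherited from the target group. The unit is $\Phi_l^{-1}(I) = 1$ since $I\cdot 1 = I$, confirming that $1$ is the neutral element, and the same argument with $\Phi_r$ settles $\star_r$.

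I expect the main obstacle to be organisational rather than computational: one must match the left/right placement of the unit $I$ to the side on which composition distributes. Because Lemma~\ref{lem:rightaction} provides only \emph{right}-distributivity of $\circ$ over $\cdot$, the factor $I$ must sit on the left for $\star_l$ (yielding the intertwiner $F\mapsto I\cdot F$) and on the right for $\star_r$ (yielding $F\mapsto F\cdot I$); attempting to intertwine $\star_l$ through $F\mapsto F\cdot I$ would require a left-distributivity that is not available. Verifying the bijectivity of $\Phi_l,\Phi_r$ is the only other point needing care, and it reduces to the elementary unpacking of \eqref{multipl} indicated above.
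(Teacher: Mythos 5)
Your proposal is correct and follows essentially the same route as the paper: the paper also transports the composition group structure on $G_B^{I,l}$ and $G_B^{I,r}$ back to $G_B^\inv$ via the bijections $\lambda(F)=IF$ and $\rho(F)=FI$, establishing exactly the intertwining identities $\lambda(F)\circ\lambda(G)=\lambda(F\star_l G)$ and $\rho(F)\circ\rho(G)=\rho(F\star_r G)$ from right-distributivity. Your explicit check of injectivity of $\lambda$ and $\rho$ is a small addition the paper leaves implicit, but the argument is the same.
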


\begin{proof}
Let us consider the bijections
\begin{align}
\label{eq:leftrightmaps}
\begin{aligned}
\begin{array}{rccc}
	\lambda : 	& G_B^\inv 	& \to 		& G_B^{I,l} \\ 
			& F 			& \mapsto & \lambda(F):=IF 
\end{array}
 \qquad\
\begin{array}{rccc}
	\rho : & G_B^\inv 	& \to 		& G_B^{I,r} \\ 
		 & F 			& \mapsto & \rho(F):=FI
\end{array}.
\end{aligned}	
\end{align}

From Lemma \ref{lem:G_B^I_group}, we know that $(G_B^{I,l}, \circ)$ and $(G_B^{I,r},\circ)$ are both groups with unit $I$. Transferring back the structure to $G_B^\mathrm{inv}$, we get the laws $\star_l$ and $\star_r$, that are therefore group laws with unit $1 = \lambda^{-1}(I) = \rho^{-1}(I)$. Indeed, for $F,G \in G_B^\mathrm{inv}$,
\begin{align*}
	\lambda(F) \circ \lambda(G) = (IF) \circ (IG) 
		&= IG \cdot F\circ(IG) = I (F\star_l G) = \lambda(F\star_l G) \\ 
	\rho(F) \circ \rho(G) = (FI) \circ (GI) 
		&= F\circ(GI)\cdot GI = (F\star_r G) I  = \rho(F\star_r G).
\end{align*}
\end{proof}

\begin{rmk}
The groups $(G_B^{I,l}, \circ)$ and $(G_B^{I,r},\circ)$ are naturally in bijection with $G_B^\inv$, via respectively $\lambda$ and $\rho$. The group law $\star_l$ and $\star_r$ are the translation of these groups over the same set $G_B^\inv$, which allows the two groups to interact.
\end{rmk}

Let us now take inspiration from Theorem \ref{thm:S-transf_twisted_factorisation}.

\begin{defn}
\label{def:sqdot1}
For $F,G \in G_B^\inv$, we define the product on $G_B^\inv$
\begin{equation}
\label{eq:squarebox}
	F \sqdot G := G \cdot F \circ (G^{-1} I G).
\end{equation}
\end{defn}

\begin{lem}
\label{lem:sqdot_group_law}
The set $G_B^{\mathrm{inv}}$ forms a group for the product \eqref{eq:squarebox}, with unit $1$.
\end{lem}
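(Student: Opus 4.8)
The plan is to treat $(G_B^\inv,\sqdot)$ first as a monoid and only afterwards produce inverses. Throughout I write $c(G):=G^{-1}IG\in\Mult[[B]]$ for the multiplicative conjugate of the composition unit that appears inside \eqref{eq:squarebox}. The first task is well-definedness. Since $I_0=0$, the constant term of $c(G)=G^{-1}\cdot I\cdot G$ vanishes, while its linear term is the map $x\mapsto G_0^{-1}xG_0$, which lies in $\GL(B)$ because $G_0\in B^\times$; hence $c(G)\in G_B^\dif$ and the composition $F\circ c(G)$ in \eqref{eq:squarebox} is legitimate. Comparing constant terms then gives $(F\sqdot G)_0=G_0F_0\in B^\times$, so indeed $F\sqdot G\in G_B^\inv$. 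The unit is immediate: from $1^{-1}=1$ and $1\cdot I\cdot 1=I$ one gets $F\sqdot 1=F\circ I=F$, while from $1\circ X=1$ for any $X$ with $X_0=0$ one gets $1\sqdot G=G\cdot(1\circ c(G))=G$.

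The heart of the argument is the identity
\[
 c(F\sqdot G)=c(F)\circ c(G),
\]
which says that $c$ intertwines $\sqdot$ with plain composition. To prove it I would first record the auxiliary fact $(F\circ X)^{-1}=F^{-1}\circ X$ for $X\in G_B^\dif$, obtained by applying right-distributivity \eqref{rightaction} to $FF^{-1}=1$ together with $1\circ X=1$. Writing $F\sqdot G=G\cdot(F\circ c(G))$ and using $(AB)^{-1}=B^{-1}A^{-1}$ gives $c(F\sqdot G)=(F\circ c(G))^{-1}\cdot c(G)\cdot(F\circ c(G))$; substituting the auxiliary fact for the first factor, rewriting $c(G)=I\circ c(G)$, and reading right-distributivity \eqref{rightaction} backwards collapses this to $(F^{-1}IF)\circ c(G)=c(F)\circ c(G)$. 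Granting this, associativity is short: expanding both $(F\sqdot G)\sqdot H$ and $F\sqdot(G\sqdot H)$ by right-distributivity and associativity of $\circ$ (Lemma \ref{lem:rightaction}) reduces them to $H\cdot(G\circ c(H))\cdot\big(F\circ(c(G)\circ c(H))\big)$ and $H\cdot(G\circ c(H))\cdot\big(F\circ c(G\sqdot H)\big)$ respectively, and the displayed homomorphism identity makes the two third factors agree.

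With a monoid in hand it remains to produce inverses. I would solve the left-inverse equation $X\sqdot F=1$, i.e.\ $F\cdot(X\circ c(F))=1$, by setting $X:=F^{-1}\circ c(F)^{\circ-1}$, which lies in $G_B^\inv$ since $X_0=F_0^{-1}$; then associativity of $\circ$ gives $X\circ c(F)=F^{-1}\circ(c(F)^{\circ-1}\circ c(F))=F^{-1}\circ I=F^{-1}$, whence $X\sqdot F=FF^{-1}=1$. Because $\sqdot$ is associative with two-sided unit $1$ and every element has a left inverse, the standard monoid argument (a left inverse of a left inverse forces two-sidedness) upgrades $X$ to a genuine $\sqdot$-inverse of $F$, so $(G_B^\inv,\sqdot)$ is a group.

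The main obstacle is the conjugation identity $c(F\sqdot G)=c(F)\circ c(G)$; everything else is bookkeeping with the monoid axioms. Its proof rests entirely on the \emph{one-sided} distributivity \eqref{rightaction} of composition over multiplication — composition distributes only on the right — which is exactly what lets $F\mapsto F^{-1}IF$ convert the non-commutative, non-distributive combination in \eqref{eq:squarebox} into ordinary composition in $G_B^\dif$. I would also flag that $c$ is \emph{not} injective (it annihilates multiplicative factors commuting with $I$), so, unlike the bijections $\lambda,\rho$ used for $\star_l,\star_r$, it cannot serve as a transport-of-structure isomorphism; this is precisely why inverses are verified directly rather than imported from $(G_B^\dif,\circ)$.
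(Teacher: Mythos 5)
Your proof is correct and follows essentially the same route as the paper: your key identity $c(F\sqdot G)=c(F)\circ c(G)$ is exactly the paper's equation \eqref{eq:F-1IF_of_sqdot}, and the unit and inverse computations coincide with those in the paper's proof. The only differences are cosmetic extras on your side (the explicit well-definedness check and the remark upgrading the left inverse to a two-sided one), which the paper leaves implicit.
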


\begin{proof}
The unit: For $F\in G_B^\inv$,
\begin{align*}
	F \sqdot 1 &= 1 \cdot F \circ (1 \cdot I \cdot 1) = F \\
	1 \sqdot F &= F \cdot 1 \circ (F^{-1} I F) = F \cdot 1 = F.
\end{align*}
The inverse of $F$ with respect to the product \eqref{eq:squarebox} is given by 
$$
	F^{\sqdot -1}=F^{-1}\circ (F^{-1} I F)^{\circ-1}.
$$ 
Indeed, since
\begin{align*}
	(F^{-1}\circ (F^{-1} I F)^{\circ-1}) \sqdot F &= F \cdot (F^{-1}\circ (F^{-1} I F)^{\circ-1}) \circ (F^{-1} I F) = F\cdot F^{-1} = 1.
\end{align*}
Finally, we show associativity. First, note that for $F,G,H \in G_B^\inv$ we have
\begin{align}
	(G\sqdot H)^{-1} I (G\sqdot H) &= (H \cdot G \circ (H^{-1} I H))^{-1} \cdot I \cdot H \cdot G \circ (H^{-1} I H) \nonumber\\ 
	&= (G \circ (H^{-1} I H))^{-1} \cdot H^{-1} \cdot I \cdot H \cdot G \circ (H^{-1} I H) \nonumber\\ 
	&= G^{-1} \circ (H^{-1} I H) \cdot H^{-1} \cdot I \cdot H \cdot G \circ (H^{-1} I H) \nonumber\\ 
	&= (G^{-1} I G) \circ (H^{-1} I H), \label{eq:F-1IF_of_sqdot}
\end{align}
and therefore
\begin{align*}
	(F\sqdot G)\sqdot H &= (G \cdot F \circ (G^{-1} I G)) \sqdot H \\ 
	&= H \cdot (G \cdot F \circ (G^{-1} I G)) \circ (H^{-1} I H) \\ 
	&= H \cdot G \circ (H^{-1} I H) \cdot F \circ (G^{-1} I G) \circ (H^{-1} I H) \\ 
	&= (G\sqdot H) \cdot F \circ ((G\sqdot H)^{-1} I (G\sqdot H)) \\ 
	&= F\sqdot (G\sqdot H).
\end{align*}
\end{proof}

\begin{rmk}
The map
\[
	(G_B^\inv, \sqdot) \to (G_B^\dif, \circ), \quad F \mapsto F^{-1}IF,
\]
is a group morphism. Note, however, that it is not necessarily injective, so we cannot use the trick of "transporting back the structure". In fact, if $B$ is commutative (in particular in the scalar-valued case, $B=\mathbb{K}$), then this map is constant equal to $I$, the identity element of $(G_B^\dif, \circ)$.
\end{rmk}

\begin{defn}
\label{def:squareboxopp}
For $F,G\in G_B^\mathrm{inv}$, define also the product
\begin{equation}
\label{eq:squareboxopp}
	F \sqdot' G := F \circ (GIG^{-1}) \cdot G.
\end{equation}
\end{defn}

\begin{prop}
The set $G_B^\mathrm{inv}$ forms a group for the product \eqref{eq:squareboxopp}. Moreover, the map $\sigma : (G_B^\mathrm{inv}, \sqdot) \to (G_B^\mathrm{inv}, \sqdot')$, $\sigma(F):=F^{-1}$ is a group isomorphism.
\end{prop}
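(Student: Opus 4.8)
The plan is to avoid checking the group axioms for $\sqdot'$ by hand, and instead to transport the group structure of $(G_B^\inv, \sqdot)$, already established in Lemma \ref{lem:sqdot_group_law}, along the map $\sigma$. First I would note that $\sigma(F) = F^{-1}$ is a well-defined involution of the \emph{set} $G_B^\inv$: multiplicative inversion preserves $G_B^\inv$ because $(F^{-1})_0 = F_0^{-1} \in B^\times$ whenever $F_0 \in B^\times$, and clearly $\sigma \circ \sigma = \id$, so $\sigma$ is a bijection. By the transport-of-structure principle it then suffices to establish the single intertwining identity
\[
	\sigma(F \sqdot G) = \sigma(F) \sqdot' \sigma(G), \qquad \text{i.e.} \qquad (F \sqdot G)^{-1} = F^{-1} \sqdot' G^{-1},
\]
for all $F, G \in G_B^\inv$. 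Once this holds, $\sqdot'$ coincides with the product obtained by transporting $\sqdot$ through the bijection $\sigma$, so $(G_B^\inv, \sqdot')$ is automatically a group and $\sigma$ is automatically a group isomorphism.

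The core reduces to one algebraic fact: for $f \in G_B^\inv$ and any $h$ with $h_0 = 0$ one has $(f \circ h)^{-1} = f^{-1} \circ h$. This is precisely the identity already used inside the proof of Lemma \ref{lem:sqdot_group_law}, and it follows from right-distributivity of $\circ$ over $\cdot$ (Lemma \ref{lem:rightaction}): applying $(-)\circ h$ to $f \cdot f^{-1} = 1$ gives $(f\circ h)\cdot(f^{-1}\circ h) = 1 \circ h = 1$. With this in hand, and noting that $h := G^{-1}IG$ satisfies $h_0 = 0$, I would expand the left-hand side using $(GH)^{-1} = H^{-1}G^{-1}$,
\[
	(F \sqdot G)^{-1} = \big(G \cdot F\circ(G^{-1}IG)\big)^{-1} = \big(F\circ(G^{-1}IG)\big)^{-1}\cdot G^{-1} = F^{-1}\circ(G^{-1}IG)\cdot G^{-1},
\]
and separately expand the right-hand side directly from Definition \ref{def:squareboxopp}, $F^{-1}\sqdot' G^{-1} = F^{-1}\circ\big(G^{-1}I(G^{-1})^{-1}\big)\cdot G^{-1} = F^{-1}\circ(G^{-1}IG)\cdot G^{-1}$. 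The two expressions coincide, which is the required intertwining identity.

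I do not expect a genuine obstacle here; the transport argument makes the statement almost a formality once the intertwining identity is in place. The only point demanding care is the bookkeeping of factor order, since $B$ is noncommutative and hence $(GH)^{-1} = H^{-1}G^{-1}$ reverses the multiplicative factors, while composition and multiplication interact through the precedence convention and the right-distributivity fixed earlier. The single substantive ingredient is the composition-inverse rule $(f\circ h)^{-1} = f^{-1}\circ h$, so I would state it as a brief preliminary observation (or cite its appearance in the proof of Lemma \ref{lem:sqdot_group_law}) before running the two expansions, so that the final comparison is a direct term-by-term match.
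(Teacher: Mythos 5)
Your proposal is correct and follows essentially the same route as the paper: the paper's proof likewise reduces everything to the single intertwining identity $\sigma(F\sqdot G)=\sigma(F)\sqdot'\sigma(G)$ and verifies it by the same expansion $(G\cdot F\circ(G^{-1}IG))^{-1}=F^{-1}\circ(G^{-1}IG)\cdot G^{-1}$. Your explicit justification of the step $(f\circ h)^{-1}=f^{-1}\circ h$ via right-distributivity is a welcome bit of extra detail that the paper leaves implicit.
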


\begin{proof}
It suffices to verify that for $F,G\in G_B^\mathrm{inv}$, $\sigma(F\sqdot G) = \sigma(F) \sqdot' \sigma(G)$. Indeed,
\begin{align*}
	\sigma(F\sqdot G) 	
		&= (F\sqdot G)^{-1} = (G\cdot F\circ(G^{-1}IG))^{-1} 
		= F^{-1} \circ (G^{-1}IG) \cdot G^{-1} 
		= \sigma(F) \sqdot' \sigma(G).
\end{align*}
\end{proof}

%%%%%%%%%%%%%%%%%%%%%%%%%%%%%%%%%%%%%%%%%%%%%%%%%%%%%%%%%%%%%%%%%%%%%%%%%%%%%%%%%%%%%%%%%%%%%%%%%%%

\subsection{Post-groups on \texorpdfstring{$G_B^\inv$}{G B inv}}
\label{ssec:post-groups}

We will now show that several of the group laws described in the previous section fall under the description of post-groups. The notion of post-group appeared recently in \cite{Bai_Guo_Sheng_Tang_post_groups} (see also \cite{mencattini-quesney2021}, and \cite{al-kaabi_ebrahimi-fard_manchon} for more details). Examples of post-groups can be found in (\cite{guin-oudom2008} and) \cite{Lundetal2015}.

\begin{defn} \cite{Bai_Guo_Sheng_Tang_post_groups}
\label{def:postgroup}
A \textit{post-group} $(G,\cdot,\rhd)$ consists of a group $(G,\cdot)$, endowed with a map (or \textit{post-group action}) $\rhd : G\times G \to G$, such that $G \ni a \mapsto L_b^\rhd(a):=b \rhd a \in G$ is an automorphism for all $b\in G$, and such that for any $a,b,c\in G$:
$$
	(a *_\rhd b) \rhd c = a \rhd (b \rhd c),
$$
where the Grossman--Larson product
\begin{equation}
\label{def:GLprod}
	a *_\rhd b := a \cdot (a \rhd b).
\end{equation}
For a post-group $(G,\cdot,\rhd)$, the Grossman--Larson product \eqref{def:GLprod} defines a new group law on $G$, called the \textit{Grossman--Larson} law. If the group $(G,\cdot)$ is commutative, then $(G,\cdot,\rhd)$ is called \textit{pre-group}.
\end{defn}

\begin{defn} \cite{Bai_Guo_Sheng_Tang_post_groups}
\label{def:postgroupmorphism}
A morphism $\Gamma: G \to H$ between post-groups $(G,\cdot_{\scriptscriptstyle{G}},\rhd_{\scriptscriptstyle{G}})$ and $(H,\cdot_{\scriptscriptstyle{H}},\rhd_{\scriptscriptstyle{H}})$ by definition satisfies for all $g_1,g_2 \in G$
$$
	\Gamma(g_1 \cdot_{\scriptscriptstyle{G}} g_2)=\Gamma(g_1) \cdot_{\scriptscriptstyle{H}} \Gamma(g_2)
	\quad \text{and} \quad
	\Gamma(g_1 \rhd_{\scriptscriptstyle{G}} g_2)=\Gamma(g_1) \rhd_{\scriptscriptstyle{H}} \Gamma(g_2).
$$
\end{defn}

Let us now define four post-group actions.

\begin{defn}
For $F,G\in G_B^\inv$, let
\begin{equation}
\label{def:actions}
\begin{matrix}
	F \rhd_l G :=& \!\! \!\! \!\! G \circ(IF) \\[0.2cm] 
	F \rhd_r G :=& \!\! \!\! \!\! G \circ(FI) 
\end{matrix}
\qquad\ \text{and} \qquad\
\begin{matrix}
	F \rhd G :=& \!\! \!\! \!\! G \circ(F^{-1}IF) \\[0.2cm]
	F \rhd' G :=& \!\! \!\! \!\! G \circ(FIF^{-1})
\end{matrix}.
\end{equation}
\end{defn}

For a product $\bullet$ on $G_B^\mathrm{inv}$, we denote its opposite product by $\overline\bullet=\bullet \circ \tau$, where $\tau(a,b):=(b,a)$. 

\begin{lem}
The group $(G_B^\mathrm{inv},\cdot)$ can be equipped with the following post-group structures:
\begin{align*}
	(G_B^\mathrm{inv}, \cdot, \rhd_l) 
		& \text{ with } \overline\star_l \text{ being its Grossman--Larson law,} \\ 
	(G_B^\mathrm{inv}, \overline\cdot, \rhd_r) 
		& \text{ with } \overline\star_r \text{ being its Grossman--Larson law,}
\end{align*}
and 
\begin{align*}
	(G_B^\mathrm{inv}, \cdot, \rhd) 
		& \text{ with } \overline\sqdot \text{ being its Grossman--Larson law,} \\ 
	(G_B^\mathrm{inv}, \overline\cdot, \rhd') 
		& \text{ with } \overline\sqdot' \text{ being its Grossman--Larson law.}
\end{align*}
\end{lem}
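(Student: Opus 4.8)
The plan is to handle all four cases uniformly, observing that each action has the shape $F \rhd G = G \circ \theta(F)$ for a map $\theta \colon G_B^\inv \to G_B^\dif$, namely $\theta(F)=IF,\ FI,\ F^{-1}IF,\ FIF^{-1}$ for $\rhd_l,\rhd_r,\rhd,\rhd'$ respectively. In each case $\theta(F)\in G_B^\dif$ is compositionally invertible: for $\rhd_l,\rhd_r$ this is immediate from the definitions of $G_B^{I,l},G_B^{I,r}$, while for $\rhd,\rhd'$ it follows from the group-morphism property recorded after Lemma \ref{lem:sqdot_group_law}. To exhibit a post-group structure $(G_B^\inv,\bullet,\rhd)$ I must verify three things: (i) that $L_F^\rhd$ is an automorphism of $(G_B^\inv,\bullet)$; (ii) the weak-associativity axiom $(F *_\rhd G)\rhd H = F\rhd(G\rhd H)$; and (iii) that the induced Grossman--Larson law $*_\rhd$ is the claimed opposite product.

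For (i), I would note that $L_F^\rhd(G)=G\circ\theta(F)$ is a bijection with inverse $G\mapsto G\circ\theta(F)^{\circ-1}$ (using associativity of $\circ$ and that $I$ is its unit), that it preserves $G_B^\inv$ because $(G\circ\theta(F))_0=G_0$, and that it is multiplicative by the right-distributivity of Lemma \ref{lem:rightaction}, equation \eqref{rightaction}, which gives $(G_1\cdot G_2)\circ\theta(F)=(G_1\circ\theta(F))\cdot(G_2\circ\theta(F))$. Since any $\cdot$-automorphism is automatically a $\overline\cdot$-automorphism, this single argument covers both the $\cdot$ and the $\overline\cdot$ cases.

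For (iii) I would compute $*_\rhd$ directly: with $\bullet=\cdot$ one finds $F *_{\rhd_l} G = F\cdot(G\circ(IF)) = G\star_l F = \overline\star_l(F,G)$ and likewise $F *_\rhd G = \overline\sqdot(F,G)$, whereas with $\bullet=\overline\cdot$ one gets $F *_{\rhd_r} G = (G\circ(FI))\cdot F = G\star_r F = \overline\star_r(F,G)$ and $F *_{\rhd'} G = \overline\sqdot'(F,G)$. Axiom (ii) then reduces, by associativity of $\circ$ (unwinding both sides to $H\circ\theta(F *_\rhd G)$ and $H\circ(\theta(G)\circ\theta(F))$), to the single identity $\theta(F *_\rhd G)=\theta(G)\circ\theta(F)$. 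For $\rhd_l$ (resp.\ $\rhd_r$) this is exactly the statement that $\lambda$ (resp.\ $\rho$) is a group isomorphism onto $(G_B^{I,l},\circ)$ (resp.\ $(G_B^{I,r},\circ)$) established inside the proof of Lemma \ref{lem:star_l_r_group_laws}; for $\rhd$ it is precisely equation \eqref{eq:F-1IF_of_sqdot}, i.e.\ that $F\mapsto F^{-1}IF$ is a morphism $(G_B^\inv,\sqdot)\to(G_B^\dif,\circ)$, and for $\rhd'$ its $\sqdot'$-analogue.

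I expect the main obstacle to be bookkeeping rather than genuine difficulty, since the substantive identities have already been proved. The delicate point is the opposite-product convention: the left actions $\rhd_l,\rhd$ pair with $(G_B^\inv,\cdot)$ while the right actions $\rhd_r,\rhd'$ pair with $(G_B^\inv,\overline\cdot)$, precisely because the non-composition factor sits to the left in $\star_l,\sqdot$ but to the right in $\star_r,\sqdot'$. Choosing the wrong side makes $*_\rhd$ fail to agree with the group law $\overline\star$ and in fact breaks axiom (ii). I would therefore verify each of the four Grossman--Larson identifications explicitly, checking in particular that the law is the \emph{opposite} $\overline\star_l$ and not $\star_l$, since an order-of-multiplication slip there would otherwise pass unnoticed.
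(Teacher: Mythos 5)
Your proposal is correct and follows essentially the same route as the paper: right-distributivity (Lemma \ref{lem:rightaction}) gives the automorphism property, and the weak-associativity axiom reduces to the morphism identities $\theta(F *_\rhd G)=\theta(G)\circ\theta(F)$, which the paper verifies by the same inline computations that you instead cite from the proof of Lemma \ref{lem:star_l_r_group_laws} and equation \eqref{eq:F-1IF_of_sqdot}. Your uniform packaging via $\theta$ and your explicit care with the $\cdot$ versus $\overline\cdot$ pairing are sound but do not constitute a different argument.
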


\begin{proof}
For $F \in G_B^\inv$, we have that $IF$, $FI$, $F^{-1}IF$, and $FIF^{-1}$ are in $G_B^\dif$, so all maps $F\rhd_l -$, $F\rhd_r -$, $F\rhd -$, $F\rhd' -$ are invertible. Moreover, they are group morphisms thanks to Lemma \ref{lem:rightaction}, so they are group automorphisms. Now, let us fix $F,G,H \in G_B^\inv$. We have 
\begin{align*}
	F \rhd_l (G\rhd_l H) 
	= H \circ (IG) \circ (IF) 
	= H \circ ((IF)\cdot G\circ(IF)) 
	= H \circ(I \cdot(F \cdot (F\rhd_l G))) 
	= (F\cdot (F\rhd_l G))\rhd_l H,
\end{align*}
so that $(G_B^\inv, \cdot, \rhd_l)$ is a post-group with Grossman--Larson product 
$$
	F*_{\rhd_l} G 
	= F \cdot (F\rhd_l G) 
	= F\cdot G\circ(IF) 
	= G \star_l F 
	= F \overline \star_l G. 
$$
The same goes for the action $\rhd_r$.

Regarding the action $\rhd$, the argument is quite similar. We have
\begin{align*}
	F \rhd (G \rhd H) 
	&= H \circ(G^{-1}IG)\circ(F^{-1}IF) \\ 
	&= H\circ(G^{-1}\circ(F^{-1}IF)\cdot F^{-1}IF \cdot G\circ (F^{-1}IF)) \\ 
	&= H \circ \Big((F \cdot G\circ (F^{-1}IF))^{-1} I (F \cdot G\circ (F^{-1}IF))\Big) \\ 
	&= H \circ \Big((F \cdot F\rhd G)^{-1} I (F \cdot F\rhd G)\Big) \\ 
	&= (F\cdot (F\rhd G)) \rhd H,
\end{align*}
and 
$$
	F\cdot (F\rhd G) = F \cdot G\circ (F^{-1}IF) = G \sqdot F = F ~\overline \sqdot ~G. 
$$
The same goes for the action $\rhd'$.
\end{proof}

\begin{rmk}
\label{rmk:pre-groups}
Recall that a pre-group is a post-group with commutative group product. Therefore, if $B$ is commutative, then  $(G_B^\inv, \cdot)$ is a commutative group and the left- and right actions coincide, that is, $\gg:=\rhd_l=\rhd_r$, which makes  $(G_B^\inv, \cdot, \gg)$ a pre-group with Grossman--Larson product $\overline{\star}:=\overline{\star}_l= \overline{\star}_r$.
\end{rmk}

\begin{rmk}
\label{rmk:post_group_corresp_inverse_group}
Let $(G, \cdot)$ be any group. There is a bijective correspondence between post-group actions over $(G, \cdot)$ and over $(G, \overline \cdot)$. Indeed, if $\rhd_1$ is a post-group action over $(G, \cdot)$, then $\rhd_1'$ defined by $x \rhd_1' y := x^{-1} \rhd_1 y$ is a post-group action over $(G, \overline \cdot)$. In this way, the inverse map is an isomorphism of post-groups between $(G, \cdot, \rhd_1)$ and $(G, \overline\cdot, \rhd_1')$.

\medskip

Following Remark \ref{rmk:post_group_corresp_inverse_group}, we get two more post-groups, $(G, \overline \cdot, \rhd_l')$ and $(G, \cdot, \rhd_r')$, with actions 
\begin{align*}
	F \rhd_l' G &:= G \circ(IF^{-1}) \\ 
	F \rhd_r' G &:= G \circ(F^{-1}I).
\end{align*}
Moreover, the actions $\rhd$ and $\rhd'$ are linked through this correspondence. 
\end{rmk}

The formalism of post-groups provides descriptions for the inverse maps for the Grossman--Larson products $\star_l$, $\star_r$, $\sqdot\,$, and $\sqdot'$.

\begin{prop}
\label{prop:inverse_maps_S}
The following maps $G_B^\mathrm{inv} \to G_B^\mathrm{inv}$ define inverse maps for the specified group laws on $G_B^\mathrm{inv}$.
\[
\begin{array}{rcccl}
	\sigma :		& F & \mapsto & F^{-1} 					& \text{ is the inverse map for } \cdot \\
	S^l :			& F & \mapsto & F^{-1}\circ(IF)^{\circ-1} 		& \text{ is the inverse map for } \star_l \\
	S^r :			& F & \mapsto & F^{-1}\circ(FI)^{\circ-1} 		& \text{ is the inverse map for } \star_r \\
	S^{\sqdot} :	& F & \mapsto & F^{-1}\circ(F^{-1}IF)^{\circ-1} 	& \text{ is the inverse map for } \sqdot \\
	S^{\sqdot'} :	& F & \mapsto & F^{-1}\circ(FIF^{-1})^{\circ-1} 	& \text{ is the inverse map for } \sqdot'.
\end{array}
\]
In particular, all these maps are involutive, i.e.~$S(S(F)) = F$ for $S \in \{\sigma, S^l,S^r, S^{\sqdot}, S^{\sqdot'}\}$ and $F\in G_B^\mathrm{inv}$.
\end{prop}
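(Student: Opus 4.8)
The plan is to exploit that all five operations have already been established as group laws on $G_B^\inv$: multiplication $\cdot$ makes $G_B^\inv$ a group, $\star_l$ and $\star_r$ by Lemma~\ref{lem:star_l_r_group_laws}, $\sqdot$ by Lemma~\ref{lem:sqdot_group_law}, and $\sqdot'$ by the Proposition following Definition~\ref{def:squareboxopp}. Since in a group a left inverse is automatically the unique two-sided inverse, for each candidate map $S$ and its product $\bullet$ I would verify only the single identity $S(F)\bullet F = 1$. For $\sigma(F)=F^{-1}$ with the product $\cdot$ there is nothing to prove beyond the definition of the multiplicative inverse, so the work concentrates on the four compositional maps.

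First I would record that these four share the uniform shape $S(F)=F^{-1}\circ\phi(F)^{\circ-1}$, where $\phi(F)$ equals $IF$, $FI$, $F^{-1}IF$, and $FIF^{-1}$ respectively. Each $\phi(F)$ lies in $G_B^\dif$ --- for $IF,FI$ because $\lambda(F)\in G_B^{I,l}$ and $\rho(F)\in G_B^{I,r}$ are subsets of $G_B^\dif$, and for $F^{-1}IF,FIF^{-1}$ as already recorded before Remark~\ref{rmk:pre-groups} --- so that $\phi(F)^{\circ-1}$ exists in $(G_B^\dif,\circ)$ and $\phi(F)^{\circ-1}\circ\phi(F)=I$. (For $S^l,S^r$ the displayed formulas also coincide with Definition~\ref{def:S_transform} through the fixed-point equation of Lemma~\ref{lem:fixed_point_S_f}.)

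Then the verification is one computation carried out four times. For $\star_l$, using $A\star_l B=B\cdot A\circ(IB)$, associativity of composition, and $g\circ I=g$, I would write
\begin{align*}
	S^l(F)\star_l F
	&= F\cdot S^l(F)\circ(IF)
	= F\cdot\bigl(F^{-1}\circ(IF)^{\circ-1}\bigr)\circ(IF) \\
	&= F\cdot F^{-1}\circ\bigl((IF)^{\circ-1}\circ(IF)\bigr)
	= F\cdot F^{-1}\circ I
	= F\cdot F^{-1} = 1.
\end{align*}
The other three cases are identical after substituting $\phi(F)=FI$, $F^{-1}IF$, $FIF^{-1}$ and the corresponding product formula: in each the inner composition of $S(F)$ with $\phi(F)$ collapses to $F^{-1}$, leaving $F\cdot F^{-1}=1$ for $\star_l,\sqdot$ and $F^{-1}\cdot F=1$ for $\star_r,\sqdot'$. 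Conceptually this is exactly the Grossman--Larson inverse $F^{*-1}=L_F^{-1}(F^{-1})$ of the post-group formalism, since $L_F\colon G\mapsto G\circ\phi(F)$ is invertible with inverse $G\mapsto G\circ\phi(F)^{\circ-1}$.

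Finally, involutivity requires no extra argument: each $S$ has just been identified with the inverse map of a group law, and inverse maps of groups always satisfy $(g^{-1})^{-1}=g$. I do not anticipate a real obstacle here --- the one genuinely delicate point is keeping track of left- versus right-inverses and of the fact that the Grossman--Larson products $\overline\star_l,\overline\star_r,\overline\sqdot,\overline\sqdot'$ are the opposites of $\star_l,\star_r,\sqdot,\sqdot'$ (which, being a group and its opposite, share the same inverse map); the direct one-sided check above avoids this subtlety altogether.
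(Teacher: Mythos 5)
Your proof is correct. The paper's own proof is the three-line abstract argument you relegate to a ``conceptually'' aside: in any post-group $(G,\cdot,\rhd)$ the inverse of $x$ for the Grossman--Larson law is $x\rhd^{-1}x^{-1}$, where $\rhd^{-1}$ denotes the inverse of the automorphism $L_x^\rhd$; specialised to $L_F^\rhd(G)=G\circ\phi(F)$ this yields $F^{-1}\circ\phi(F)^{\circ-1}$ at once, and the opposite-product issue is dismissed exactly as you note, since a group and its opposite share the same inverse map. Your main line instead verifies $S(F)\bullet F=1$ by direct computation in each of the four cases, leaning only on associativity of $\circ$, the fact that $(G_B^\dif,\circ)$ is a group so that $\phi(F)^{\circ-1}\circ\phi(F)=I$, and the standard fact that a one-sided inverse in a group is the inverse. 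Both routes are sound and short; the paper's buys uniformity (one formula covers all four products and makes the involutivity statement a generic group fact, just as in your last paragraph), while yours is self-contained and does not require first matching each product with its post-group action, which is a genuine, if small, simplification in bookkeeping. The computations you display are accurate against Definitions \ref{def:star_l_star_r}, \ref{def:sqdot1} and \ref{def:squareboxopp}.
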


\begin{proof}
In a post-group $(G, \cdot, \rhd)$, the map $y \mapsto L^{\rhd}_x(y)= x\rhd y$ is an automorphism. Let us denote its inverse by $y \mapsto x \rhd^{-1} y$. Then the inverse of $x\in G$ for $*_\rhd$ is given by $x \rhd^{-1} x^{-1}$, where $x^{-1}$ is the inverse of $x$ in $(G, \cdot)$.
\end{proof}

\begin{rmk}
Going back to Lemma \ref{lem:fixed_point_S_f}, we note that \eqref{eq:fixed_point_S_f} can be written
$$
\begin{array}{ll}
	S^l(F)&=S^l(F) \rhd_l \sigma(F)\\[0.2cm]
	S^r(F)&=S^r(F) \rhd_r \sigma(F)
\end{array}
\qquad \text{and} \qquad
\begin{array}{ll}
	S^{\sqdot}(F)&=S^{\sqdot}(F) \rhd \sigma(F)\\[0.2cm]
	S^{\sqdot'}(F)&=S^{\sqdot'}(F) \rhd' \sigma(F) 
\end{array},
$$
relating $S^l$, $S^r$, $S^{\sqdot}$, $S^{\sqdot'}$, and $\sigma$ via the corresponding post-Lie actions. 

Note that this way of defining $S^l$ and $S^r$ coincides with Definition \ref{def:S_transform} of the respective S-transforms. Also note that the S-transform is now considered a map between multilinear function series, instead of a particular series $S_a = S^l(K_a)$. This allows us to apply these maps more generally than just to the R-transform, i.e., series of free cumulants. As such they will be used extensively in the rest of the section. 

We note that this viewpoint is in line with the map $\mathcal{F}$ defined in Nica and Speicher \cite[p.293]{nica_speicher_book} in the scalar-valued setting. Further below, we state the analog of Theorem 18.14 \cite[p.294]{nica_speicher_book} identifying the S-transform as a group isomorphism in the operator-valued setting. 
\end{rmk}

Let us also point at a connection between the Grossman--Larson laws $\star_l$, $\star_r$, $\sqdot\,$, and $\sqdot'$.

\begin{prop}
\label{prop:rhd_from_rhd_l_and_rhd_r}
For $F,G\in G_B^\mathrm{inv}$, we have
\begin{align*}
	F \rhd G &= F \rhd_l (S^l(F) \rhd_r G), \\ 
	F \rhd' G &= F \rhd_r (S^r(F) \rhd_l G).
\end{align*}
Therefore
\begin{align*}
	F \sqdot G &= (S^l(G)\rhd_r F) \star_l G, \\ 
	F \sqdot' G &= (S^r(G)\rhd_l F) \star_r G.
\end{align*}
\end{prop}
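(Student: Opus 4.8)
The plan is to establish the two $\rhd$-identities first, since they live in $G_B^\dif$ under composition and reduce to a single substitution identity, and then to read off the two $\sqdot$-identities formally from the Grossman--Larson formalism recorded in the previous lemma. Throughout I will use the explicit formula $S^l(F)=F^{-1}\circ(IF)^{\circ-1}$ from Proposition \ref{prop:inverse_maps_S}, together with right-distributivity \eqref{rightaction} and the fact that $I$ is a two-sided unit for $\circ$.

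For the first identity, I would unfold the right-hand side using the definitions of the actions: $S^l(F)\rhd_r G = G\circ(S^l(F)\cdot I)$, and hence $F\rhd_l(S^l(F)\rhd_r G) = G\circ\big((S^l(F)\cdot I)\circ(IF)\big)$ after invoking associativity of composition. Since $F\rhd G = G\circ(F^{-1}IF)$ by definition, the whole statement collapses to the single claim that $(S^l(F)\cdot I)\circ(IF)=F^{-1}IF$ in $G_B^\dif$. To prove this claim I would apply right-distributivity \eqref{rightaction} to split $(S^l(F)\cdot I)\circ(IF)$ into $\big(S^l(F)\circ(IF)\big)\cdot\big(I\circ(IF)\big)$; the second factor is just $IF$, while the first simplifies to $F^{-1}$ because $S^l(F)\circ(IF)=F^{-1}\circ(IF)^{\circ-1}\circ(IF)=F^{-1}\circ I=F^{-1}$. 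Associativity of multiplication then gives $F^{-1}\cdot(IF)=F^{-1}IF$, as wanted. The second identity $F\rhd' G=F\rhd_r(S^r(F)\rhd_l G)$ is the mirror image: it reduces in exactly the same way to $(I\cdot S^r(F))\circ(FI)=FIF^{-1}$, which follows from $S^r(F)=F^{-1}\circ(FI)^{\circ-1}$ by the left--right symmetric computation.

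For the ``Therefore'' block, I would multiply the first identity on the left by $F$ with respect to $\cdot$ and recognise both sides as Grossman--Larson products. Indeed $F\cdot(F\rhd G)=F*_{\rhd}G=F~\overline\sqdot~G=G\sqdot F$, while $F\cdot(F\rhd_l H)=F*_{\rhd_l}H=F~\overline\star_l~H=H\star_l F$ with $H:=S^l(F)\rhd_r G$; combining these gives $G\sqdot F=(S^l(F)\rhd_r G)\star_l F$, and renaming $F\leftrightarrow G$ produces $F\sqdot G=(S^l(G)\rhd_r F)\star_l G$. The second $\sqdot'$-identity follows identically from the $\rhd'$-identity, using that $\overline\sqdot'$ and $\overline\star_r$ are the respective Grossman--Larson laws.

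I expect the only delicate point to be bookkeeping rather than anything conceptual: one must keep the left/right conventions straight (on which side the unit $I$ sits, and whether a factor is multiplied or composed), and apply right-distributivity \eqref{rightaction} in the correct direction, since composition distributes over multiplication only on the right. Every reduction must therefore be arranged so that the outer operation being post-composed is $\circ$ acting on a product. Once the key substitution identity $(S^l(F)\cdot I)\circ(IF)=F^{-1}IF$ is isolated, everything else is formal manipulation within the post-group framework already set up.
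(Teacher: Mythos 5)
Your proposal is correct and follows essentially the same route as the paper: the paper likewise reduces the first identity to $(S^l(F)\cdot I)\circ(IF)=S^l(F)\circ(IF)\cdot IF=F^{-1}\cdot IF$ via right-distributivity and $S^l(F)\circ(IF)=F^{-1}$, and then obtains the $\sqdot$-identities by recognising Grossman--Larson products exactly as you do. No gaps.
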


\begin{proof}
We have
\[
	F \rhd_l (S^l(F) \rhd_r G) 
	= G \circ (S^l(F)I) \circ (IF) 
	= G \circ (S^l(F)\circ(IF) IF) 
	= G \circ (F^{-1}IF) 
	= F \rhd G,
\]
and similarly for $F\rhd' G$. Thus 
\[
	F \sqdot G 
	= G \cdot (G \rhd F) 
	= G \cdot G \rhd_l (S^l(G) \rhd_r F) 
	= (S^l(G)\rhd_r F) \star_l G,
\]
and similarly for $F \sqdot' G := F \circ (GIG^{-1}) \cdot G$ introduced in Definition \ref{def:squareboxopp}.
\end{proof}

Section \ref{ssec:more_relations} investigates further this unexpected connection.

%%%%%%%%%%%%%%%%%%%%%%%%%%%%%%%%%%%%%%%%%%%%%%%%%%%%%%%%%%%%%%%%%%%%%%%%%%%%%%%%%%%%%%%%%%%%%%%%%%%

\subsection{The boxed convolution as a group operation}
\label{ssec:boxconv_is_group_law}

Using the (Grossman--Larson) product $\sqdot$ permits to express the identity \eqref{eq:S-transf_twisted_factorisation} in Theorem \ref{thm:S-transf_twisted_factorisation} more compactly
\begin{equation}
\label{eq:twisted_factorisation}
	S^l(K_{ab}) = S^l(K_a) \sqdot S^l(K_b).
\end{equation}
We would like to write directly
\begin{equation}
\label{eq:boxcon_prop}
	K_{ab} = K_a \boxcon K_b
\end{equation}
for some product $\boxcon$, that will be called the \textit{boxed convolution}. This would generalise the boxed convolution opertion from Nica and Speicher \cite{nica_speicher_book} to the operator-valued setting.

\begin{defn}
\label{def:boxconvol}
For $F,G\in G_B^\mathrm{inv}$, define 
\begin{equation}
\label{eq:boxconvol}
	F \boxcon G := S^l(S^l(F) \sqdot S^l(G))
\end{equation}
\end{defn}

From this definition and Theorem \ref{thm:S-transf_twisted_factorisation}, it is clear that equation \eqref{eq:boxcon_prop} holds. Let us show that the boxed convolution operation carries more structure than just satisfying this equation. First, it is not dependant on a left-right choice, as opposed to Theorem \ref{thm:S-transf_twisted_factorisation}. Indeed, we have the following identity:

\begin{lem}
For $F,G\in G_B^\mathrm{inv}$,
\begin{equation}
\label{eq:inverserelation}
	 S^r(S^r(G)\sqdot' S^r(F)) = S^l(S^l(F)\sqdot S^l(G)) = F \boxcon G.
\end{equation}
\end{lem}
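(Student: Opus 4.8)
The second equality is precisely Definition~\ref{def:boxconvol}, so the whole content lies in the first equality. My plan is to compute each of the two outer expressions in the coordinates natural to it — the left-hand member through left multiplication by $I$ (the map $\lambda$), the right-hand member through right multiplication by $I$ (the map $\rho$) — reduce each to a compact closed form, and then identify the two resulting series. Before starting I would record the right-handed companion of Corollary~\ref{cor:other_expr_S_f-1IS_f}: alongside $I\cdot S^l(H)=(IH)^{\circ-1}$, $S^r(H)\cdot I=(HI)^{\circ-1}$ and $S^l(H)^{-1}\cdot I\cdot S^l(H)=(HI)\circ(IH)^{\circ-1}$, the same computation (or transport through the inverse-group correspondence of Remark~\ref{rmk:post_group_corresp_inverse_group} and the isomorphism $\sigma$) gives
\[
	S^r(H)\cdot I\cdot S^r(H)^{-1}=(IH)\circ(HI)^{\circ-1}.
\]

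Unfolding $F\boxcon G=S^l(S^l(F)\sqdot S^l(G))$, substituting $I\cdot S^l(G)=(IG)^{\circ-1}$ and $S^l(G)^{-1}IS^l(G)=(GI)\circ(IG)^{\circ-1}$, and then using right-distributivity of composition over multiplication (Lemma~\ref{lem:rightaction}) to pull out the common factor $\circ(IG)^{\circ-1}$ \emph{before} taking the compositional inverse, yields
\[
	I\cdot(F\boxcon G)=(IG)\circ\bigl(I\cdot(S^l(F)\circ(GI))\bigr)^{\circ-1},
	\qquad\text{equivalently}\qquad
	F\boxcon G=G\star_l S^l\bigl(S^l(F)\circ(GI)\bigr),
\]
where the second form follows because $\lambda$ is a group isomorphism $(G_B^\inv,\star_l)\to(G_B^{I,l},\circ)$ and $S^l$ is the $\star_l$-inverse. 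The strictly parallel computation on the right member, using instead $S^r(F)\cdot I=(FI)^{\circ-1}$ and the mirror corollary above, gives
\[
	\bigl(S^r(S^r(G)\sqdot' S^r(F))\bigr)\cdot I=(FI)\circ\bigl((S^r(G)\circ(IF))\cdot I\bigr)^{\circ-1},
	\quad\text{i.e.}\quad
	S^r(S^r(G)\sqdot' S^r(F))=F\star_r S^r\bigl(S^r(G)\circ(IF)\bigr).
\]

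The last step — showing that the two series on the right of these displays coincide, i.e. that $G\star_l S^l(S^l(F)\circ(GI))=F\star_r S^r(S^r(G)\circ(IF))$ — is the crux, and the main obstacle is exactly that one side is produced as a $\lambda$-image and the other as a $\rho$-image, so the noncommutative left and right shift structures must be reconciled. I would finish by expanding both compact forms down to the four elementary shifts $IF,FI,IG,GI$ and their compositional inverses and checking agreement. Conceptually this equality is forced by the symmetry exchanging the entire left framework $(\star_l,S^l,\rhd_l,\sqdot)$ with the right one $(\star_r,S^r,\rhd_r,\sqdot')$, realised by argument-reversal together with passage to $B^{\mathrm{op}}$; I would be careful here, however, since that reversal exchanges the two expressions but maps into $\Mult[[B^{\mathrm{op}}]]$, so it explains the symmetry without, by itself, closing the proof, and the explicit reconciliation above is still required.

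A cleaner alternative finish, if one is willing to use the probabilistic input, is to realise arbitrary $F,G\in G_B^\inv$ as free cumulant series $K_a,K_b$ of free variables with $\mathbb E(a),\mathbb E(b)$ invertible, and to read the identity off the fact that the left- and right-handed forms of Dykema's twisted factorisation (Theorem~\ref{thm:S-transf_twisted_factorisation}, together with its $S^r$/$\sqdot'$ mirror) both compute the single series $K_{ab}$; applying $S^l$, respectively $S^r$, then returns the two sides of \eqref{eq:inverserelation}.
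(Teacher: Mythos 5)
Your reduction of both members to the compact forms
\[
	F \boxcon G \;=\; G\star_l S^l\bigl(S^l(F)\circ(GI)\bigr),
	\qquad
	S^r\bigl(S^r(G)\sqdot' S^r(F)\bigr) \;=\; F\star_r S^r\bigl(S^r(G)\circ(IF)\bigr),
\]
is correct, and so is your mirror of Corollary \ref{cor:other_expr_S_f-1IS_f}; in the paper's later notation these two right-hand sides are $G\star_l H_1$ and $F\star_r H_2$ with $H_1=S^l(G\rhd_r S^l(F))$ and $H_2=S^r(F\rhd_l S^r(G))$. But the argument stops exactly where the work begins. You name the equality $G\star_l S^l(S^l(F)\circ(GI)) = F\star_r S^r(S^r(G)\circ(IF))$ as the crux and then only promise to ``expand down to the four elementary shifts and check agreement''. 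That identity is not a routine expansion: it is precisely the content of the first two lines of \eqref{eq:H1H2} in Lemma \ref{lem:boxcon_H_1_H_2}, whose proof hinges on establishing $H_1=H_2\rhd_r F$ (equation \eqref{eq:H1_is_H2_rhdr_F}) through a ten-step chain of manipulations of $(IF)^{\circ-1}$, $(GI)^{\circ-1}$ and right-distributivity; none of that computation appears in your proposal, and you cannot instead cite Proposition \ref{prop:eqs_boxcon_subord} or Lemma \ref{lem:boxcon_H_1_H_2}, since in the paper the second identity of Proposition \ref{prop:eqs_boxcon_subord} is itself deduced from the present lemma. The symmetry argument via argument-reversal into $B^{\mathrm{op}}$ is, as you yourself concede, not a proof. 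The probabilistic fallback has two further gaps: the paper nowhere shows that an arbitrary pair $F,G\in G_B^{\inv}$ is realised as cumulant series of free variables with invertible means, and the ``$S^r$/$\sqdot'$ mirror'' of Theorem \ref{thm:S-transf_twisted_factorisation} is never stated or proved, so you would be assuming a right-handed twisted factorisation that is essentially equivalent to the identity under proof.

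For comparison, the paper's own proof is entirely different and three lines long: it invokes the relations $S^{\sqdot}=S^lS^r\sigma=\sigma S^rS^l$ from Proposition \ref{prop:S_l_r_sqdot_related} (Section \ref{ssec:more_relations}), rewrites the outer and inner occurrences of $S^l$ accordingly, and then uses that $S^{\sqdot}$ is the inverse map (hence an antimorphism) for $\sqdot$ and that $\sigma$ is an isomorphism $(G_B^{\inv},\sqdot)\to(G_B^{\inv},\sqdot')$ to convert $S^l(S^l(F)\sqdot S^l(G))$ directly into $S^r(S^r(G)\sqdot' S^r(F))$. Your route would be a legitimate alternative — and would in passing reprove part of Lemma \ref{lem:boxcon_H_1_H_2} — but only once the crux identity is actually carried out; as written the proposal is incomplete.
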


\begin{proof}
We will see in Section \ref{ssec:more_relations} that $S^{\sqdot} = S^lS^r\sigma = \sigma S^rS^l$. Therefore
\begin{align*}
	S^l(S^l(F)\sqdot S^l(G)) &= S^r\sigma S^{\sqdot}(S^{\sqdot}\sigma S^r(F) \sqdot S^{\sqdot}\sigma S^r(G)) \\ 
	&= S^r \sigma (\sigma S^r(G) \sqdot \sigma S^r(F)) \\ 
	&= S^r ( S^r(G) \sqdot' S^r(F)). 
\end{align*}
\end{proof}

\begin{rmk}
\label{rmk:def_boxconv}
\begin{enumerate}
	\item The set $G_B^\inv$ forms a group for the boxed convolution product \eqref{eq:boxconvol}, and 
	\[
		\begin{array}{cccc}
		S^l : 	& (G_B^\inv, \sqdot) 	& \to 	& (G_B^\inv, \boxcon) \\[0.2cm]
		S^r : 	& (G_B^\inv, \sqdot') 	& \to 	& (G_B^\inv, \overline{\boxcon})
		\end{array}	
	\]
	are isomorphisms of groups.
	
	\item The product $\boxconv$ defined over $G_B^{I,l}$ which we introduced in \cite[Definition 3.16]{part1} satisfies
	\[
		(IK_{ab}) = (IK_a) \boxconv (IK_b).
	\]
	In fact, since the series $K_a$ and $K_b$ can be replaced by any other series in $G_B^\inv$, the product $\boxconv$ relates to the product $\boxcon$ in \eqref{eq:boxconvol}, as follows: for $F,G\in G_B^\inv$, 
	\[
		I(F\boxcon G) = (IF) \boxconv (IG).
	\]
	Moreover, the map $\lambda : (G_B^\inv, \boxcon) \to (G_B^{I,l},\boxconv)$, $F \mapsto IF$, is an isomorphism of groups.
\end{enumerate}
\end{rmk}

Surprisingly, the inverse map for the product \eqref{eq:boxconvol} is quite simple.

\begin{prop}
The inverse map for the product $\boxcon$ defined in \eqref{eq:boxconvol} is given, for $F\in G_B^\mathrm{inv}$, by
\[
	S^{\tiny\smboxcon}(F) = F^{-1}\circ(FIF)^{\circ-1}.
\]
\end{prop}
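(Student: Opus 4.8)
The plan is to exploit that $\boxcon$ and $\sqdot$ are conjugate group laws. By Remark \ref{rmk:def_boxconv} the map $S^l \colon (G_B^{\mathrm{inv}},\sqdot) \to (G_B^{\mathrm{inv}},\boxcon)$ is a group isomorphism, and by Proposition \ref{prop:inverse_maps_S} it is involutive, so $(S^l)^{-1}=S^l$. Since any group isomorphism intertwines the respective inversion maps, the inverse map for $\boxcon$ must be the conjugate of the inverse map $S^{\sqdot}$ for $\sqdot$, namely
\[
	S^{\smboxcon} = S^l \circ S^{\sqdot} \circ S^l .
\]
It then remains to simplify the right-hand side to the claimed closed form $F\mapsto F^{-1}\circ(FIF)^{\circ-1}$.

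First I would invoke the identity $S^{\sqdot} = \sigma\, S^r S^l$ established in Section \ref{ssec:more_relations} (already used in the proof of \eqref{eq:inverserelation}). Combined with $S^l S^l = \id$, this collapses the threefold composition to $S^{\smboxcon} = S^l\circ\sigma\circ S^r$. The next step is to compute $\sigma S^r(F)$: starting from $S^r(F)=F^{-1}\circ(FI)^{\circ-1}$ and using that the multiplicative inverse passes through composition on the right, i.e.\ $(g\circ h)^{-1}=g^{-1}\circ h$ (immediate from the right-distributivity of Lemma \ref{lem:rightaction}, since $(g^{-1}\circ h)\cdot(g\circ h)=(g^{-1}g)\circ h = 1$), one gets $\sigma S^r(F)=F\circ(FI)^{\circ-1}=:Q$.

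The heart of the computation is applying $S^l$ to $Q$. Writing $S^l(Q)=Q^{-1}\circ(IQ)^{\circ-1}$, the same inverse-versus-composition rule yields $Q^{-1}=F^{-1}\circ(FI)^{\circ-1}$, so everything hinges on identifying $(IQ)^{\circ-1}$. The key identity is
\[
	(IQ)\circ(FI) = FIF,
\]
which follows by expanding $IQ = I\cdot\bigl(F\circ(FI)^{\circ-1}\bigr)$, applying right-distributivity, and using that $I$ is the two-sided unit for composition (so $I\circ(FI)=FI$ and $(FI)^{\circ-1}\circ(FI)=I$). From this one reads off $(IQ)^{\circ-1}=(FI)\circ(FIF)^{\circ-1}$, and substituting gives
\[
	S^l(Q) = F^{-1}\circ(FI)^{\circ-1}\circ(FI)\circ(FIF)^{\circ-1} = F^{-1}\circ(FIF)^{\circ-1},
\]
as claimed.

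The main obstacle is essentially bookkeeping: the products and composition do not commute, so one must carefully track the left/right placement of the unit $I$ (the asymmetry between $IF$, $FI$, and the symmetric $FIF$) and repeatedly use that $\circ$ is only right-distributive over $\cdot$, never left-distributive. An alternative, fully self-contained route avoids the forward reference to $S^{\sqdot}=\sigma S^r S^l$ by instead verifying directly that $F\boxcon\bigl(F^{-1}\circ(FIF)^{\circ-1}\bigr)=1$, equivalently $S^l\bigl(F^{-1}\circ(FIF)^{\circ-1}\bigr)=S^{\sqdot}(S^l(F))$; but this requires the same manipulations in a less transparent order, so I would prefer the conjugation argument above.
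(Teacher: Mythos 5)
Your proof is correct and follows essentially the same route as the paper's: both start from the conjugation identity $S^{\tiny\smboxcon}=S^l\circ S^{\sqdot}\circ S^l$, reduce the problem to computing $S^l\big(F\circ(FI)^{\circ-1}\big)$, and conclude with the same manipulation of multiplicative and compositional inverses using the right-distributivity of Lemma \ref{lem:rightaction}. The only difference is that you reach the intermediate series $F\circ(FI)^{\circ-1}$ via the identity $S^{\sqdot}=\sigma S^r S^l$ of Proposition \ref{prop:S_l_r_sqdot_related} (the same forward reference the paper itself makes in this subsection), whereas the paper obtains it by substituting $S^l(F)=F^{-1}\circ(IF)^{\circ-1}$ directly into the formula for $S^{\sqdot}$ and simplifying.
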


\begin{proof}
We have
\begin{align*}
	S^{\tiny\smboxcon}(F) &= S^l(S^{\sqdot}(S^l(F))) \\ 
	&= S^l(S^{\sqdot}(F^{-1}\circ(IF)^{\circ-1})) \\ 
	&= S^l\Big( F\circ(IF)^{\circ-1} \circ (F\circ(IF)^{\circ-1} I F^{-1}\circ(IF)^{\circ-1})^{\circ-1}\Big) \\ 
	&= S^l\Big( F \circ (IF)^{\circ-1} \circ (IF) \circ(F(IF)F^{-1})^{\circ-1} \Big) \\ 
	&= S^l(F \circ (FI)^{\circ-1}) \\ 
	&= F^{-1} \circ (FI)^{\circ-1} \circ (IF \circ (FI))^{\circ-1} \\ 
	&= F^{-1} \circ (FI)^{\circ-1} \circ (FI) \circ ((FI)F)^{\circ-1} \\ 
	&= F^{-1}\circ(FIF)^{\circ-1}.
\end{align*}
\end{proof}

Moreover, two other expressions for the product $F\boxcon G$ using the post-groups $(G_B^\inv, \cdot, \rhd_l)$ and $(G_B^\inv, \overline\cdot, \rhd_r) $ can also be derived.

\begin{prop}
\label{prop:eqs_boxcon_subord}
For $F,G\in G_B^\mathrm{inv}$, we have
\begin{align*}
	F \boxcon G &= G \star_l S^l(G \rhd_r S^l(F)) \\ 
	F \boxcon G &= F \star_r S^r(F \rhd_l S^r(G)).
\end{align*}
\end{prop}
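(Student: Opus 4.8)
The plan is to deduce both identities formally from the relations already assembled in Proposition \ref{prop:rhd_from_rhd_l_and_rhd_r}, together with the fact that $S^l$ and $S^r$ are the inverse maps — hence group anti-automorphisms — of the groups $(G_B^\inv, \star_l)$ and $(G_B^\inv, \star_r)$ respectively (Lemma \ref{lem:star_l_r_group_laws} and Proposition \ref{prop:inverse_maps_S}).

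First I would unfold the definition $F \boxcon G = S^l(S^l(F) \sqdot S^l(G))$ and rewrite the inner $\sqdot$-product using the identity $X \sqdot Y = (S^l(Y) \rhd_r X) \star_l Y$ from Proposition \ref{prop:rhd_from_rhd_l_and_rhd_r}, taken with $X = S^l(F)$ and $Y = S^l(G)$. Since $S^l$ is involutive, $S^l(S^l(G))$ collapses to $G$, giving
\[
	F \boxcon G = S^l\big((G \rhd_r S^l(F)) \star_l S^l(G)\big).
\]
Next I would use that the group inverse reverses products, $S^l(A \star_l B) = S^l(B) \star_l S^l(A)$, and apply involutivity a second time to $S^l(S^l(G)) = G$; this produces the first claimed identity $F \boxcon G = G \star_l S^l(G \rhd_r S^l(F))$.

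The second identity would be obtained by the mirror-image computation on the right. Here the starting point is the alternative expression $F \boxcon G = S^r(S^r(G) \sqdot' S^r(F))$ furnished by \eqref{eq:inverserelation}; I would then substitute $X \sqdot' Y = (S^r(Y) \rhd_l X) \star_r Y$ and use that $S^r$ is an involutive anti-automorphism of $(G_B^\inv, \star_r)$, exactly paralleling the left-handed case.

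I do not anticipate a genuine obstacle, since each line is an instance of a previously proven relation. The one subtlety worth stating explicitly is the anti-automorphism property: it is not an additional hypothesis but an automatic feature of an inverse map in a group, and recognising that it is precisely this order-reversal which converts the inner Grossman--Larson factorisation into the outer $\star_l$- (respectively $\star_r$-) factorisation is the conceptual crux of the argument.
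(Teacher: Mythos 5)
Your argument is correct and follows essentially the same route as the paper: unfold $F \boxcon G = S^l(S^l(F) \sqdot S^l(G))$, rewrite the inner $\sqdot$-product via Proposition \ref{prop:rhd_from_rhd_l_and_rhd_r}, and apply the order-reversing property of the group inverse $S^l$ together with its involutivity (and mirror this with $\sqdot'$, $S^r$ for the second identity). The paper performs the same three-line computation, merely leaving the anti-automorphism step implicit where you spell it out.
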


\begin{proof}
We know from Proposition \ref{prop:rhd_from_rhd_l_and_rhd_r} that $F \sqdot G = (S^l(G)\rhd_r F) \star_l G$. Therefore 
\begin{align*}
	F \boxcon G &= S^l(S^l(F) \sqdot S^l(G)) \\ 
	&= S^l((G\rhd_r S^l(F)) \star_l S^l(G)) \\ 
	&= G \star_l S^l(G \rhd_r S^l(F)).
\end{align*}
The second equality follows similarly from $F \sqdot' G = (S^r(G)\rhd_l F) \star_r G$.
\end{proof}

\begin{rmk}
\label{rmk:commutative}
In the case of a commutative $B$, i.e., when $ \star_l =  \star_r$ and $ \rhd_r = \rhd_l$, Proposition \ref{prop:eqs_boxcon_subord} is consistent with the commutativity of the boxed convolution in  \cite{nica_speicher_book} 
$$
	F \boxcon G
	= G \boxcon F.
$$
\end{rmk}

We can also derive alternative expressions of the operator-valued moment-cumulant relations.

\begin{lem}
\label{lem:equiv_MC_relations}
Let $M_a, K_a \in G_B^\mathrm{inv}$. The following statements are equivalent.
\begin{align}
	IK_a &= (IM_a) \circ(I + IM_aI)^{\circ-1} \label{eq:MC_1}\\
	K_aI &= (M_aI) \circ(I + IM_aI)^{\circ-1} \label{eq:MC_2}\\
	I + IM_aI &= ((1+IK_a)^{-1}I)^{\circ-1} \label{eq:MC_3} \\ 
		   &= (I(1+K_aI)^{-1})^{\circ-1} \label{eq:MC_4} \\ 
	1+IM_a &= S^r((1+IK_a)^{-1}) \label{eq:MC_5} \\ 
	1+M_aI &= S^l((1+K_aI)^{-1}) \label{eq:MC_6}
\end{align}
These are free moment-cumulant relations: for any random variable $a\in \mathcal A$, the series $M_a$ and $K_a$ satisfy all these conditions.
\end{lem}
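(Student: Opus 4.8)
The plan is to exploit the evident left--right symmetry of the six relations and to funnel every equivalence through \eqref{eq:MC_5} and \eqref{eq:MC_6}. First I would record the bookkeeping identity coming from associativity of multiplication, $(1+IK_a)\cdot I = I\cdot(1+K_aI)$ (both sides equal $I + IK_aI$); multiplying on the left by $(1+IK_a)^{-1}$ and on the right by $(1+K_aI)^{-1}$ then gives $(1+IK_a)^{-1}I = I(1+K_aI)^{-1}$. Consequently the two right-hand sides in \eqref{eq:MC_3} and \eqref{eq:MC_4} coincide, so \eqref{eq:MC_3} $\Leftrightarrow$ \eqref{eq:MC_4} holds with no further work. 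It then suffices to establish the two mirror chains \eqref{eq:MC_1} $\Leftrightarrow$ \eqref{eq:MC_3} $\Leftrightarrow$ \eqref{eq:MC_5} and \eqref{eq:MC_2} $\Leftrightarrow$ \eqref{eq:MC_4} $\Leftrightarrow$ \eqref{eq:MC_6}, since \eqref{eq:MC_3} $\Leftrightarrow$ \eqref{eq:MC_4} glues the two sides together.

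For the step \eqref{eq:MC_3} $\Leftrightarrow$ \eqref{eq:MC_5}, I would rewrite the left-hand side as $I + IM_aI = (1+IM_a)\cdot I$ and use Definition \ref{def:S_transform} to rewrite the right-hand side as $((1+IK_a)^{-1}I)^{\circ-1} = S^r((1+IK_a)^{-1})\cdot I$. Since $\rho : F \mapsto FI$ is injective (it is the bijection of \eqref{eq:leftrightmaps}) and both factors lie in $G_B^\inv$, cancelling the common right factor $I$ turns \eqref{eq:MC_3} into precisely $1+IM_a = S^r((1+IK_a)^{-1})$, i.e.\ \eqref{eq:MC_5}. The mirror computation, using $I + IM_aI = I\cdot(1+M_aI)$, the defining relation $(IF)^{\circ-1} = I\cdot S^l(F)$, and injectivity of $\lambda$, gives \eqref{eq:MC_4} $\Leftrightarrow$ \eqref{eq:MC_6}.

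The heart of the proof is the step \eqref{eq:MC_1} $\Leftrightarrow$ \eqref{eq:MC_5}. Writing $G := 1+IM_a \in G_B^\inv$ and $Q := (I + IM_aI)^{\circ-1} = (GI)^{\circ-1} = S^r(G)\cdot I$, I would first simplify \eqref{eq:MC_1}: since composition is linear in its left argument and $1\circ Q = 1$, the identity $IK_a = (IM_a)\circ Q = (G-1)\circ Q$ is equivalent to $1 + IK_a = G\circ Q$. The crucial point is then that $G\circ Q = S^r(G)^{-1}$ holds \emph{unconditionally}: by the fixed-point characterisation of Lemma \ref{lem:fixed_point_S_f} one has $S^r(G) = G^{-1}\circ Q$, and right-distributivity of composition over multiplication (Lemma \ref{lem:rightaction}) yields $(G^{-1}\circ Q)\cdot(G\circ Q) = (G^{-1}\cdot G)\circ Q = 1\circ Q = 1$, so $G\circ Q$ is the multiplicative inverse of $S^r(G)$. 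Hence \eqref{eq:MC_1} is equivalent to $1+IK_a = S^r(1+IM_a)^{-1}$, which by involutivity of $S^r$ (Proposition \ref{prop:inverse_maps_S}) is exactly \eqref{eq:MC_5}; the mirror argument gives \eqref{eq:MC_2} $\Leftrightarrow$ \eqref{eq:MC_6}. This interplay between multiplication and composition --- forcing $G\circ Q$ to invert $G^{-1}\circ Q$ --- is the one genuinely non-formal manipulation and the step I expect to require the most care. Finally, the concluding assertion is immediate: relation \eqref{eq:MC_1} is the defining equation of $K_a$ in Definition \ref{def:op_valued_free_cumulants}, so by the established equivalences all six relations hold for the moment and cumulant series of any $a \in \mathcal A$.
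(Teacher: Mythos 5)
Your proposal is correct and follows essentially the same route as the paper: the same decomposition into the sub-equivalences \eqref{eq:MC_3}$\Leftrightarrow$\eqref{eq:MC_4}, \eqref{eq:MC_3}$\Leftrightarrow$\eqref{eq:MC_5}, \eqref{eq:MC_4}$\Leftrightarrow$\eqref{eq:MC_6}, \eqref{eq:MC_5}$\Leftrightarrow$\eqref{eq:MC_1}, \eqref{eq:MC_6}$\Leftrightarrow$\eqref{eq:MC_2}, using the same key facts (the identity $(1+IK_a)^{-1}I=I(1+K_aI)^{-1}$, the defining relation $(FI)^{\circ-1}=S^r(F)I$, and involutivity of $S^{r/l}$). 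Your explicit derivation of $G\circ Q=S^r(G)^{-1}$ from the fixed-point lemma and right-distributivity merely spells out a step the paper leaves implicit.
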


\begin{proof}
Equivalence \eqref{eq:MC_3} $\iff$ \eqref{eq:MC_4} is clear since 
$$
	(1+IK_a)^{-1}I 
	= I - IK_aI + IK_aIK_aI - IK_aIK_aIK_aI +\cdots 
	= I(1+K_aI)^{-1}.
$$

Equivalences \eqref{eq:MC_3} $\iff$ \eqref{eq:MC_5} and \eqref{eq:MC_4} $\iff$ \eqref{eq:MC_6} come from the facts that 
$$
	((1+IK_a)^{-1}I)^{\circ-1} = S^r((1+IK_a)^{-1})I
$$ 
respectively  
$$
	(I(1+K_aI)^{-1})^{\circ-1} = IS^l((1+K_aI)^{-1}).
$$
Moreover, equation \eqref{eq:MC_5} can be written as 
$$
	1+IK_a 
	= S^r(1+IM_a)^{-1} 
	= (1+IM_a) \circ(I + IM_aI)^{\circ-1} 
	= 1 + (IM_a) \circ(I + IM_aI)^{\circ-1},
$$ 
which shows \eqref{eq:MC_5} $\iff$ \eqref{eq:MC_1}. Similarly, we have \eqref{eq:MC_6} $\iff$ \eqref{eq:MC_2}.
\end{proof}

\begin{defn}
Let us define the zeta series as 
\[
	\zeta := 1 + I + I^2 +\cdots = (1-I)^{-1}.
\]
\end{defn}

\begin{lem}
\label{lem:S^l_zeta}
For $F\in G_B^\mathrm{inv}$, we have
\begin{align*}
	S^l(\zeta) = S^r(\zeta) = (-1) \rhd_r \zeta = (-1) \rhd_l \zeta = (1+I)^{-1} = 1 - I + I^2 - I^3 +\cdots.
\end{align*}
\end{lem}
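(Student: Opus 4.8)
The plan is to reduce everything to the explicit component description of $\zeta$ and $(1+I)^{-1}$, and then verify the three equalities by direct computation with the formulas \eqref{multipl} and \eqref{compo}. First I would record that, since $(I^k)_n$ is nonzero only for $n=k$, where it is the map $(x_1,\ldots,x_k)\mapsto x_1\cdots x_k$, the series $\zeta=(1-I)^{-1}=\sum_{k\ge0}I^k$ has components $\zeta_n(x_1,\ldots,x_n)=x_1\cdots x_n$, while $(1+I)^{-1}=\sum_{k\ge0}(-1)^kI^k$ has components $((1+I)^{-1})_n(x_1,\ldots,x_n)=(-1)^n x_1\cdots x_n$. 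In particular $(1+I)^{-1}=1-I+I^2-I^3+\cdots$, which is the last claimed equality, so it only remains to match the first four expressions to $(1+I)^{-1}$.

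For the two actions, note that $I\cdot(-1)=(-1)\cdot I=-I$ by \eqref{multipl}, so that $(-1)\rhd_l\zeta=\zeta\circ(-I)=(-1)\rhd_r\zeta$. Since $-I$ is concentrated in degree one, every block $k_i\ge 1$ in the composition sum \eqref{compo} is forced to equal one (so $l=n$), whence $(\zeta\circ(-I))_n(x_1,\ldots,x_n)=\zeta_n(-x_1,\ldots,-x_n)=(-1)^n x_1\cdots x_n$. This is exactly $(1+I)^{-1}$, giving $(-1)\rhd_l\zeta=(-1)\rhd_r\zeta=(1+I)^{-1}$.

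For the S-transforms I would deliberately avoid inverting $I\zeta$ under composition directly, and instead invoke the fixed point characterisation of Lemma \ref{lem:fixed_point_S_f}. Since $\zeta^{-1}=1-I$, it suffices to check that $X:=(1+I)^{-1}$ solves both $X=(1-I)\circ(I\cdot X)$ and $X=(1-I)\circ(X\cdot I)$. One computes from \eqref{multipl} that $I(1+I)^{-1}=(1+I)^{-1}I$ has components $(-1)^{n-1}x_1\cdots x_n$; then, because $1-I$ is supported in degrees $0$ and $1$ only, the composition \eqref{compo} collapses to the $l=0$ term (contributing the constant $1$ in degree $0$) and the $l=1$ term (contributing $-(-1)^{n-1}x_1\cdots x_n=(-1)^nx_1\cdots x_n$ in degree $n\ge1$), reproducing $X$. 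By the uniqueness asserted in Lemma \ref{lem:fixed_point_S_f}, this forces $S^l(\zeta)=S^r(\zeta)=(1+I)^{-1}$, completing the chain of equalities.

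The computations are all routine, and the only genuine choice is organisational. The main point worth getting right is the bookkeeping in the composition formula \eqref{compo}; and the one thing worth flagging is that routing the S-transform through the fixed point equation, rather than through the explicit compositional inverse $(I\zeta)^{\circ-1}$, is precisely what keeps the argument free of an alternating binomial identity such as $\sum_{l=1}^{n}\binom{n-1}{l-1}(-1)^{n-l}=0$ for $n\ge 2$, which the direct inversion route would otherwise require.
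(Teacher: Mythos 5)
Your proposal is correct and rests on the same key ingredient as the paper's proof, namely the fixed point characterisation of Lemma \ref{lem:fixed_point_S_f} together with $\zeta^{-1}=1-I$; the only difference is organisational, in that the paper solves the fixed point equation forward (obtaining $S^l(\zeta)=1-IS^l(\zeta)$, hence $(1+I)S^l(\zeta)=1$) while you verify the candidate $(1+I)^{-1}$ and invoke uniqueness. Your component-level computations, including the identification $(-1)\rhd_l\zeta=(-1)\rhd_r\zeta=\zeta\circ(-I)=(1+I)^{-1}$, are all accurate.
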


\begin{proof}
Indeed, going back to  Lemma \ref{lem:fixed_point_S_f}, we see that using the first fixed point equation in \eqref{eq:fixed_point_S_f} gives 
\[
	S^l(\zeta) 
		= S^l(\zeta) \rhd_l \sigma(\zeta) 
		=  S^l(\zeta) \rhd_l \zeta^{-1}  
		= S^l(\zeta) \rhd_l (1-I) 
		= 1 - I S^l(\zeta).
\]
And therefore $(1+I) S^l(\zeta) = 1$ and $S^l(\zeta) = (1+I)^{-1} = 1 - I + I^2 - I^3 +\cdots = \zeta \circ(-I)$. The same argument goes for $S^r(\zeta)$.
\end{proof}

\begin{prop}
\label{prop:zetaboxconvol}
Let $M_a,K_a\in G_B^\mathrm{inv}$ be respectively the moment series and the cumulant series of a random variable $a  \in \mathcal{A}$, i.e., two series such that the equalities in Lemma \ref{lem:equiv_MC_relations} hold. Then
\begin{equation}
\label{eq:boxconvolzeta}
	M_a = K_a \boxcon \zeta.
\end{equation}
\end{prop}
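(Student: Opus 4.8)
The plan is to transport the statement across the isomorphism $S^l$ and reduce it to a purely multiplicative identity between the moment and cumulant series. Since $K_a \boxcon \zeta = S^l\bigl(S^l(K_a)\sqdot S^l(\zeta)\bigr)$ by Definition \ref{def:boxconvol}, and $S^l$ is involutive by Proposition \ref{prop:inverse_maps_S}, the desired equality $M_a = K_a\boxcon\zeta$ is equivalent to $S^l(M_a) = S^l(K_a)\sqdot S^l(\zeta)$. I would establish this last identity.

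First I would evaluate the right-hand side. By Lemma \ref{lem:S^l_zeta}, $S^l(\zeta) = (1+I)^{-1}$, so its multiplicative inverse is $1+I$. The elements $1+I$, $I$ and $(1+I)^{-1}$ are all (multiplicative) power series in $I$, and hence commute with one another under $\cdot$; therefore the conjugation collapses, $S^l(\zeta)^{-1}\cdot I\cdot S^l(\zeta) = (1+I)\,I\,(1+I)^{-1} = I$. Substituting into Definition \ref{def:sqdot1} and using $S^l(K_a)\circ I = S^l(K_a)$ yields $S^l(K_a)\sqdot S^l(\zeta) = S^l(\zeta)\cdot S^l(K_a) = (1+I)^{-1}\,S^l(K_a)$. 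Hence the goal reduces to the multiplicative identity $S^l(K_a) = (1+I)\,S^l(M_a)$.

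To prove this remaining identity I would apply the injective map $\lambda(F) = IF$ of \eqref{eq:leftrightmaps} and compute in $G_B^{I,l}$. Starting from the moment-cumulant relation \eqref{eq:MC_1} and taking compositional inverses gives $I\,S^l(K_a) = (IK_a)^{\circ-1} = (I + IM_aI)\circ (IM_a)^{\circ-1}$. The key move is to factor $I + IM_aI = (1+IM_a)\cdot I$ and invoke right-distributivity of composition over multiplication (Lemma \ref{lem:rightaction}), obtaining $\bigl((1+IM_a)\circ(IM_a)^{\circ-1}\bigr)\cdot\bigl(I\circ (IM_a)^{\circ-1}\bigr)$. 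Since $I\,S^l(M_a) = (IM_a)^{\circ-1}$, the relation $(IM_a)\circ(IM_a)^{\circ-1} = I$ simplifies the first factor to $1+I$, while the second factor is simply $I\,S^l(M_a)$. Thus $I\,S^l(K_a) = (1+I)\cdot I\,S^l(M_a) = I\bigl((1+I)S^l(M_a)\bigr)$, and injectivity of $\lambda$ gives $S^l(K_a) = (1+I)S^l(M_a)$, which finishes the argument.

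The only place where noncommutativity genuinely intervenes is this last reduction: a naive term-by-term comparison of $(1+I)S^l(M_a)$ against the expansion of $(I+IM_aI)\circ(IM_a)^{\circ-1}$ does not transparently match, and the clean route is to factor $I$ out on the right and apply right-distributivity \emph{before} using the compositional-inverse relation $(IM_a)\circ (IM_a)^{\circ-1}=I$. Everything else is routine bookkeeping of composition versus multiplication, kept short by the precedence convention ($\circ$ before $\cdot$) and by the explicit form $S^l(\zeta)=(1+I)^{-1}$.
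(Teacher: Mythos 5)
Your proof is correct, but it follows a different route from the paper's primary argument. The paper proves \eqref{eq:boxconvolzeta} by invoking Proposition \ref{prop:eqs_boxcon_subord} to write $K_a \boxcon \zeta = K_a \star_l S^l(K_a \rhd_r S^l(\zeta))$, then unwinding this using Lemma \ref{lem:S^l_zeta}, the fact that $K_a\rhd_r(-)$ is a multiplicative automorphism, and the moment--cumulant relations \eqref{eq:MC_6} and \eqref{eq:MC_1}. Your approach is instead the one the paper only sketches afterwards in Remark \ref{rmk:proof2}: transport everything through the involution $S^l$, reduce to $S^l(M_a)=S^l(K_a)\sqdot S^l(\zeta)$, and collapse the $\sqdot$-product with $(1+I)^{-1}$ to an ordinary multiplication (your commutation argument is a special case of Lemma \ref{lem:1pI_sqdot_commute}). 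Where you genuinely add something is in the last step: the identity $S^l(K_a)=(1+I)\,S^l(M_a)$ is equation \eqref{eq:inversecumulantsmoments} of the paper, which is there obtained via the post-group computations of Section \ref{ssec:appear_naturally} (Proposition \ref{prop:rhd_from_rhd_l_and_rhd_r} together with Lemma \ref{lem:1pI_sqdot_commute}), whereas you derive it directly from \eqref{eq:MC_1} by taking compositional inverses, factoring $I+IM_aI=(1+IM_a)\cdot I$, and applying the right-distributivity \eqref{rightaction}. This makes your argument more self-contained and elementary — it needs only Definition \ref{def:S_transform}, Lemma \ref{lem:rightaction}, Lemma \ref{lem:S^l_zeta} and relation \eqref{eq:MC_1} — at the cost of not exhibiting the structural content (the subordination series $H_1$, and the role of $\zeta$ as a central element for $\boxcon$) that the paper's route through Proposition \ref{prop:eqs_boxcon_subord} makes visible.
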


\begin{proof}
From Proposition \ref{prop:eqs_boxcon_subord} we get
\begin{align*}
	K_a \boxcon \zeta &= K_a \star_l S^l(K_a \rhd_r S^l(\zeta)) \\ 
	&= K_a \star_l S^l(K_a \rhd_r (1+I)^{-1}) \\ 
	&= K_a \star_l S^l((K_a \rhd_r (1+I))^{-1}) \\ 
	&= K_a \star_l S^l((1+K_aI)^{-1}) \\ 
	&= K_a \star_l (1+M_aI) \\ 
	&= M_a.
\end{align*}
Note that the second equality follows from Lemma \ref{lem:S^l_zeta}, the fifth from equation \eqref{eq:MC_6}, and the last one from equation \eqref{eq:MC_1}. Indeed, $IM_a = (IK_a)\circ(I+IM_aI) = I(K_a \star_l (1+M_aI))$.
\\
We note that a second proof of Proposition \ref{prop:zetaboxconvol} can be given using Proposition \ref{prop:S_l_r_sqdot_related} and equation \eqref{eq:Sl_K_Sl_M_sqdot} (see Remark \ref{rmk:proof2} below). 
\end{proof}

\begin{rmk}
\label{rmk:NicaSpeicherzeta}
Observe that the moment-cumulant relation \eqref{eq:boxconvolzeta} in Proposition \ref{prop:zetaboxconvol} can be interpreted as the operator-valued generalisation of \cite[Proposition 17.4, p.274]{nica_speicher_book}.
\end{rmk}

%%%%%%%%%%%%%%%%%%%%%%%%%%%%%%%%%%%%%%%%%%%%%%%%%%%%%%%%%%%%%%%%%%%%%%%%%%%%%%%%%%%%%%%%%%%%%%%%%%%

\subsection{Subordination and other boxed convolutions}
\label{ssec:subordination_other_boxed_conv}

We provide initial motivations for this section. However, readers unfamiliar with monotone or boolean independence may skip directly to Definition \ref{def:H1H2}. We will primarily concentrate on the scalar-valued setting. In \cite{Lencz2007,Lencz2008} and \cite{Nica2009}, the notion of subordination \cite{biane1998,voiculescu1993} is used  to express (commutative) free and boolean additive convolutions in terms of monotone additive convolution. Recall that the noncommutative distribution $\mu_a$ of a noncommutative random variable $a \in \mathcal A$ can be represented as a moment series
\[
	M_a(z) = \sum_{n\ge 0} \bb E(a^{n+1})z^n = \bb E(a) + \bb E(a^2)z + \bb E(a^3)z^2 +\cdots.
\]
We define its $F$-transform to be 
\begin{align*}
	F_a(z) = z F_a'(z) &:= z \cdot \left(1 + \frac 1 z\cdot M_a\left(\frac 1 z\right)\right)^{-1} \\ 
	& = z - \bb E(a) + \big(\bb E(a)^2 - \bb E(a^2)\big)\frac 1 z + \big(\bb E(a^3) - 2 \bb E(a^2)\bb E(a) + \bb E(a)^3\big) \frac 1 {z^2}+ \cdots.
\end{align*}
Define also the $\eta$-transform
\begin{align*}
	\eta_a(z) = z\eta'_a(z) 
	& := zM_a(z) \cdot (1 + z\cdot M_a(z))^{-1} \\
	& = zM_a(z) F_a'\left(\frac 1 z\right)\\ 
	&= z \Big( 
	\bb E(a) 
	+ \big(\bb E(a^2) - \bb E(a)^2\big)z 
	+ \big(\bb E(a^3) - 2 \bb E(a^2)\bb E(a) + \bb E(a)^3\big) z^2 \\
	&\qquad\  + \big(\bb E(a^4) + 3 \bb E(a)^2 \bb E(a^2) - 2 \bb E(a) \bb E(a^3) - \bb E(a)^4\big)z^3 + \cdots
	\Big).
\end{align*}
As in the rest of the paper, we focus on reduced series, i.e., expansions starting with a constant term. That is why we consider primed objects, $F'_a$ and $\eta'_a$, instead of $F_a$ and $\eta_a$. The two series are in fact very similar as we have:
\[
	F_a'\left(\frac 1 z\right) 
	= \dfrac 1 {1 + z\cdot M_a(z)} 
	=  1 - \dfrac {z\cdot M_a(z)} {1 + z\cdot M_a(z)} 
	= 1 - z \cdot \eta'_a(z).
\]
We can define similarly $F'_\mu$ and $\eta'_\mu$ for any noncommutative distribution $\mu$. Following Lenczewski \cite{Lencz2007}, the monotone and boolean additive convolutions of two distributions $\mu$ and $\nu$ are expressed through the $F$-transform
\begin{align}
	F_{\mu\,  *\,  \nu}(z) 	 &= F_\mu (F_\nu(z)) \\
	F_{\mu\,  \uplus\,  \nu}(z) &= F_\mu(z) + F_\nu(z) - z,
\end{align}
where $*$ denotes monotone additive convolution (Lenczewski writes $\rhd$), and $\uplus$ is boolean additive convolution. Note that this can be reformulated as
\begin{align}
	F'_{\mu\,  *\,  \nu}(z) 
		&= (F'_\mu \star F'_\nu)(z) 
		  := F'_\nu(z) \cdot F'_\mu(zF'_\nu(z)) \\
	\eta'_{\mu\,  \uplus\,  \nu}(z) 
		&= \eta'_\mu(z) + \eta'_\nu(z).
\end{align}
Now, following Lenczewski \cite{Lencz2008}, the same holds for boolean and monotone multiplicative convolutions, this time with the help of the $\eta$-transform
\begin{align}
	\eta_{\mu\,  \circlearrowright\,  \nu}(z) 
		&= \eta_\mu (\eta_\nu(z)) \\ 
	\eta'_{\mu\,  \smstarbox\,  \nu}(z) 
		&= \eta'_\mu(z)\eta'_\nu(z),
\end{align}
where $\circlearrowright$ denotes monotone multiplicative convolution and $\starbox$ stands for boolean multiplicative convolution (not to be confused with the boxed convolution $\!\boxcon$). The first equation can again be reformulated using the star product $\star$
\begin{equation}
	\eta'_{\mu\,  \circlearrowright\,  \nu}(z) 
		= (\eta'_\mu \star \eta'_\nu)(z) = \eta'_\nu(z) \cdot \eta'_\mu(z\eta'_\nu(z)).
\end{equation}
\\
Moreover, moving to free additive and free multiplicative convolution, there exist so-called \textit{subordination} distributions, $\mu\boxsemiplus \nu$ and $\mu \boxsemitimes \nu$, such that we have the following decompositions
\begin{align}
	\mu \boxplus \nu 	&= \mu * (\mu \boxsemiplus \nu) = \nu * (\nu \boxsemiplus \mu) \\
	\mu \boxplus \nu 	&= (\mu \boxsemiplus \nu) \uplus (\nu \boxsemiplus \mu)  \\
	\mu \boxtimes \nu 	&= \mu \circlearrowright (\nu \boxsemitimes \mu) 
					   = \nu \circlearrowright (\mu \boxsemitimes \nu) \\ 
	\mu \boxtimes \nu 	&= (\mu \boxsemitimes \nu) \starbox (\nu \boxsemitimes \mu),
\end{align}
where $\boxplus$ denotes free additive convolution and $\boxtimes$ stands for free multiplicative convolution. In terms of $F'$-transforms and $\eta'$-transforms, this amounts to
\begin{align}
	F'_{\mu\,  \boxplus\,  \nu} 
		&= F'_\mu \star F'_{\mu\, \smboxsemiplus\, \nu} 
		  = F'_\nu \star F'_{\nu\,  \smboxsemiplus\,  \mu} \\ 
	F'_{\mu\,  \boxplus\,  \nu} 
		&= F'_{\nu\,  \smboxsemiplus\,  \mu} + F'_{\mu\,  \smboxsemiplus\,  \nu} - 1 \\ 
	\eta'_{\mu\,  \boxtimes\,  \nu} 
		&= \eta'_\mu \star \eta'_{\mu\,  \smboxsemitimes\,  \nu} 
		  = \eta'_\nu \star \eta'_{\nu\,  \smboxsemitimes\,  \mu} \label{eq:boxtimes_monot}\\ 
	\eta'_{\mu\,  \boxtimes\,  \nu} 
		&= \eta'_{\nu\,  \smboxsemitimes\,  \mu} \cdot \eta'_{\mu\,  \smboxsemitimes\,  \nu}. \label{eq:boxtimes_bool}
\end{align}

Knowing that the boxed convolution $\!\boxcon$ expresses free multiplicative convolution when applied to cumulant series, we can compare Proposition \ref{prop:eqs_boxcon_subord} with Equations \eqref{eq:boxtimes_monot}. Although we are dealing with cumulant series, or $R$-transforms, and not $\eta$-transforms, it still seems natural to deduce two series that have interesting properties and that can be thought of as operator-valued multiplicative subordination. In fact, we will see that, for example, Equation \eqref{eq:boxtimes_bool} still holds in our setting.

\begin{defn}
\label{def:H1H2}
For $F,G\in G_B^\inv$, define
\begin{align}
\label{eq:def_H1H2}
\begin{aligned}
	H_1(F,G) &:= S^l(G\rhd_r S^l(F)) \\ 
	H_2(F,G) &:= S^r(F\rhd_l S^r(G)).
\end{aligned}
\end{align}
\end{defn}

The interpretation of the following identities \eqref{eq:H1H2} as operator-valued multiplicative subordination becomes compelling in light of the fact that the two products, $\star_l $ and $\star_r $, (see \eqref{eq:starproducts} in Definition \ref{def:star_l_star_r}) can be interpreted as two ways of defining operator-valued additive monotone convolution (see \cite[Definition 4.4]{HasebeSaigo2014}).

\begin{lem}
\label{lem:boxcon_H_1_H_2}
Let $F,G\in G_B^\mathrm{inv}$ and $H_1 = H_1(F,G)$, $H_2 = H_2(F,G)$ as in Definition \ref{def:H1H2}. Then 
\begin{align}
\label{eq:H1H2}
\begin{aligned}
	F \boxcon G &= G \star_l H_1 \\ 
	F \boxcon G &= F \star_r H_2 \\ 
	F \boxcon G &= H_1 \cdot H_2, 
\end{aligned}
{}\end{align}
and 
\begin{align}
	H_1 = H_2 \rhd_r F \label{eq:H1_is_H2_rhdr_F}\\ 
	H_2 = H_1 \rhd_l G. \label{eq:H2_is_H1_rhdl_G}
\end{align}
Moreover, 
\begin{align}
\label{eq:GcircH1_is_FcircH2}
	(GI)\circ(IH_1) = (IF)\circ(H_2I) ,
\end{align}
which can be reformulated as follows: for all $L\in G_B^\mathrm{inv}$,
\[
	H_1 \rhd_l (G \rhd_r L) = H_2 \rhd_r (F \rhd_l L).
\]
\end{lem}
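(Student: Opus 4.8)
The plan is to get the two identities in \eqref{eq:H1H2} involving $\star_l$ and $\star_r$ for free, then to prove the twist identity \eqref{eq:GcircH1_is_FcircH2} as the central computation, and finally to deduce the relations \eqref{eq:H1_is_H2_rhdr_F}, \eqref{eq:H2_is_H1_rhdl_G} and the product formula $F\boxcon G=H_1\cdot H_2$ by cancellation in the group $(G_B^\inv,\cdot)$. To begin with, unwinding Definition \ref{def:H1H2}, the identities $F\boxcon G=G\star_l H_1$ and $F\boxcon G=F\star_r H_2$ are exactly the two statements of Proposition \ref{prop:eqs_boxcon_subord}, so nothing new is needed there.

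Before the main step I would record two elementary composition identities and two fixed-point forms. From $S^l(F)=F^{-1}\circ(IF)^{\circ-1}$ in Proposition \ref{prop:inverse_maps_S} one gets $S^l(F)^{-1}=F\circ(IF)^{\circ-1}$, hence $S^l(F)^{-1}\circ(IF)=F$; symmetrically $S^r(G)^{-1}\circ(GI)=G$. Writing $A:=S^l(F)\circ(GI)$ and $B:=S^r(G)\circ(IF)$, so that $H_1=S^l(A)$ and $H_2=S^r(B)$ by Definition \ref{def:H1H2}, the fixed-point equations of Lemma \ref{lem:fixed_point_S_f} applied to $A$ and $B$, together with $A^{-1}=S^l(F)^{-1}\circ(GI)$ and $B^{-1}=S^r(G)^{-1}\circ(IF)$, yield
\[
	H_1=S^l(F)^{-1}\circ\big((GI)\circ(IH_1)\big),\qquad H_2=S^r(G)^{-1}\circ\big((IF)\circ(H_2I)\big).
\]

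The heart of the argument is \eqref{eq:GcircH1_is_FcircH2}, which I would prove by comparing compositional inverses. Using the defining relations of the S-transforms (Definition \ref{def:S_transform}) --- in particular $(IH_1)^{\circ-1}=IA$, $(H_2I)^{\circ-1}=BI$, $IS^l(F)=(IF)^{\circ-1}$ and $S^r(G)I=(GI)^{\circ-1}$ --- together with right-distributivity (Lemma \ref{lem:rightaction}) and the reductions $X\circ X^{\circ-1}=I$ and $X\circ I=X$, a short computation reduces both inverses to one and the same symmetric series,
\[
	\big((GI)\circ(IH_1)\big)^{\circ-1}=S^r(G)\cdot I\cdot S^l(F)=\big((IF)\circ(H_2I)\big)^{\circ-1},
\]
which gives \eqref{eq:GcircH1_is_FcircH2}. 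The reformulation then follows immediately, since associativity of composition and the definitions of $\rhd_l,\rhd_r$ give $L\circ\big((GI)\circ(IH_1)\big)=H_1\rhd_l(G\rhd_r L)$ and $L\circ\big((IF)\circ(H_2I)\big)=H_2\rhd_r(F\rhd_l L)$. I expect this compositional-inverse computation to be the only delicate step.

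Finally, feeding \eqref{eq:GcircH1_is_FcircH2} into the two fixed-point forms and using the elementary composition identities recorded above gives
\[
	H_1=S^l(F)^{-1}\circ\big((IF)\circ(H_2I)\big)=\big(S^l(F)^{-1}\circ(IF)\big)\circ(H_2I)=F\circ(H_2I)=H_2\rhd_r F,
\]
and symmetrically $H_2=G\circ(IH_1)=H_1\rhd_l G$, establishing \eqref{eq:H1_is_H2_rhdr_F} and \eqref{eq:H2_is_H1_rhdl_G}. The product formula $F\boxcon G=H_1\cdot H_2$ then drops out of the already-proven $F\boxcon G=G\star_l H_1=H_1\cdot\big(G\circ(IH_1)\big)$ by inserting $G\circ(IH_1)=H_2$; everything after the twist identity is bookkeeping with right-distributivity and cancellation in $(G_B^\inv,\cdot)$.
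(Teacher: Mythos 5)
Your proof is correct, but it inverts the logical architecture of the paper's argument, so the two are worth contrasting. The paper takes \eqref{eq:H1_is_H2_rhdr_F} as the central computation: it establishes $H_2 \rhd_r F = H_1$ by a direct chain of manipulations (repeatedly peeling off compositional inverses and using right-distributivity), then obtains $F \boxcon G = H_1 \cdot H_2$ from $G \star_l H_1 = H_1 \cdot (H_1 \rhd_l G)$, and only at the very end derives the twist identity \eqref{eq:GcircH1_is_FcircH2} as a short consequence of the already-proven relations, via $(GI)\circ(IH_1) = H_2 I H_1 = (IF)\circ(H_2I)$. You instead make \eqref{eq:GcircH1_is_FcircH2} the pivot, proving it by showing that both sides have the same compositional inverse $S^r(G)\cdot I\cdot S^l(F)$ --- a computation I have checked and which goes through exactly as you describe, using $(IH_1)^{\circ-1}=I\cdot(G\rhd_r S^l(F))$, $(H_2I)^{\circ-1}=(F\rhd_l S^r(G))\cdot I$, and the cancellations $(GI)\circ(GI)^{\circ-1}=I=(IF)\circ(IF)^{\circ-1}$ --- and you then recover \eqref{eq:H1_is_H2_rhdr_F} and \eqref{eq:H2_is_H1_rhdl_G} by feeding the twist identity into the fixed-point forms $H_1=S^l(F)^{-1}\circ\big((GI)\circ(IH_1)\big)$ and $H_2=S^r(G)^{-1}\circ\big((IF)\circ(H_2I)\big)$ coming from Lemma \ref{lem:fixed_point_S_f}. (The auxiliary facts you use, namely $(X\circ Y)^{-1}=X^{-1}\circ Y$ and $S^l(F)^{-1}\circ(IF)=F$, are immediate from right-distributivity, so there is no gap there.) What your route buys is a conceptually satisfying explanation of \emph{why} the twist identity holds --- both sides invert to the manifestly symmetric series $S^r(G)\cdot I\cdot S^l(F)$ --- and it exhibits the $\rhd$-relations as consequences of a single symmetric fact rather than as two parallel ad hoc computations; the paper's route is somewhat shorter on the page but leaves the twist identity looking like an afterthought. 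Both proofs are complete and rely on the same underlying toolbox (Lemma \ref{lem:rightaction}, Definition \ref{def:S_transform}, Proposition \ref{prop:eqs_boxcon_subord}).
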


\begin{proof}
The first two equalities of \eqref{eq:H1H2} are just a restatement of Proposition \ref{prop:eqs_boxcon_subord}. Here is, however, how we can derive \eqref{eq:H1_is_H2_rhdr_F}:
\begin{align*}
	H_2 \rhd_r F &= S^r(F\rhd_l S^r(G)) \rhd_r F \\ 
	&= F \circ ((F\rhd_l S^r(G))I)^{\circ-1} \\ 
	&= F \circ (S^r(G)\circ(IF) I)^{\circ-1} \\ 
	&= F \circ (IF)^{\circ-1} \circ (S^r(G) (IF)^{\circ-1})^{\circ-1} \\ 
	&= S^l(F) \circ (S^r(G)IS^l(F))^{\circ-1} \\ 
	&= S^l(F) \circ ((GI)^{\circ-1}S^l(F))^{\circ-1} \\ 
	&= S^l(F) \circ (GI) \circ (I S^l(F)\circ(GI))^{\circ-1} \\ 
	&= S^l(S^l(F) \circ (GI)) \\ 
	&= S^l(G\rhd_r S^l(F)) \\ 
	&= H_1.
\end{align*}
Equation \eqref{eq:H2_is_H1_rhdl_G} is derived similarly. Then, the third equality of \eqref{eq:H1H2} 
\[
	F \boxcon G = G \star_l H_1 = H_1 \cdot (H_1 \rhd_l G) = H_1 \cdot H_2.
\]	
Finally, we have
\[
	(GI)\circ(IH_1) = G \circ(IH_1)I H_1 = H_2IH_1 = H_2IF\circ(H_2I) = (IF)\circ(H_2I).
\]
\end{proof}

\begin{rmk}
\label{rmk:connection_part_I}
From this purely algebraic constructions of the subordination series $H_1(F,G)$ and $H_2(F,G)$, one can find back the same series that were constructed from a combinatorial point of view in \cite{part1}. In this paper, four boxed convolution products are defined -- combinatorially, by using planar binary trees at the level of the $n$-linear components --  for $f,g \in G_B^{I,l}$: 
\begin{equation}
\label{eq:four_products_part1}
	f \boxconv g \in G_B^{I,l},
	\qquad f \boxconvline g \in G_B^\dif,
	\qquad f \boxconvred g \in G_B^{I,l},
	\qquad f\boxconvredred g \in G_B^\inv.
\end{equation}
They are shown to satisfy the four relations (\cite[Lemma 3.18 and Lemma 3.19]{part1})
\begin{align}
\label{boxconvolrel}
\begin{array}{ll}
	f \boxconv g =&\!\!\!\!\! g \circ (f \boxconvred g) \\[0.2cm]
	f \boxconv g =&\!\!\!\!\! (f \boxconvred g) \cdot (f \boxconvredred g)
\end{array}
\quad \text{and} \quad 
\begin{array}{ll}
	g \boxconvline f =&\!\!\!\!\! f \circ ((f\boxconvredred g)I) \\[0.2cm]
	g \boxconvline f =&\!\!\!\!\! (f \boxconvredred g) \cdot (f \boxconvred g)
\end{array}.
\end{align}
Let us assume that $F,G\in G_B^\inv$ are such that $f=IF$ and $g=IG$. We claim that the foregoing four products \eqref{eq:four_products_part1} and the four equalities in  \eqref{boxconvolrel} can be described via $H_1 = H_1(F,G)$, $H_2 = H_2(F,G)$, and Lemma \ref{lem:boxcon_H_1_H_2}. More precisely, we claim that
\begin{align*}
f \boxconv g &= I H_1 H_2 = I(F\boxcon G) & f \boxconvred g &= I H_1 \\ 
f \boxconvline g &= H_2 I H_1 & f \boxconvredred g &= H_2,
\end{align*}
and the four equalities \eqref{boxconvolrel} can now be read as
\begin{align*}
\begin{array}{ll}
	I(F\boxcon G) =&\!\!\!\!\! (IG)\circ(IH_1) \\[0.2cm]
	I(F\boxcon G) =&\!\!\!\!\! (IH_1)\cdot H_2
\end{array}
\quad \text{and} \quad 
\begin{array}{ll}
	H_2IH_1 =&\!\!\!\!\! (IF) \circ (H_2I) \\[0.2cm]
	H_2IH_1 =&\!\!\!\!\! H_2\cdot(IH_1)
\end{array}.
\end{align*}
Note that some of them become trivial. To see why the claim is true, recall from Remark \ref{rmk:def_boxconv} that 
\[
	f \boxconv g = I(F \boxcon G).
\]
Writing $f\boxconvred g = IH_i$ and $f\boxconvredred g = H_{ii}$ with $H_i, H_{ii} \in G_B^\inv$, the two equalities on the left of \eqref{boxconvolrel} can be rewritten as
\begin{align*}
	F\boxcon G &= G \star_l H_i \\
	F\boxcon G &= H_i \cdot H_{ii}.
\end{align*}
Comparing these with Lemma \ref{lem:boxcon_H_1_H_2}, we deduce that $H_i = H_1(F,G)$ and $H_{ii} = H_2(F,G)$, and the rest follows.

We therefore have a correspondence between several rather differently defined convolution-type products. On the one hand, the product series \eqref{eq:four_products_part1} are defined "locally", i.e., at the level of their $n$-linear components using combinatorial tools for planar binary trees. On the other hand, the series $F\boxcon G$, $H_1$, $H_2$ are defined at a "global" level, that is, very algebraically, from the set $\Mult[[B]]$ equipped with products $\cdot$ and $\circ$.
\end{rmk}

\begin{rmk}
\label{rmk:newformulas}
\begin{enumerate}

\item[i)]
From \eqref{eq:H1_is_H2_rhdr_F} and \eqref{eq:H2_is_H1_rhdl_G}, we can deduce expressions of $H_1$ and $H_2$ as infinite towers:
\begin{align*}
	H_1 &= (((\cdots \rhd_l G) \rhd_r F) \rhd_l G) \rhd_r F \\ 
	H_2 &= (((\cdots \rhd_r F) \rhd_l G) \rhd_r F) \rhd_l G.
\end{align*}
Note that these towers converge because only the last $n$ terms ($F$ or $G$) influence the term of degree $n$ of the tower.

\item[ii)]
The third identity in \eqref{eq:H1H2}
$$
	F \boxcon G = H_1(F,G) \cdot H_2(F,G)
$$ 
can be seen as the operator-valued analog of \cite[Identity 18.30, p.298]{nica_speicher_book}. We note that one could also consider the identity $f\boxconv g = (f\boxconvred g) \cdot (f \boxconvredred g)$ as the operator-valued analog, but Remark \ref{rmk:connection_part_I} explains equivalence between these identities.

\item[iii)]
Going back to Remark \ref{rmk:commutative}, we can deduce from \eqref{eq:H1H2} the following computational expressions for random variables $a,b \in \mathcal{A}$ with scalar-valued R-transforms, $R_a(z)=zR'_a(z)$, $R_b(z)=zR'_b(z)$ in $z \mathbb{K}\langle\langle z \rangle\rangle$. Recall that $zS(R'_a)(z)=R_a(z)^{\circ-1}$. First, recalling that in the commutative case we have that $\gg:=\rhd_l=\rhd_r$, we observe that
%
%\textcolor{orange}{\bf TR: It's not completely right to write $R_a\gg S(R_b)$ because $R_a\in z\bb K\langle\langle z \rangle\rangle$ and $\gg$ takes arguments in $1+z\bb K\langle\langle z \rangle\rangle$. We can write either $S(R_b)\circ R_a$, or $\mathcal R_a\gg S(R_b)$ where $R_a = z \mathcal R_a$.}
%
$$
	H_1(R'_a,R'_b)(z)=S(R'_a \gg S(R'_b))(z) 
	\qquad 
	\text{and}
	\qquad
	H_2(R'_a,R'_b)(z)=H_1(R'_b,R'_a) .
$$  
The first two identities of \eqref{eq:H1H2} then say for $R'_a(z),R'_b(z) \in 1+z\bb K\langle\langle z \rangle\rangle$ that
\begin{align}
\label{eq:new1}
\begin{aligned}
	(R'_a \boxcon  R'_b)(z) 
		&= S(R'_b \gg S(R'_a))(z) \, R'_b \circ (zS(R'_b \gg S(R'_a))(z))\\
		&= S(R'_a \gg S(R'_b))(z) \, R'_a \circ (zS(R'_a \gg S(R'_b))(z)).
\end{aligned}
\end{align}
The third identity of \eqref{eq:H1H2} implies that 
\begin{align}
\label{eq:new2}
	(R'_a \boxcon  R'_b)(z) = S(R'_a \gg S(R'_b))(z)\, S(R'_b \gg S(R'_a))(z), 
\end{align}
which corresponds to \cite[Identity 18.30, p.298]{nica_speicher_book}, with the incomplete boxed convolution defined at the level of $1+z\bb K\langle\langle z \rangle\rangle$ by $H_1(R'_a,R'_b) = R'_a \check{\boxcon} R'_b$ and $H_2(R'_a,R'_b) = R'_b \check{\boxcon} R'_a$.
\end{enumerate}

\end{rmk}

%%%%%%%%%%%%%%%%%%%%%%%%%%%%%%%%%%%%%%%%%%%%%%%%%%%%%%%%%%%%%%%%%%%%%%%%%%%%%%%%%%%%%%%%%%%%%%%%%%%

\subsection{Operator-valued free moment-cumulant relations}
\label{ssec:appear_naturally}

In this section, we start by considering the moment-cumulant relations in operator-valued free probability. We discover that the new group laws on $G_B^\inv$ presented earlier arise naturally. Recall (see for instance Speicher \cite{speicher_short_proof}) that moment-cumulant relations can be written
\begin{align*}
	M_a I 
	&= (K_a I) \circ (I + I M_a I) \\
	I M_a 
	&= (I K_a) \circ (I + I M_a I),
\end{align*}
where $M_a,K_a\in G_B^\inv$ are respectively the moment and cumulant series of a random variable $a \in \mathcal{A}$, see Definition \ref{def:op_valued_free_proba}. Using the left/right $I$-multiplication maps, 
$\lambda(F)=IF \in G_B^\dif$ respectively $\rho(F)=FI \in  G_B^\dif$, for $F \in G_B^\inv$,  allows us to write the moment-cumulant relations more compactly
\begin{align*}
	\rho(M_a) &= \rho(K_a)\circ \rho(1 + I M_a) \\
	\lambda(M_a) &= \lambda(K_a)\circ \lambda(1 + M_a I).
\end{align*}

We have seen that the composition in $G_B^\dif$ can be transported back to $G_B^\inv$ to define the two group laws $\star_l$ and $\star_r$ on $G_B^\inv$. In this way, $\lambda : (G_B^\inv, \star_l) \to (G_B^\dif, \circ)$ and $\rho : (G_B^\inv, \star_r) \to (G_B^\dif, \circ)$ are group morphisms, i.e., for $F,G\in G_B^\inv$,
\begin{align*}
	(IF)\circ(IG) &= I(F \star_l G) \\
	(FI)\circ(GI) &= (F \star_r G)I.
\end{align*}
Note that this intriguing observation was made in Frabetti's work \cite{frabetti} (see also \cite[Proposition 3.9]{T-transf}).
We can again rewrite the moment-cumulant relations:
\begin{align}
	M_a &= K_a \star_r (1 + I M_a) \label{eq:mom_cum_str}\\
	M_a &= K_a \star_l (1 + M_a I). \label{eq:mom_cum_stl}
\end{align}
We can also write $1+IM_a = M_a \rhd_l (1+I)$ and $1+M_aI = M_a \rhd_r (1+I)$, leading to 
\begin{align*}
	M_a &= K_a \star_r (M_a \rhd_l (1+I)) \\ 
	M_a &= K_a \star_l (M_a \rhd_r (1+I)).
\end{align*}
Note that these expressions resemble the ones of Proposition \ref{prop:rhd_from_rhd_l_and_rhd_r}. Indeed, isolating the $K_a$ by $\star_r$-/$\star_l$-multiplying from the left and right by $S^{r/l}(K_a)$ respectively $S^{r/l}(M_a)$ gives
\begin{align*}
	S^r(K_a) &= (M_a \rhd_l (1+I)) \star_r S^r(M_a) \\
	S^l(K_a) &= (M_a \rhd_r (1+I)) \star_l S^l(M_a),
\end{align*}
and then using Proposition \ref{prop:rhd_from_rhd_l_and_rhd_r} allows us to write
\begin{align}
	S^r(K_a) &= S^r(M_a) \sqdot' (1+I) \label{eq:Sr_K_Sr_M_sqdot}\\
	S^l(K_a) &= (1+I) \sqdot S^l(M_a).	\label{eq:Sl_K_Sl_M_sqdot}
\end{align}

\begin{rmk}
	We see the products $\sqdot$ and $\sqdot'$ appear. It is interesting because these group laws were introduced in light of the twisted multiplication formula \eqref{eq:twisted_factorisation}. Here, however, they are derived from trying to simplify the operator-valued free moment-cumulant relation. 

	Although, they are not really essential in these equations, as implied by the following lemma, which states that multiplication by $1+I$ with products $\sqdot$ and $\sqdot'$ is rather simple.
\end{rmk}

\begin{lem}
\label{lem:1pI_sqdot_commute}
For $F\in G_B^\mathrm{inv}$,
\begin{align*}
	F \sqdot (1+I) = (1+I) \sqdot F = (1+I) \cdot F \\ 
	F \sqdot' (1+I) = (1+I) \sqdot' F = F\cdot (1+I).
\end{align*}
\end{lem}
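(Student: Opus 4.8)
The plan is to verify the four claimed identities by direct computation, unwinding the definitions of $\sqdot$ and $\sqdot'$ from \eqref{eq:squarebox} and \eqref{eq:squareboxopp}, and exploiting the special structure of the element $1+I$. The key observation I would isolate first is how $1+I$ behaves under the conjugation operation $F \mapsto F^{-1}IF$ appearing inside these products. Indeed, $(1+I)^{-1} I (1+I)$ should simplify dramatically: since $1+I$ commutes appropriately with $I$ in the composition-multiplication interplay, I expect $(1+I)^{-1}I(1+I) = I$, so that the twist in $\sqdot$ trivialises whenever the right argument is $1+I$.

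\textbf{Computation of the first line.} For $F \sqdot (1+I)$, I would substitute $G = 1+I$ into \eqref{eq:squarebox}:
\[
	F \sqdot (1+I) = (1+I) \cdot F \circ ((1+I)^{-1} I (1+I)).
\]
Using the simplification $(1+I)^{-1}I(1+I) = I$, this collapses to $(1+I) \cdot F \circ I = (1+I)\cdot F$, since $I$ is the composition unit. For $(1+I) \sqdot F$, I would substitute $F \mapsto 1+I$ as the left argument of \eqref{eq:squarebox}, giving $F \cdot (1+I)\circ(F^{-1}IF)$. The point here is that $(1+I) \circ H = 1 + H$ for any $H \in G_B^\dif$ (because $(1+I)$ has components only in degrees $0$ and $1$, with the degree-one part being $I$), so $(1+I)\circ(F^{-1}IF) = 1 + F^{-1}IF$. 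Then $F \cdot (1 + F^{-1}IF) = F + F\cdot F^{-1}IF = F + IF$; and I would check that $F + IF = (1+I)\cdot F$ by expanding the multiplication product \eqref{multipl}. This establishes both equalities on the first line.

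\textbf{The second line and the main obstacle.} The identities for $\sqdot'$ follow by the symmetric computation using \eqref{eq:squareboxopp}, $F \sqdot' G = F\circ(GIG^{-1})\cdot G$, where I would again use $(1+I)I(1+I)^{-1} = I$ and $(1+I)\circ H = 1+H$, landing on $F\cdot(1+I)$. Alternatively, one could invoke the isomorphism $\sigma : (G_B^\inv,\sqdot) \to (G_B^\inv,\sqdot')$ with $\sigma(F) = F^{-1}$ together with $\sigma(1+I) = (1+I)^{-1}$, but this route introduces the inverse of $1+I$ and seems less clean than a direct repetition. The main technical point to get right — and the only place requiring genuine care rather than bookkeeping — is the conjugation identity $(1+I)^{-1}I(1+I) = I$ (equivalently $I\cdot(1+I) = (1+I)\cdot I$, i.e.~that $I$ commutes multiplicatively with $1+I$); once this and the substitution rule $(1+I)\circ H = 1+H$ are in hand, everything reduces to a short expansion of \eqref{multipl}, so I do not anticipate any serious obstruction.
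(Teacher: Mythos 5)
Your proposal is correct and follows essentially the same route as the paper: the paper likewise computes $F \sqdot (1+I) = (1+I)\cdot F\circ((1+I)^{-1}I(1+I)) = (1+I)\cdot F$ using that $I$ and $1+I$ commute multiplicatively, and $(1+I)\sqdot F = F\cdot(1+F^{-1}IF) = F + IF$, handling the $\sqdot'$ line by the symmetric computation. Your two isolated key facts, $(1+I)^{-1}I(1+I)=I$ and $(1+I)\circ H = 1+H$, are exactly the ones the paper relies on.
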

\begin{proof}
We have
\[
	F \sqdot (1+I) = (1+I) \cdot F \circ ((1+I)^{-1}I(1+I)) = (1+I) \cdot F
\]
because $1+I$ and $I$ commute, and
\[
	(1+I)\sqdot F = F \cdot (1+I)\circ (F^{-1}IF) = F \cdot (1+ F^{-1}IF) = F + IF.
\]
A similar derivation can be done for the other pair of equalities.
\end{proof}

\begin{rmk}
The same holds for any series $A = \sum_{n\ge 0} a_n I^n$, $a_n\in \bb K$, $a_0\neq 0$. For any $F\in G_B^\inv$, we have that $A\sqdot F = F\sqdot A = A \cdot F$ and $A\sqdot' F = F\sqdot' A = F \cdot A$.
\end{rmk}

From Lemma \ref{lem:1pI_sqdot_commute}, we can again transform equations \eqref{eq:Sr_K_Sr_M_sqdot} and \eqref{eq:Sl_K_Sl_M_sqdot} to obtain
\begin{align}
\label{eq:inversecumulantsmoments}
\begin{aligned}
	S^r(K_a) &= S^r(M_a) \cdot (1+I) \\
	S^l(K_a) &= (1+I) \cdot S^l(M_a).
\end{aligned}
\end{align}
Hence, we see that the S-transforms of $K_a$ and $M_a$, i.e., cumulants versus moments, are very similar -- as one would expect from the scalar-valued analog \eqref{scalarmoments} \cite[see (16.31), p.270]{nica_speicher_book}.
Also, note that the first equality in \eqref{eq:inversecumulantsmoments} is used in \cite{speicher_short_proof} to define the S-transform $S^l(K_a)$.

%%%%%%%%%%%%%%%%%%%%%%%%%%%%%%%%%%%%%%%%%%%%%%%%%%%%%%%%%%%%%%%%%%%%%%%%%%%%%%%%%%%%%%%%%%%%%%%%%%%

\subsection{More relations between the post-groups}
\label{ssec:more_relations}

Proposition \ref{prop:rhd_from_rhd_l_and_rhd_r} points at the fact that the three post-groups $(G_B^\inv, \cdot, \rhd_l)$, $(G_B^\inv, \overline\cdot, \rhd_r)$ and $(G_B^\inv, \cdot, \rhd)$ are somehow related. In this section, we explain three other ways in which these post-groups are related. We note that these relations are not fully understood, and we believe that further studies may lead to a depper understanding of these groups. We start by noting that the previous post-groups can be seen as sub-post-groups of a larger post-group.

\begin{prop}
Consider the group $(\mathcal G, \times) := (G_B^\mathrm{inv}, \overline \cdot)\times (G_B^\mathrm{inv}, \cdot)$, and define on $\mathcal G$ the post-group action
\[{}
	(F,G) \rhd\!\!\!\rhd (H, K) := (H \circ (FIG), K\circ (FIG)).
\]
Then $(\mathcal G, \times, \rhd\!\!\rhd)$ is a post-group.

Moreover, the following maps are injective post-group morphisms (meaning that their images are sub-post-groups of $\mathcal G$, isomorphic to the first post-group):
\begin{itemize}
	\item $(G_B^\mathrm{inv}, \cdot, \rhd_l) \to (\mathcal G, \times, \rhd\!\!\rhd), \qquad F\mapsto (1,F)$,
	\item $(G_B^\mathrm{inv}, \overline\cdot, \rhd_r) \to (\mathcal G, \times, \rhd\!\!\rhd), \qquad F\mapsto (F,1)$,
	\item $(G_B^\mathrm{inv}, \cdot, \rhd) \to (\mathcal G, \times, \rhd\!\!\rhd), \qquad F\mapsto (F^{-1},F)$.
\end{itemize}
\end{prop}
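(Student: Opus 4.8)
The plan is to reduce everything to the right-distributivity of composition over multiplication (Lemma \ref{lem:rightaction}), which is the only structural input needed. The key observation is that for a fixed $g\in G_B^\dif$ the map $R_g : h \mapsto h\circ g$ is an endomorphism of the monoid $(\Mult[[B]],\cdot)$ that sends the unit $1$ to $1$ and the series $I$ to $g$; in particular it preserves multiplicative inverses. I will apply this with $g = FIG$. First I would check that $FIG\in G_B^\dif$ whenever $F,G\in G_B^\inv$: its constant term vanishes since $I_0=0$, and its linear part is $x\mapsto F_0\,x\,G_0$, which lies in $\GL(B)$ because $F_0,G_0\in B^\times$. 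Hence $FIG$ is compositionally invertible, and $R_{FIG}$ restricts to a bijection of $G_B^\inv$ onto itself (precomposition leaves the constant term unchanged).

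For the post-group axioms I proceed in two steps. To see that $L^{\rhd\!\!\rhd}_{(F,G)} : (H,K)\mapsto (H\circ(FIG),\,K\circ(FIG))$ is an automorphism of $(\mathcal G,\times)$, I apply $R_{FIG}$ in each coordinate: right-distributivity gives $(H_2\cdot H_1)\circ(FIG)=(H_2\circ(FIG))\cdot(H_1\circ(FIG))$, and the resulting reversal of factors is exactly what the opposite product $\overline\cdot$ on the first coordinate requires, so $L^{\rhd\!\!\rhd}_{(F,G)}$ is a homomorphism; bijectivity follows from invertibility of $R_{FIG}$, with inverse given by precomposition with $(FIG)^{\circ-1}$. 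For the compatibility $((F,G)*_{\rhd\!\!\rhd}(F',G'))\rhd\!\!\rhd(H,K)=(F,G)\rhd\!\!\rhd((F',G')\rhd\!\!\rhd(H,K))$, both sides precompose $(H,K)$ with a single element of $G_B^\dif$ (using associativity of $\circ$ on the right), so the identity collapses to the single equation
\[
	\tilde F\, I\, \tilde G = (F'IG')\circ(FIG),
\]
where $(\tilde F,\tilde G):=(F,G)*_{\rhd\!\!\rhd}(F',G')$. Computing the Grossman--Larson product gives $\tilde F=(F'\circ(FIG))\cdot F$ and $\tilde G=G\cdot(G'\circ(FIG))$, whereas right-distributivity together with $I\circ(FIG)=FIG$ yields $(F'IG')\circ(FIG)=(F'\circ(FIG))\cdot(FIG)\cdot(G'\circ(FIG))$; since $FIG=F\cdot I\cdot G$, associativity of $\cdot$ makes the two expressions coincide.

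Finally, for the three candidate morphisms the multiplicative compatibility is immediate: for $F\mapsto(1,F)$ and $F\mapsto(F,1)$ it is a direct component check, and for $F\mapsto(F^{-1},F)$ it uses $(F_1F_2)^{-1}=F_2^{-1}F_1^{-1}$ to match the opposite product on the first factor. For the action it suffices to identify the element $FIG$ produced by the image pair: it is $IF$ for $F\mapsto(1,F)$, it is $FI$ for $F\mapsto(F,1)$, and it is $F^{-1}IF$ for $F\mapsto(F^{-1},F)$, reproducing $\rhd_l$, $\rhd_r$, and $\rhd$ respectively. In the third case one also needs $(F_2\circ(F_1^{-1}IF_1))^{-1}=F_2^{-1}\circ(F_1^{-1}IF_1)$, which is once more the statement that $R_{F_1^{-1}IF_1}$ preserves multiplicative inverses. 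Injectivity is read off from a single coordinate, and an injective post-group morphism identifies its source with the sub-post-group given by its image. I do not expect a genuine obstacle here: once $R_{FIG}$ is recognized as a unital multiplicative morphism, every verification reduces to associativity, and the only point demanding care is the bookkeeping of the opposite product $\overline\cdot$ and of the compatibility of inverses with precomposition.
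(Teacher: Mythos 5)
Your proposal is correct and is exactly the verification the paper has in mind: the paper's proof of this proposition is the single line ``All statements follow from simple computations,'' and your argument carries out those computations, correctly reducing everything to the fact that $R_{FIG}:h\mapsto h\circ(FIG)$ is a unital endomorphism of $(\Mult[[B]],\cdot)$ (Lemma \ref{lem:rightaction}) together with the identity $\tilde F I\tilde G=(F'IG')\circ(FIG)$ for the Grossman--Larson product. No gaps; the only cosmetic quibble is that right-distributivity does not itself ``reverse factors'' --- a $\cdot$-homomorphism is automatically an $\overline\cdot$-homomorphism, which is all that is needed in the first coordinate.
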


\begin{proof}
All statements follow from simple computations.
\end{proof}

\begin{rmk}
\begin{enumerate}
	\item Note that it is not surprising that the two post-groups over $G_B^\inv$ can be seen as sub-post-groups of a post-group over $(G_B^\inv)^2$, since the direct product of two post-groups is still a post-group. However, it is surprising that three of them can fit inside a post-group over the product $(G_B^\inv)^2$.
	\item The map $(F,G) \in (\mathcal G, \times) \mapsto FIG \in G_B^\dif$ is a group morphism. Its kernel is 
$$
	\{a_0(1+IH, a_0^{-1}(1+HI)^{-1}) \mid H\in \Mult[[B]], a_0\in \bb K^\times\}.
$$ %It is isomorphic to a post-group over $(\Mult[[B]], +)$, where the post-group action is $F \rhd_I G := G + FIG$.
\end{enumerate}
\end{rmk}

Let us consider, for $n,m\in \bb Z$, the following maps :
\[
\begin{array}{ccccc}
	\Psi_{n,m} 	&: 	& G_B^\inv 	& \to 			& G_B^\inv \\ 
				&	& F 			& \mapsto 	& F\circ (F^nIF^m)^{\circ-1} \\ 
	\Psi'_{n,m} 	&: 	& G_B^\inv 	& \to 			& G_B^\inv \\ 
				&	& F 			& \mapsto 	& F^{-1}\circ (F^nIF^m)^{\circ-1}
\end{array}
\]
Then the following equalities can be verified, for $m, n, p, q \in \bb Z$ :
\begin{align*}
	\Psi_{m,n} \circ \Psi_{p,q} 
	&= \Psi_{m+p,n+q}   &   \Psi_{m,n}\circ \Psi'_{p,q} = \Psi'_{-m+p, -n+q} \\ 
	\Psi'_{m,n} \circ \Psi_{p,q} 
	&= \Psi'_{m+p,n+q}   &   \Psi'_{m,n}\circ \Psi'_{p,q} = \Psi_{-m+p, -n+q}. \\ 
\end{align*}

We get the following proposition as a consequence:

\begin{prop}
The set $\Gamma_\Psi := \{\Psi_{m,n}, m,n\in \bb Z\} \cup \{\Psi'_{m,n}, m,n\in \bb Z\}$ is a subgroup of the group of bijections of $G_B^\mathrm{inv}$. It is isomorphic to a semi-direct product $\bb Z^2 \rtimes \bb Z/2\bb Z$. Its subset $\{\Psi_{m,n}, m,n\in \bb Z\}$ is a subgroup isomorphic to $\bb Z^2$.
\end{prop}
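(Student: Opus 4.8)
The plan is to leverage the four composition identities displayed just above the statement: they already encode the entire group law, so the proof reduces to organising them into a semidirect-product structure and then checking that the parametrisation is faithful.

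First I would record that $\Psi_{0,0}=\id$: indeed $\Psi_{0,0}(F)=F\circ(1\cdot I\cdot 1)^{\circ-1}=F\circ I=F$. The four identities then give closure of $\Gamma_\Psi$ under composition, and specialising them yields inverses inside $\Gamma_\Psi$: from $\Psi_{m,n}\circ\Psi_{-m,-n}=\Psi_{0,0}=\id$ we get $\Psi_{m,n}^{-1}=\Psi_{-m,-n}$, and from $\Psi'_{m,n}\circ\Psi'_{m,n}=\Psi_{0,0}=\id$ we get that each $\Psi'_{m,n}$ is an involution. In particular every element of $\Gamma_\Psi$ has a two-sided inverse lying in $\Gamma_\Psi$, so all these maps are genuine bijections of $G_B^\inv$ and $\Gamma_\Psi$ is a subgroup of $\mathrm{Bij}(G_B^\inv)$.

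Next I would exhibit the isomorphism. Write $\bb Z^2\rtimes\bb Z/2\bb Z$ for the semidirect product in which the nontrivial element of $\bb Z/2\bb Z$ acts on $\bb Z^2$ by negation, so that $(v,\eps)\cdot(w,\delta)=(v+(-1)^\eps w,\ \eps+\delta)$. I would then define
$$\Phi(\Psi_{m,n}):=((m,n),0), \qquad \Phi(\Psi'_{m,n}):=((-m,-n),1),$$
and verify $\Phi(X\circ Y)=\Phi(X)\cdot\Phi(Y)$ by running through the four cases; the primed/unprimed label is the $\bb Z/2\bb Z$-component (which adds modulo $2$), and the index sign flip that occurs exactly when the right-hand factor is primed matches the negation action in the semidirect product. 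Each of the four checks is a one-line substitution of the corresponding displayed identity, and surjectivity is immediate since $(m,n)$ and the label range over all of $\bb Z^2$ and $\{0,1\}$.

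The remaining, and genuinely delicate, point is injectivity of $\Phi$, equivalently that the maps $\Psi_{m,n}$, $\Psi'_{m,n}$ are pairwise distinct. The primed and unprimed families separate at once through the constant term, since $\Psi_{m,n}(F)_0=F_0$ whereas $\Psi'_{m,n}(F)_0=F_0^{-1}$, so choosing $F$ with $F_0^2\neq 1$ distinguishes the two. To separate $\Psi_{m,n}$ from $\Psi_{m',n'}$ I would evaluate on a non-constant test series, e.g.\ $F$ with $F_0=1$ and $F_1(x)=bx$, and read off $m$ and $n$ from the low-degree multilinear components of $\Psi_{m,n}(F)$: in $F^nIF^m$ the exponent $n$ sits to the left and $m$ to the right of the $I$-slot, so the two are recovered independently precisely because $B$ is noncommutative (in the commutative case $F^nIF^m$ depends only on $n+m$, the maps collapse, and $\Gamma_\Psi$ degenerates, consistent with the earlier remarks). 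This evaluation is the main obstacle; once it is in place $\Phi$ is a bijective homomorphism, hence an isomorphism $\Gamma_\Psi\cong\bb Z^2\rtimes\bb Z/2\bb Z$. Finally $\Phi$ carries $\{\Psi_{m,n}\}$ bijectively onto $\bb Z^2\times\{0\}$, which together with $\Psi_{m,n}\circ\Psi_{p,q}=\Psi_{m+p,n+q}$ identifies it as a subgroup isomorphic to $\bb Z^2$.
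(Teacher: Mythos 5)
Your proposal is correct and follows the route the paper intends: the paper gives no written proof beyond the displayed composition table, so deriving closure, inverses ($\Psi_{m,n}^{-1}=\Psi_{-m,-n}$, each $\Psi'_{m,n}$ an involution) and the semidirect-product law case by case from those four identities is exactly the intended argument, and your four homomorphism checks, with the sign flip $\Phi(\Psi'_{m,n})=((-m,-n),1)$ matching the negation action of $\bb Z/2\bb Z$ on $\bb Z^2$, all verify correctly. The one place where you go beyond the paper is the faithfulness of the parametrisation, and you are right that this is the only genuinely delicate point: it is in fact a hypothesis the paper leaves implicit, since for commutative $B$ (in particular $B=\mathbb K$) one has $F^mIF^n=F^{m+n}I$, so $\Psi_{m,n}$ depends only on $m+n$ and $\Gamma_\Psi$ collapses to a quotient of $\bb Z\rtimes\bb Z/2\bb Z$; the proposition as stated really does require $B$ noncommutative. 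Two small refinements would make your separating argument airtight. First, distinguishing $F^mIF^n$ from $F^{m'}IF^{n'}$ is not by itself enough, because $G\mapsto F\circ G$ need not be injective for arbitrary $F$; either choose $F$ with $F_1$ injective (e.g.\ $F_1=\Id$ and the noncommuting element $b$ placed in $F_2$, reading off $m$ and $n$ from the degree-three component of $F^mIF^n$), or compute the low-degree components of $\Psi_{m,n}(F)$ itself. Second, with your choice $F_1(x)=bx$ one should take $b$ invertible and non-central (or at least such that $b(bx-xb)\not\equiv 0$), again so that nothing is lost under $F\circ{-}$. With either fix the evaluation recovers $m$ and $n$ separately exactly as you describe, and the rest of your argument is complete.
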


Moreover, the inverse maps of the group products, $\cdot$, $\star_l$, $\star_r$, $\sqdot$, $\sqdot'\, $, and $\boxcon$, are all among this group:

\begin{prop}
\label{prop:S_l_r_sqdot_related}
We have
\begin{align*}
	\sigma &= \Psi'_{0,0}   &   S^{\tiny\boxcon} &= \Psi'_{1,1} \\ 
	S^l &= \Psi'_{1,0}   &   S^{\sqdot} &= \Psi'_{-1,1} \\ 
	S^r &= \Psi'_{0,1}   &   S^{\sqdot'} &= \Psi'_{1,-1}
\end{align*}
In particular, $\Gamma_\Psi$ is generated by $\sigma, S^l, S^r$, and
\begin{align*}
	S^{\sqdot} = \sigma S^r S^l = S^l S^r \sigma \\ 
	S^{\sqdot'} = \sigma S^l S^r = S^r S^l \sigma \\ 
	S^{\tiny\boxcon} = S^r \sigma S^l = S^l \sigma S^r 
\end{align*}
\end{prop}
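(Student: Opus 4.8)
The plan is to prove the proposition in two stages: first establish the six displayed equalities by direct substitution into the definition of $\Psi'_{n,m}$, and then deduce both the generation claim and the three composition identities purely from the multiplication table of $\Gamma_\Psi$ recorded immediately above the statement.

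For the first stage I would unfold $\Psi'_{n,m}(F) = F^{-1}\circ(F^nIF^m)^{\circ-1}$ at each prescribed index pair and compare the outcome with the explicit formulas already at hand: those for $\sigma, S^l, S^r, S^{\sqdot}, S^{\sqdot'}$ from Proposition \ref{prop:inverse_maps_S}, and that for $S^{\tiny\boxcon}$ from the preceding proposition. Every comparison reduces to reading off the middle factor $F^nIF^m$. For example, $(n,m)=(0,0)$ gives $F^0IF^0=I$, so $\Psi'_{0,0}(F)=F^{-1}\circ I=F^{-1}=\sigma(F)$; the pair $(-1,1)$ gives $F^{-1}IF$, matching $S^{\sqdot}$; and $(1,1)$ gives $FIF$, matching $S^{\tiny\boxcon}$. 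The remaining cases are identical in spirit, so this stage needs no genuine computation.

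For the second stage I would feed the six identifications into the four composition rules stated above the proposition. To obtain generation, I compute $\sigma\circ S^l$ and $\sigma\circ S^r$ via the rule $\Psi'_{m,n}\circ\Psi'_{p,q}=\Psi_{-m+p,-n+q}$; these produce the two maps $\Psi_{1,0}$ and $\Psi_{0,1}$ (in one order or the other), which generate the $\bb Z^2$-subgroup $\{\Psi_{m,n}\}$ since $\Psi_{m,n}=\Psi_{1,0}^{\circ m}\circ\Psi_{0,1}^{\circ n}$. Composing back with $\sigma$ through $\Psi'_{m,n}=\sigma\circ\Psi_{m,n}$ then recovers every primed element, so $\{\sigma,S^l,S^r\}$ generates all of $\Gamma_\Psi$. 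For each of the three composition identities the right-hand side is a composite of three primed maps; applying $\Psi'\circ\Psi'=\Psi$ once and then $\Psi'\circ\Psi=\Psi'$ collapses it to a single $\Psi'_{a,b}$, and I check that $(a,b)$ matches the index pair of the left-hand side (for instance both $\sigma S^rS^l$ and $S^lS^r\sigma$ collapse to $\Psi'_{-1,1}=S^{\sqdot}$, and similarly for $S^{\sqdot'}$ and $S^{\tiny\boxcon}$).

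The whole argument is mechanical; the one place demanding care is the sign bookkeeping in the multiplication table, where composing two primed maps negates the left index pair — precisely the $\bb Z/2\bb Z$ action on $\bb Z^2$ in the semidirect-product description. I would therefore fix a consistent convention for the index pairs attached to $S^l$ and $S^r$ at the outset and track the signs through each of the three reductions, since a single transposition of indices there is the only realistic source of error.
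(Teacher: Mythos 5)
Your approach is exactly the intended one: the paper gives no separate proof, because the definition of $\Psi'_{n,m}$ and the four composition rules recorded just above the proposition are set up precisely so that everything follows by the mechanical verification you describe. The one point you flagged as the "realistic source of error" is in fact realised in the printed statement: reading the definition literally, $S^l(F)=F^{-1}\circ(IF)^{\circ-1}=F^{-1}\circ(F^0IF^1)^{\circ-1}$ gives $S^l=\Psi'_{0,1}$ and likewise $S^r=\Psi'_{1,0}$, i.e.\ the transpose of the index pairs displayed in the proposition, and it is only with this corrected assignment that the three composition identities come out right (e.g.\ $\sigma S^rS^l=\Psi'_{0,0}\circ\Psi'_{1,0}\circ\Psi'_{0,1}=\Psi'_{0,0}\circ\Psi_{-1,1}=\Psi'_{-1,1}=S^{\sqdot}$, whereas the printed assignment would yield $\Psi'_{1,-1}=S^{\sqdot'}$). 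So when you "fix a consistent convention at the outset," fix it by actually computing the $S^l$ and $S^r$ cases from Proposition \ref{prop:inverse_maps_S} rather than copying the displayed table; with that done, your two-stage argument is complete and correct.
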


\begin{rmk}
\label{rmk:proof2} 
At this point, we may return to Proposition \ref{prop:zetaboxconvol} and present another proof using the last proposition. Indeed, it implies that
\[
	S^{\sqdot}(1+I) = S^lS^r((1+I)^{-1}) = S^l(\zeta).
\]
From \eqref{eq:Sl_K_Sl_M_sqdot} and Lemma \ref{lem:1pI_sqdot_commute},
\[
	S^l(K_a) = (1+I) \sqdot S^l(M_a) = S^l(M_a) \sqdot (1+I).
\]
Therefore
\[
	S^l(M_a) = S^l(K_a) \sqdot S^{\sqdot}(1+I) = S^l(K_a) \sqdot S^l(\zeta) = S^l(K_a \boxcon \zeta),
\]
and thus $M_a = K_a \boxcon \zeta$. Note that without using Lemma \ref{lem:1pI_sqdot_commute} we also get $M_a = \zeta \boxcon K_a$, and $\zeta$ is in the center of $(G_B^\inv, \boxcon)$.
\end{rmk}

It may be reasonable to assume that any equality involving $\rhd_l$ and $\rhd_r$ can be derived using only the post-group axioms (in some sense, that would mean that $\rhd_l$ and $\rhd_r$ are "free"). However, there is at least one statement that does not seem to fit in this category. It is given by the following Lemma.

\begin{lem}
\label{lem:rhd_r_l_additional_eq}
Let $F,G,H\in G_B^\mathrm{inv}$. Then we have
\begin{equation}
\label{eq:rhd_r_l_additional_eq}
	F \rhd_r \big((S^r(F)\rhd_r G) \rhd _l H\big) = G \rhd_l \big((S^l(G)\rhd_l F) \rhd_r H\big).
\end{equation}
\end{lem}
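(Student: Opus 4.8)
The plan is to expand both sides using only the definitions of $\rhd_l$ and $\rhd_r$ together with the two S-transform identities $S^r(F)\cdot I = (FI)^{\circ-1}$ and $I\cdot S^l(G)=(IG)^{\circ-1}$ from Definition~\ref{def:S_transform}, and to show that each side collapses to the same series $H\circ(F\cdot I\cdot G)$. The only structural tools needed are associativity of composition, the fact that $I$ is a two-sided unit for $\circ$, and the right-distributivity $(f\cdot g)\circ h = (f\circ h)\cdot(g\circ h)$ of Lemma~\ref{lem:rightaction}; to invoke the latter one first checks that the relevant inner series $FI$ and $IG$ lie in $G_B^{\dif}$, so that their zeroth components vanish as the distributivity requires.

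First I would unfold the left-hand side. Writing $A\rhd_l B = B\circ(IA)$ and $A\rhd_r B=B\circ(AI)$, one obtains
$$
	F\rhd_r\big((S^r(F)\rhd_r G)\rhd_l H\big) = H\circ\big(I\cdot(G\circ(S^r(F)I))\big)\circ(FI).
$$
Then I would push the outer composition by $FI$ inward using associativity of $\circ$ and right-distributivity applied to the product $I\cdot(G\circ(S^r(F)I))$, turning the relevant bracket into $(I\circ(FI))\cdot\big((G\circ(S^r(F)I))\circ(FI)\big)$. Since $I\circ(FI)=FI$, and since, by associativity of composition together with $S^r(F)I=(FI)^{\circ-1}$, one has $(G\circ(S^r(F)I))\circ(FI)=G\circ\big((FI)^{\circ-1}\circ(FI)\big)=G\circ I = G$, the left-hand side reduces to $H\circ\big((FI)\cdot G\big)=H\circ(F\cdot I\cdot G)$.

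The right-hand side is handled by the mirror-image computation, exchanging the roles $\rhd_l\leftrightarrow\rhd_r$, $F\leftrightarrow G$, and using $I\cdot S^l(G)=(IG)^{\circ-1}$ in place of $S^r(F)\cdot I=(FI)^{\circ-1}$; it collapses to $H\circ\big(F\cdot(IG)\big)=H\circ(F\cdot I\cdot G)$. Comparing the two outputs then finishes the proof, the final identification $(FI)\cdot G=F\cdot(IG)$ being nothing more than associativity of the multiplication $\cdot$. I do not expect a genuine obstacle here; the only point demanding care is the bookkeeping of the \emph{one-sided} nature of the distributivity in Lemma~\ref{lem:rightaction} (composition distributes over multiplication on the right but not on the left), so that the cancellations $(S^r(F)I)\circ(FI)=I$ and $(IS^l(G))\circ(IG)=I$ are applied to the correct factor at each step.
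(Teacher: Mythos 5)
Your proof is correct and is precisely the ``short computation'' the paper's proof alludes to: both sides collapse to $H\circ(F\cdot I\cdot G)$ via the definitions of $\rhd_l$, $\rhd_r$, the identities $S^r(F)\cdot I=(FI)^{\circ-1}$ and $I\cdot S^l(G)=(IG)^{\circ-1}$, and the right-distributivity of Lemma~\ref{lem:rightaction}. Your attention to the one-sidedness of that distributivity and to the hypothesis $h_0=0$ is exactly the right bookkeeping.
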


\begin{proof}
A short computation shows that both sides equal $H\circ(FIG)$.
\end{proof}

\begin{rmk}
Let $F,G\in G_B^\inv$, and define $\widetilde F := S^l(G)\rhd_l F$ and $\widetilde G := S^r(F)\rhd_r G$, so that the last  lemma can be written $F \rhd_r (\widetilde G \rhd _l H) = G \rhd_l (\widetilde F \rhd_r H)$. Then one may wonder whether $F$ and $G$ can be written as functions of $\widetilde F$ and $\widetilde G$. The answer is quite interesting, in fact, we have 
\begin{align*}
	G &= F \rhd_r \widetilde G\\ 
	 F &= G \rhd_l \widetilde F,
\end{align*}
and 
\begin{align*}
G &= S^l(\widetilde G\rhd_r S^l(\widetilde F)) = H_1(\widetilde F,\widetilde G)\\ 
F &= S^r(\widetilde F\rhd_l S^r(\widetilde G)) = H_2(\widetilde F,\widetilde G).
\end{align*}
Here, the subordination functions, $H_1$ and $H_2$, defined in \eqref{eq:def_H1H2} in Definition \ref{def:H1H2} appear again.
\end{rmk}

The following proposition points at the fact that most of the work done for the actions $\rhd_l$ and $\rhd_r$ could be done at the general post-group level, given the equality from the above lemma.

\begin{prop}
\label{prop:general_constr_3rd_post_group}
Let $(G,\cdot, \rhd_1)$ and $(G, \overline\cdot, \rhd_2)$ be two post-groups. Assume that for all $x,y,z\in G$,
\begin{equation}
\label{eq:assumption_eq_rhd_1_2}
	x \rhd_1 \big((S_1(x)\rhd_1 y) \rhd_2 z\big) = y \rhd_2 \big((S_2(y)\rhd_2 x) \rhd_1 z\big),
\end{equation}
where $S_1$ and $S_2$ are the respective inverse maps for $*_{\rhd_1}$ and $*_{\rhd_2}$. Then the action
\[
	x \rhd_3 y := x \rhd_1 (S_1(x) \rhd_2 y)
\]
defines a post-group $(G,\cdot, \rhd_3)$.
\end{prop}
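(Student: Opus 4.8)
The plan is to verify the two defining properties of a post-group for $(G,\cdot,\rhd_3)$: first that $L^{\rhd_3}_x : y \mapsto x\rhd_3 y$ is an automorphism of $(G,\cdot)$ for every $x$, and second the magma relation $(x *_{\rhd_3} y)\rhd_3 z = x\rhd_3(y\rhd_3 z)$, where $x *_{\rhd_3} y = x\cdot(x\rhd_3 y)$ is the Grossman--Larson product of $\rhd_3$.

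The automorphism property is immediate: by definition $x\rhd_3(-) = L^{\rhd_1}_x \circ L^{\rhd_2}_{S_1(x)}$, the composite of $y\mapsto S_1(x)\rhd_2 y$ followed by $y\mapsto x\rhd_1 y$. The first is an automorphism of $(G,\overline\cdot)$ and the second of $(G,\cdot)$; since a bijection $\phi$ satisfies $\phi(ab)=\phi(a)\phi(b)$ for $\cdot$ if and only if it does so for $\overline\cdot$, both are automorphisms of $(G,\cdot)$, hence so is their composite.

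For the magma relation I would first record $x *_{\rhd_3} y = x *_{\rhd_1}(S_1(x)\rhd_2 y)$ and set $u := S_1(x)\rhd_2 y$. Expanding the left side with the post-group axiom of $\rhd_1$ and the group-inverse formula $S_1(a *_{\rhd_1} b) = S_1(b)*_{\rhd_1}S_1(a)$ gives $(x*_{\rhd_3}y)\rhd_3 z = (x*_{\rhd_1}u)\rhd_1\big((S_1(u)*_{\rhd_1}S_1(x))\rhd_2 z\big)$, whereas the right side is $x\rhd_1\big(S_1(x)\rhd_2(y\rhd_1(S_1(y)\rhd_2 z))\big)$. Reconciling these requires interchanging the nesting order of $\rhd_1$ and $\rhd_2$ — precisely what hypothesis \eqref{eq:assumption_eq_rhd_1_2} controls. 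I expect this interchange, together with the bookkeeping of the two \emph{opposite} group structures $\cdot,\overline\cdot$ and their inverse maps $S_1,S_2$, to be the main obstacle.

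Concretely, the whole relation reduces to the single identity
\[
	(S_1(u)\rhd_1 S_1(x))\rhd_2 S_1(y) = S_1(u), \qquad u = S_1(x)\rhd_2 y,
\]
which I would prove by applying \eqref{eq:assumption_eq_rhd_1_2} with $(x,y,z)\mapsto(u,S_1(x),S_1(y))$. The resulting right-hand side simplifies using $S_2(S_1(x))*_{\rhd_2}S_1(x)=e$ and the trivial action $e\rhd_2 w = w$ to $S_1(x)\rhd_2(y\rhd_1 S_1(y))$; since $w\rhd_1 S_1(w)=w^{-1}$ and the automorphism $S_1(x)\rhd_2(-)$ preserves inverses, this equals $(S_1(x)\rhd_2 y)^{-1}=u^{-1}=u\rhd_1 S_1(u)$. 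Comparing with the left-hand side $u\rhd_1\big((S_1(u)\rhd_1 S_1(x))\rhd_2 S_1(y)\big)$ and cancelling the injective map $u\rhd_1(-)$ yields the identity. Feeding it back, together with the post-group axiom of $\rhd_2$ to recombine $S_1(y)\rhd_2 z$ and one further application of \eqref{eq:assumption_eq_rhd_1_2}, transforms the left side of the magma relation into the right side and completes the proof.
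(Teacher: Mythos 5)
Your proof is correct, and it is organised differently from the paper's. The paper verifies the weighted associativity in the ``inverted'' form $S_1(y)\rhd_3(S_1(x)\rhd_3 z)=(S_1(y)\cdot S_1(y)\rhd_3 S_1(x))\rhd_3 z$ and routes the whole computation through the subordination elements $H_1=S_1(y\rhd_2 S_1(x))$, $H_2=S_2(x\rhd_1 S_2(y))$, first proving $H_2=H_1\rhd_1 y$, $H_1=H_2\rhd_2 x$ and $H_1*_{\rhd_1}y=H_2*_{\rhd_2}x$ (mirroring the earlier lemma on $H_1,H_2$ for the boxed convolution). You instead attack $(x*_{\rhd_3}y)\rhd_3 z=x\rhd_3(y\rhd_3 z)$ head-on and isolate the single identity $(S_1(u)\rhd_1 S_1(x))\rhd_2 S_1(y)=S_1(u)$ for $u=S_1(x)\rhd_2 y$, which plays the role of the paper's $H$-identities but avoids introducing them. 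Your derivation of that identity is sound (the specialisation of \eqref{eq:assumption_eq_rhd_1_2} to $(u,S_1(x),S_1(y))$, the cancellation $S_2(S_1(x))\rhd_2 u=y$, and the facts $w\rhd_1 S_1(w)=w^{-1}$, $e\rhd_2 w=w$ all hold in any post-group), and the compressed final step does close: writing $v:=S_1(u)\rhd_1 S_1(x)$, your identity gives $S_1(u)*_{\rhd_1}S_1(x)=S_1(u)\cdot v=(v\rhd_2 S_1(y))\cdot v=v*_{\rhd_2}S_1(y)$, so the $\rhd_2$ axiom yields $(S_1(u)*_{\rhd_1}S_1(x))\rhd_2 z=v\rhd_2(S_1(y)\rhd_2 z)$, and one more application of \eqref{eq:assumption_eq_rhd_1_2} with third argument $S_1(y)\rhd_2 z$ turns $u\rhd_1\bigl(v\rhd_2(S_1(y)\rhd_2 z)\bigr)$ into $S_1(x)\rhd_2\bigl(y\rhd_1(S_1(y)\rhd_2 z)\bigr)$, which after prepending $x\rhd_1$ is the right-hand side. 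What your route buys is economy (no auxiliary $H_1,H_2$, the axiom checked at the original arguments); what the paper's buys is that the intermediate objects $H_1,H_2$ are exactly the subordination series it wants to highlight elsewhere. You should, however, spell out the final recombination as above rather than leaving it as a one-sentence claim, since that is where the two opposite group structures interact most delicately.
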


\begin{proof}
Let us derive some equalities first. For $x,y\in G$, define $H_1 = H_1(x,y) = S_1(y \rhd_2 S_1(x))$ and $H_2 = H_2(x,y) = S_2(x \rhd_1 S_2(y))$. From the assumption applied to $S_1(x)$, $S_2(y)$ and any $z\in G$, we have
\begin{align*}
	S_1(x) \rhd_1 (S_2(H_2) \rhd_2 z) &= S_2(y) \rhd_2 (S_1(H_1) \rhd_1 z).
\end{align*}
Therefore, by substituing $z = H_2$, the left-hand side reads
\begin{align*}
	S_1(x) \rhd_1 (S_2(H_2) \rhd H_2) 
	&= S_1(x) \rhd_1 S_2(H_2)^{-1} \\ 
	&= S_1(x) \rhd_1 (x \rhd_1 S_2(y))^{-1} \\
	&= S_1(x) \rhd_1 (x \rhd_1 S_2(y)^{-1}) \\ 
	&= S_2(y)^{-1} \\
	&= S_2(y) \rhd_2 y.
\end{align*}
Therefore we can identify on the right-hand side $y = S_1(H_1) \rhd_1 H_2$, or equivalentely
\[
	H_2 = H_1 \rhd_1 y.
\]
One can derive, in a similar manner, $H_1 = H_2 \rhd_2 x$.

Then, it can be derived from the assumption \eqref{eq:assumption_eq_rhd_1_2} applied to $S_1(H_1)$, $x$ and $z$ that 
\begin{align}
	y \rhd_1 (S_1(x) \rhd_2 z) 
	&= S_1(H_1) \rhd_2 ((H_1\rhd_1 y) \rhd_1 z) \nonumber\\ 
	&= S_1(H_1) \rhd_2 (H_2 \rhd_1 z). \label{eq:eq_rhd_1_2_v2}
\end{align}

We also have
\begin{align}
	S_1(y) \cdot S_1(y) \rhd_3 S_1(x) 
		&= S_1(H_1 *_{\rhd_1} y) \label{eq:rhd_3_from_H_1}\\
	S_1(y) \cdot S_1(y) \rhd_3 S_1(x) 
		&= S_1(H_2 *_{\rhd_2} x) \label{eq:rhd_3_from_H_2}.
\end{align}
Indeed, the first equality comes from direct computation
\begin{align*}
	S_1(H_1 *_{\rhd_1} y) 
	&= S_1(y) *_{\rhd_1} S_1(H_1) \\ 
	&= S_1(y) \cdot S_1(y) \rhd_1 S_1(H_1) \\ 
	&= S_1(y) \cdot S_1(y) \rhd_1 (y \rhd_2 S_1(x)) \\ 
	&= S_1(y) \cdot S_1(y) \rhd_3 S_1(x).
\end{align*}
The second equality comes from the fact that $H_2 *_{\rhd_2} x = H_1 *_{\rhd_1} y$. Indeed,
\begin{align*}
	H_2 *_{\rhd_2} x 
	&= H_2 \overline \cdot H_2\rhd_2 x \\ 
	&= H_2\rhd_2 x \cdot H_2 \\ 
	&= H_1\cdot H_2 \\ 
	&= H_1 \cdot H_1 \rhd_1 y \\ 
	&= H_1 *_{\rhd_1} y.
\end{align*}

Let us now prove that $(G,\cdot, \rhd_3)$ is a post-group. It suffices to show that, for $x,y,z\in G$, we have
\begin{equation}
\label{eq:change_var_rhd_3_post-gr}
	S_1(y) \rhd_3 (S_1(x) \rhd_3 z) = (S_1(y)\cdot S_1(y)\rhd_3 S_1(x)) \rhd_3 z.
\end{equation}
It is equivalent to the usual post-group axiom by a change of variables, and will make notations lighter. We have
\begin{align*}
	S_1(y) \rhd_3 (S_1(x) \rhd_3 z) &= S_1(y) \rhd_1 (y \rhd_2 (S_1(x) \rhd_1 (x \rhd_2 z))) \\ 
	&= S_1(y) \rhd_1 (S_1(H_1) \rhd_1 (H_2 \rhd_2 (x \rhd_2 z))) \\ 
	&= (S_1(y) *_{\rhd_1} S_1(H_1)) \rhd_1 ((H_2 *_{\rhd_2} x) \rhd_2 z) \\ 
	&= S_1(H_1 *_{\rhd_1} y) \rhd_1 ((H_2 *_{\rhd_2} x) \rhd_2 z) \\ 
	&= (S_1(x)\cdot S_1(y)\rhd_3 S_1(x)) \rhd_3 z,
\end{align*}
where the second equality comes from \eqref{eq:eq_rhd_1_2_v2}, and the last equality comes from \eqref{eq:rhd_3_from_H_1} and \eqref{eq:rhd_3_from_H_2}.
\end{proof}

\begin{rmk}
\begin{enumerate}
	\item Proposition \ref{prop:general_constr_3rd_post_group} generalises the setting of the two post-groups $(G_B^\inv, \cdot, \rhd_l)$ and $(G_B^\inv, \overline\cdot, \rhd_r)$. It shows that the construction of the post-group $(G_B^\inv, \cdot, \sqdot)$ is general, given identity \eqref{eq:assumption_eq_rhd_1_2}, which is in this case given by Lemma \ref{lem:rhd_r_l_additional_eq}.
	
	\item We can reverse the roles of the post-groups $(G,\cdot, \rhd_1)$ and $(G,\overline\cdot, \rhd_2)$ in the above proposition. Therefore there is a also post-group $(G, \overline\cdot, \rhd_4)$, with $x \rhd_4 y = x \rhd_2 (S_2(x) \rhd_1 y)$. However, we can show that $x\rhd_4 y = x^{-1} \rhd_3 y$. Hence the post-groups $(G, \cdot, \rhd_3)$ and $(G, \overline\cdot, \rhd_4)$ are related through the bijection of Remark \ref{rmk:post_group_corresp_inverse_group}, and $x\mapsto x^{-1}$ is a group isomorphism between them.
	
	\item Given any post-group $(G, \cdot, \rhd_1)$, one can construct the post-group $(G, \overline\cdot, \rhd_1')$ as in Remark \ref{rmk:post_group_corresp_inverse_group}, and these two groups satisfy the assumption \eqref{eq:assumption_eq_rhd_1_2} of Proposition \ref{prop:general_constr_3rd_post_group} if and only if the group $(G, \cdot)$ is abelian, i.e., if $(G, \cdot, \rhd_1)$ is a pre-group. However, in this case, the third post-group (and thus, pre-group) given by Proposition \ref{prop:general_constr_3rd_post_group} is the trivial post-group, i.e., $x \rhd_3 y = y$.
\end{enumerate}
\end{rmk}

%%%%%%%%%%%%%%%%%%%%%%%%%%%%%%%%%%%%%%%%%%%%%%%%%%%%%%%%%%%%%%%%%%%%%%%%%%%%%%%%%%%%%%%%%%%%%%%%%%%
%%%%%%%%%%%%%%%%%%%%%%%%%%%%%%%%%%%%%%%%%%%%%%%%%%%%%%%%%%%%%%%%%%%%%%%%%%%%%%%%%%%%%%%%%%%%%%%%%%%

\section{Pre-Lie and post-Lie structures on formal multilinear function series}
\label{sec:lie_structure}

%%%%%%%%%%%%%%%%%%%%%%%%%%%%%%%%%%%%%%%%%%%%%%%%%%%%%%%%%%%%%%%%%%%%%%%%%%%%%%%%%%%%%%%%%%%%%%%%%%%

\subsection{Linearisation of composition}
\label{ssec:lin_compo_pre-Lie}

\begin{defn}
Define $\mathfrak g_B^\inv := \{f\in \Mult[[B]] \mid f_0=0\}$, as well as the bijective exponential, $\exp : \mathfrak g_B^\inv \to G_B^\inv$, with respect to multiplication $\cdot$. We can think of $\mathfrak g_B^\inv$ as the Lie algebra of $G_B^\inv$.
\end{defn}

While the product $\cdot$ on $\Mult[[B]]$ is bilinear, this is not the case for composition $\circ$. Let us define a linearisation of the latter.

\begin{defn}
For $f,g\in \mathfrak g_B^\inv$ and their composition, $f \circ g$, define the linearisation of the latter to be the series $\{f,g\}\in \mathfrak g_B^\inv$ such that for $N\ge 1$,
\begin{align*}
	\{f,g\}_N(x_1, x_2, \ldots, x_N) = \sum_{\substack{n,m\ge 1 \\ n + m = N+1}} 
	& f_n(g_m(x_1, \ldots, x_m), x_{m+1}, \ldots, x_N) + f_n(x_1, g_m(x_2, \ldots, x_{m+1}), \ldots, x_N) + \\
	& \cdots + f_n(x_1, x_2, \ldots, g_m(x_n, \ldots,x_N)).
\end{align*}
\end{defn}

Symbolically, we can write $\{f,g\}\in \mathfrak g_B^\inv$ as
\[
	\{f,g\} = \sum_{n,m\ge 1} f_n(g_m, I, \ldots, I) + \cdots + f_n(I, \ldots, I, g_m).
\]
This bracket is the linearization of composition in the following sense: For $f,g\in \mathfrak g_B^\inv$ and $s,t\in \bb K$, then $\{f,g\}$ is the coefficient of $st$ in $(sf)\circ(tg)$.

Given a vector space $V$ with bilinear product, $\diamond: V \times V \to V$, recall the definition of associator $\mathrm{a}_\diamond: V \times  V \times V \to V$, which measures the lack of associativity
\begin{equation}
\label{def:asso}
	\mathrm{a}_\diamond(x,y,z):= x \diamond (y \diamond z) -  (x \diamond y) \diamond z.
\end{equation}

\begin{prop}
The map $(f,g) \mapsto \{f,g\}$ defines a right pre-Lie product over $\mathfrak g_B^\mathrm{inv}$, i.e. for $f,g,h\in \mathfrak g_B^\mathrm{inv}$, it satisfies the right pre-Lie identity $\mathrm{a}_{\scriptscriptstyle{\{\cdot,\cdot\}}}(f,g,h) = \mathrm{a}_{\scriptscriptstyle{\{\cdot,\cdot\}}}(f,h,g)$, or explicitly
\[
	\{f, \{g, h\}\} - \{\{f, g\}, h\} = \{f, \{h, g\}\} - \{\{f, h\}, g\}.
\]
\end{prop}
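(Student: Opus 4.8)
The plan is to prove the right pre-Lie identity by a direct combinatorial analysis of the bracket $\{\cdot,\cdot\}$, organizing the computation around the structure of how the inner series $g$ and $h$ get inserted into the arguments of $f$. The cleanest route exploits the fact, stated in the excerpt, that $\{f,g\}$ is the coefficient of $st$ in $(sf)\circ(tg)$; this lets me lift the associativity of composition $\circ$ (Lemma \ref{lem:rightaction}) to a statement about the bracket, rather than grinding through the multilinear index bookkeeping by hand.

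Concretely, I would first recall that composition $\circ$ on $\mathfrak g_B^\inv$ is associative, and consider the trilinear expansion of $(rf)\circ\big((sg)\circ(th)\big) = \big((rf)\circ(sg)\big)\circ(th)$ in the formal parameters $r,s,t$. Extracting the coefficient of $rst$ on both sides produces an identity among iterated brackets. The subtlety is that $\circ$ is not bilinear, so its second-order Taylor behaviour must be tracked carefully: writing $f=rf_0'+\cdots$ symbolically, the coefficient of $rst$ in a double composition picks up \emph{two} distinct contributions on each side — one where $g$ and $h$ are inserted into separate argument-slots of $f$ (the ``disjoint'' insertions), and one where $h$ is nested \emph{inside} $g$ before the pair is inserted into $f$ (the ``nested'' insertion). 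I would show that the disjoint-insertion term is symmetric in $g$ and $h$, hence identical on both sides and irrelevant, while the nested term reproduces exactly $\{f,\{g,h\}\}$ on the left and $\{\{f,g\},h\}$ on the right. Equating the two sides and cancelling the common symmetric piece yields
\[
	\{f,\{g,h\}\} - \{\{f,g\},h\} = \text{(symmetric in } g,h),
\]
and since the right-hand side is manifestly unchanged under $g\leftrightarrow h$, swapping $g$ and $h$ gives the required identity $\mathrm{a}_{\scriptscriptstyle{\{\cdot,\cdot\}}}(f,g,h) = \mathrm{a}_{\scriptscriptstyle{\{\cdot,\cdot\}}}(f,h,g)$.

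The main obstacle, and the step I would spend the most care on, is isolating precisely the $rst$-coefficient and cleanly separating the disjoint from the nested insertions. The associator $\{f,\{g,h\}\}-\{\{f,g\},h\}$ is a second-order object, so the subtlety is that associativity of $\circ$ holds at \emph{all} orders simultaneously, and one must argue that the genuinely problematic (non-symmetric) contributions live entirely in the nested-insertion term. An alternative, fully elementary route avoids the generating-function device altogether: one expands each of the four brackets directly using the summation defining $\{\cdot,\cdot\}$, sorts the resulting terms according to whether the two inner insertions overlap or are disjoint, and checks that all disjoint-insertion terms are symmetric in the roles of the two inner series. I would present whichever is shorter to write out, but I expect the generating-function argument via associativity of $\circ$ to be the most transparent, since it reduces the entire proposition to the already-established Lemma \ref{lem:rightaction} together with the observation that ``$g,h$ in disjoint slots'' is symmetric by construction.
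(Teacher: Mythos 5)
Your fallback ``fully elementary route'' is exactly the paper's proof: expand $\{f,\{g,h\}\}$ and $\{\{f,g\},h\}$ termwise, observe that every term of the former (where $h_p$ sits inside $g_m$ inside $f_n$) also occurs in the latter, and that the leftover terms of $\{\{f,g\},h\}$ are precisely those with $g_m$ and $h_p$ in distinct argument slots of $f_n$, a sum manifestly symmetric under $g\leftrightarrow h$. Your preferred generating-function variant is a correct repackaging of the same cancellation, with two caveats you should fix when writing it out: the bracket is the coefficient of $st$ in $(sf)\circ(I+tg)$, not in $(sf)\circ(tg)$ as the surrounding text loosely states (otherwise the $st$-coefficient is just $f_1\circ g$); and the disjoint-insertion term does not appear ``on each side'' --- the $rst$-coefficient of $\big((rf)\circ(I+sg)\big)\circ(I+th)$ is exactly $\{\{f,g\},h\}$ by linearity of $\circ$ in its left argument, while the other side contributes $\{f,\{g,h\}\}$ plus the disjoint sum, so the symmetric disjoint sum is the \emph{discrepancy} between the two sides rather than a common piece that cancels.
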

 
 We refer the reader to \cite{burde2006,manchon2011} for reviews on pre-Lie algebras, including examples and applications.

\begin{proof}
Let $f,g,h \in \mathfrak g_B^\inv$. On the one hand
\begin{align*}
	\{f, \{g, h\}\} 
	&= \sum_{n,n'\ge 1} f_n(\{g,h\}_{n'}, I, \ldots, I) + \cdots + f_n(I, \ldots, I, \{g,h\}_{n'}) \\ 
	&= \sum_{n,m,p\ge 1} \Big(f_n(g_m(h_p, I, \ldots, I), I, \ldots, I) + \cdots + f_n(g_m(I, \ldots, I, h_p), I, \ldots, I)\Big) \\ 
	& \hspace{40pt}  + \cdots + \Big(f_n(I, \ldots, I, g_m(h_p, I, \ldots, I)) + \cdots + f_n(I, \ldots, I, g_m(I, \ldots, I, h_p))\Big),
\end{align*}
where the inside sum is over all possible position of $h_p$ inside $g_m$ inside $f_n$; there are $nm$ terms. On the other hand,
\begin{align*}
	\{\{f, g\}, h\} 
	&= \sum_{p',p\ge 1} \{f,g\}_{p'}(h_p, I, \ldots, I) + \cdots + \{f,g\}_{p'}(I, \ldots, I, h_p) \\ 
	&= \sum_{n,m,p\ge 1} \Big( f_n(g_m(h_p, I, \ldots, I), I, \ldots, I) + f_n(g_m(I, \ldots, I), I, \ldots, I, h_p)\Big) \\
	& \hspace{40pt} + \Big(f_n(h_p, g_m(I, \ldots, I), I, \ldots, I) + f_n(I, g_m(I, \ldots, I), I, \ldots, I, h_p)\Big)\Big) \\ 
	& \hspace{40pt} + \cdots + \Big(f_n(h_p, I, \ldots, I, g_m(I, \ldots, I)) + \cdots + f_n(I, \ldots, I, g_m(I, \ldots, I, h_p))\Big).
\end{align*}
Note that this time, in the inside sum, the $h_p$ can be anywhere inside the $f_n$, including outside $g_m$. There are $n(n+m-1)$ terms; $n$ choices for $g_m$, and $n+m-1$ choices for $h_p$. Therefore, in $\{\{f, g\}, h\} - \{f, \{g, h\}\}$, all terms from $\{f, \{g, h\}\}$ cancel out, leaving only the $n(n-1)$ terms where $h_p$ is not inside of $g_m$. That is,
\begin{align*}
	\{\{f, g\}, h\} - \{f, \{g, h\}\} 
	&= \sum_{n,m,p\ge 1} f_n(g_m, h_p, I, \ldots, I) + \cdots + f_n(g_m, I, \ldots, I, h_p) \\
	& \hspace{40pt} + f_n(h_p, g_m, I, \ldots, I) + f_n(I, g_m, h_p, I, \ldots, I) + \cdots + f_n(I, g_m, I, \ldots, I, h_p)  \\ 
	& \hspace{40pt} + \cdots + f_n(h_p, I, \ldots, g_m) + \cdots + f_n(I, \ldots, I, h_p, g_m).
\end{align*}
This expression is now symmetric in $g$ and $h$, which concludes the proof.
\end{proof}

The following lemma can be understood as the Lie algebra analog of the right distributivity of composition over multiplication \eqref{rightaction} stated in Lemma \ref{lem:rightaction}. It has multiple consequences.

\begin{lem}
\label{lem:lin_compo_derivation_left}
For $f,g,h\in \mathfrak g_B^\mathrm{inv}$, we have the equality
\begin{equation}
\label{eq:lin_compo_derivation_left}
	\{f \cdot g,h\} = \{f,h\} \cdot g + f \cdot  \{g,h\}.
\end{equation}
\end{lem}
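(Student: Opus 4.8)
The plan is to reinterpret the bracket $\{-,h\}$ as a formal derivative in the direction $h$ and to reduce the asserted Leibniz rule to the right-distributivity of composition over multiplication from Lemma~\ref{lem:rightaction}.

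First I would establish the reformulation
\[
	\{f,h\} = \frac{d}{dt}\Big|_{t=0} f\circ(I+th),
\]
meaning that $\{f,h\}$ is the coefficient of $t^1$ in the formal one-parameter family $t\mapsto f\circ(I+th)$. This is legitimate because $h\in\mathfrak g_B^\inv$ gives $h_0=0$, so $(I+th)_0=0$ and the composition \eqref{compo} is defined; moreover each component of $f\circ(I+th)$ is polynomial in $t$. Expanding \eqref{compo} with inner series $I+th$, the factor $(I+th)_k=\delta_{k,1}\Id+th_k$ contributes its $t$-linear part in exactly one slot while the remaining slots contribute $\Id$ (forcing their indices to equal $1$); collecting these terms reproduces verbatim the explicit definition of $\{f,h\}$, namely one $h_m$ inserted in a single slot of $f_n$ with $n+m=N+1$.

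With this dictionary in hand, the main step is immediate. Applying Lemma~\ref{lem:rightaction} to the admissible inner series $I+th$ gives
\[
	(f\cdot g)\circ(I+th) = \big(f\circ(I+th)\big)\cdot\big(g\circ(I+th)\big).
\]
Since the multiplication $\cdot$ is bilinear (see \eqref{multipl}), differentiation in $t$ obeys the ordinary product rule; evaluating at $t=0$ and using that $I$ is a two-sided unit for composition, so that $f\circ I=f$ and $g\circ I=g$, yields
\[
	\{f\cdot g,h\} = \{f,h\}\cdot g + f\cdot\{g,h\},
\]
which is exactly \eqref{eq:lin_compo_derivation_left}. One should also note that $f\cdot g\in\mathfrak g_B^\inv$, since $(f\cdot g)_0=f_0g_0=0$, so that the left-hand bracket is indeed well defined.

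The only real subtlety, and hence the main thing to get right, is the reformulation in the first step: one must check carefully that extracting the $t$-linear coefficient of $f\circ(I+th)$ selects precisely the configurations with a single $h$-insertion and identities elsewhere, matching the combinatorial definition of $\{f,h\}$. Once this is in place everything else is formal, and the identity is a one-line consequence of Lemma~\ref{lem:rightaction} together with bilinearity of $\cdot$. As an alternative, more laborious route, one could instead expand both sides of \eqref{eq:lin_compo_derivation_left} directly via \eqref{multipl} and the definition of the bracket, and match the resulting terms according to whether the inserted $h_p$ lands in the $f$-factor or the $g$-factor; the derivative formulation is preferable precisely because it avoids this bookkeeping.
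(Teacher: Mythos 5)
Your proof is correct, but it takes a genuinely different route from the paper's. The paper proves \eqref{eq:lin_compo_derivation_left} by direct expansion: it unfolds $\{f\cdot g,h\}$ using the definition \eqref{multipl} of $\cdot$ together with the combinatorial definition of the bracket, and sorts the resulting terms according to whether the inserted $h_p$ lands inside the $f$-factor or the $g$-factor, which immediately yields $\{f,h\}\cdot g+f\cdot\{g,h\}$ --- this is precisely the ``more laborious route'' you mention at the end. You instead make precise the linearisation slogan, namely that $\{f,h\}$ is the $t$-linear coefficient of $f\circ(I+th)$, and then obtain the Leibniz rule by extracting the $t$-coefficient of the right-distributivity identity $(f\cdot g)\circ(I+th)=\big(f\circ(I+th)\big)\cdot\big(g\circ(I+th)\big)$ from Lemma \ref{lem:rightaction}. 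The points you flag are exactly the ones that need care --- $(I+th)_0=0$ so the composition is defined, each component is polynomial in $t$, the $t$-linear extraction forces a single $h$-insertion with identities elsewhere and so reproduces the bracket verbatim, and $f\circ I=f$ --- and they all go through. Your argument has the merit of explaining structurally the paper's own remark that this lemma is ``the Lie algebra analog of the right distributivity of composition over multiplication,'' turning that analogy into the actual proof; the trade-off is that it leans on Lemma \ref{lem:rightaction}, whose verification is outsourced to Dykema, whereas the paper's term-by-term computation is self-contained.
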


\begin{proof}
Let $f,g,h\in \mathfrak g_B^\inv$. Then
\begin{align*}
	\{f \cdot g,h\} 
	&= \sum_{p',p\ge 1} (f \cdot g)_{p'}(h_p, I, \ldots, I) + \cdots + (f \cdot g)_{p'}(I, \ldots, I, h_p) \\ 
	&= \sum_{n,m,p\ge 1} f_n(h_p, I, \ldots, I) \cdot g_m(I, \ldots, I) + \cdots + f_n(I, \ldots, I, h_p) \cdot g_m(I, \ldots, I) \\ 
	& \hspace{40pt} + f_n(I, \ldots, I) \cdot g_m(h_p, I, \ldots, I) + \cdots + f_n(I, \ldots, I) \cdot g_m(I, \ldots, I, h_p) \\
	&= \sum_{n,m,p\ge 1} \Big(f_n(h_p, I, \ldots, I) + \cdots + f_n(I, \ldots, I, h_p)\Big)\cdot g_m \\ 
	& \hspace{40pt} + f_n\cdot \Big(g_m(h_p, I, \ldots, I) + \cdots + g_m(I, \ldots, I, h_p)\Big) \\ 
	&= \{f,h\} \cdot g + f \cdot \{g,h\}.
\end{align*}
\end{proof}

\begin{coroll}
\label{coroll:derivation_lin_compo_I}
For $f,g\in \mathfrak g_B^\mathrm{inv}$, we have the equalities
\begin{align*}
	\{I \cdot f,g\} &= I \cdot \{f,g\} + g \cdot f \\ 
	\{f \cdot I,g\} &= \{f,g\} \cdot I + f \cdot g.
\end{align*}
\end{coroll}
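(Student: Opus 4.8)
The plan is to obtain both equalities as immediate specialisations of the Leibniz-type rule established in Lemma~\ref{lem:lin_compo_derivation_left}, namely $\{f\cdot g,h\}=\{f,h\}\cdot g+f\cdot\{g,h\}$, once we record the single auxiliary fact that $\{I,g\}=g$ for every $g\in\mathfrak g_B^\mathrm{inv}$. Note first that $I=(\delta_{n,1}\Id)_{n\ge0}$ lies in $\mathfrak g_B^\mathrm{inv}$, since $I_0=0$, so the bracket $\{I,-\}$ is well defined and Lemma~\ref{lem:lin_compo_derivation_left} applies with $I$ as one of the factors.

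The key step is the computation $\{I,g\}=g$. I would establish this directly from the definition of the bracket: in $\{I,g\}_N$ the outer series is $f=I$, whose only nonzero component is $f_1=\Id$, so every summand with $n\ge2$ vanishes and we are forced to take $n=1$, hence $m=N$. The single surviving term is then $I_1\!\bigl(g_N(x_1,\ldots,x_N)\bigr)=g_N(x_1,\ldots,x_N)$, giving $\{I,g\}_N=g_N$ for all $N\ge1$, i.e.\ $\{I,g\}=g$. Alternatively, and perhaps more conceptually, one may invoke the interpretation of the bracket as the linearisation of composition: $\{I,g\}$ is the coefficient of $st$ in $(sI)\circ(tg)$, and since $(sI)\circ h=sh$ for any $h$ with $h_0=0$, we get $(sI)\circ(tg)=st\,g$, whence the coefficient of $st$ is exactly $g$.

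With this in hand, the two identities follow by a single substitution each. For the first, apply Lemma~\ref{lem:lin_compo_derivation_left} with the factorisation $I\cdot f$, i.e.\ taking the lemma's first factor to be $I$, its second to be $f$, and its last argument to be $g$:
\[
	\{I\cdot f,g\}=\{I,g\}\cdot f+I\cdot\{f,g\}=g\cdot f+I\cdot\{f,g\}.
\]
For the second, apply the same lemma with the factorisation $f\cdot I$, taking the second factor to be $I$:
\[
	\{f\cdot I,g\}=\{f,g\}\cdot I+f\cdot\{I,g\}=\{f,g\}\cdot I+f\cdot g.
\]

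There is essentially no obstacle here: all the combinatorial work is already absorbed into Lemma~\ref{lem:lin_compo_derivation_left}, and the only genuinely new content is the triviality $\{I,g\}=g$. The mild point to be careful about is simply matching the arguments of the Leibniz rule correctly in each case (which factor plays the role of $f$ and which of $g$), so that the noncommutative products $g\cdot f$ and $f\cdot g$ land on the correct side; the linearisation identity for $(sI)\circ(tg)$ makes the normalisation transparent and guards against sign or ordering slips.
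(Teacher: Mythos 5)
Your proposal is correct and follows exactly the paper's own argument: the corollary is deduced from the Leibniz rule of Lemma~\ref{lem:lin_compo_derivation_left} together with the identity $\{I,g\}=g$, which you verify correctly from the definition (only the $n=1$, $m=N$ term survives when the outer series is $I$). The additional check via the linearisation of $(sI)\circ(tg)$ is a harmless bonus not present in the paper.
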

\begin{proof}
It follows immediately from Lemma \ref{lem:lin_compo_derivation_left} and the fact that $\{I,g\} = g$.
\end{proof}

\begin{rmk}
Note that, however, $\{f,I\}\neq f$ in general. In fact, $\{-,I\}$ is the grading operator: If $f$ is homogenous of degree $n$, i.e., if $f$ identifies with a $n$-linear map, then $\{f,I\} = nf$.
\end{rmk}

We introduce now the notion of Nijenhuis operator on a vector space with a bilinear product (later specified to be a pre-Lie algebra). Nijenhuis operators on associative algebras appeared in \cite{CGM2000}.

\begin{defn}
\label{def:Nijenhuismap}
Let $(A,\diamond)$ be a $\mathbb{K}$-vector space equipped with a bilinear product. A $\mathbb{K}$-linear map $N: A \to A$ is called Nijenhuis operator if it satisfies the Nijenhuis identity for all $x,y \in A$
\begin{equation}
\label{eq:Nijenhuismap}
	N(x) \diamond N(y) = N\big(N(x) \diamond y + x \diamond N(y) - N(x \diamond y) \big).
\end{equation}
\end{defn}

\begin{prop}
\label{prop:mult_by_I_Nijenhuis}
The maps $f \mapsto \lambda(f)=I \cdot f$ and $f \mapsto \rho(f)=f \cdot I$ are Nijenhuis maps for the pre-Lie algebra $(\mathfrak g_B^\mathrm{inv}, \{\cdot,\cdot\})$, i.e., they satisfy the relations
\[
	\{\lambda(f),\lambda(g)\} = \lambda\big( \{\lambda(f),g\} + \{f,\lambda(g)\} - \lambda(\{f,g\}) \big),
\]
and similarly for $\rho$.
\end{prop}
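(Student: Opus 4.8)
The plan is to reduce the Nijenhuis identity \eqref{eq:Nijenhuismap} (with $\diamond = \{\cdot,\cdot\}$) to the two derivation-type identities of Corollary \ref{coroll:derivation_lin_compo_I} together with associativity of the multiplication $\cdot$ (Lemma \ref{lem:rightaction}). The crucial observation is that the ``mixed'' bracket $\{f, I\cdot g\}$ never needs to be expanded combinatorially: it occurs identically on both sides and simply cancels. I treat $\lambda$ in detail; the argument for $\rho$ is the mirror image, obtained by replacing the first identity of Corollary \ref{coroll:derivation_lin_compo_I} by the second. Note first that $I\cdot f, I\cdot g \in \mathfrak g_B^\inv$ (since $I_0 = 0$ forces a vanishing constant term), so every bracket below is defined.

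For the left-hand side, applying the first identity of Corollary \ref{coroll:derivation_lin_compo_I} with second argument $I\cdot g$ gives
\[
	\{\lambda(f), \lambda(g)\} = \{I f, I g\} = I\cdot \{f, I g\} + (I g)\cdot f.
\]
For the right-hand side, the same corollary yields $\{I f, g\} = I\cdot\{f,g\} + g\cdot f$; substituting this and cancelling the two copies of $I\cdot\{f,g\}$ leaves
\[
	\lambda\big(\{\lambda(f), g\} + \{f, \lambda(g)\} - \lambda(\{f,g\})\big)
	= I\cdot\big( g\cdot f + \{f, I g\}\big)
	= I\cdot\{f, I g\} + I\cdot(g\cdot f).
\]
The terms $I\cdot\{f, I g\}$ agree on both sides, so the whole identity collapses to $(I g)\cdot f = I\cdot(g\cdot f)$, which is exactly the associativity of $\cdot$. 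This establishes the Nijenhuis relation for $\lambda$.

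For $\rho$ I would run the identical computation using the second identity $\{f\cdot I, g\} = \{f,g\}\cdot I + f\cdot g$: expanding $\{\rho(f),\rho(g)\} = \{fI, gI\} = \{f, gI\}\cdot I + f\cdot(gI)$ and simplifying the deformed right-hand side to $\{f, gI\}\cdot I + (f\cdot g)\cdot I$, the residual identity is $f\cdot(g I) = (f\cdot g)\cdot I$, again associativity. The only genuine subtlety — the ``main obstacle'', such as it is — is bookkeeping: Corollary \ref{coroll:derivation_lin_compo_I} resolves $I\cdot(-)$ (resp.\ $(-)\cdot I$) only in the \emph{first} slot of the bracket, so one must deliberately leave $\{f, Ig\}$ (resp.\ $\{f, gI\}$) untouched and rely on its cancellation rather than attempting a direct combinatorial expansion of the linearised composition.
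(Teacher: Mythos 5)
Your proof is correct and follows essentially the same route as the paper: both apply Corollary \ref{coroll:derivation_lin_compo_I} to the pairs $(f,g)$ and $(f, I\cdot g)$, leave the mixed bracket $\{f, Ig\}$ unexpanded so that it cancels, and reduce the Nijenhuis identity to associativity of $\cdot$ (the paper performs the reduction by multiplying the first identity by $I$ and subtracting, which is the same computation with different bookkeeping). The $\rho$ case is handled identically in both.
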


\begin{proof}
Let us derive the equality for $\lambda(f)=If$, the other one being similar. Corollary \ref{coroll:derivation_lin_compo_I} applied to $(f, g)$ and $(f, Ig)$ gives
\begin{align}
\label{twoidentities}
\begin{aligned}
	\{I \cdot f,g\} &= I\{f,g\} + g \cdot f \\ 
	\{I \cdot f,I \cdot g\} &= I\{f,I \cdot g\} + (I \cdot g) \cdot f.
\end{aligned}
\end{align}
Multiplying the first equality by $I$, i.e., applying $\lambda$ leads to 
\[
	I\{I \cdot f,g\} = I^2\{f,g\} + I \cdot g \cdot f.
\]
We get the result by subtracting this equality from the second equality in \eqref{twoidentities}:
\[
	\{If,Ig\} = I\{f,Ig\} + I\{If,g\} - I^2\{f,g\}.
\]
\end{proof}

Let us also note the following corollary from Lemma \ref{lem:lin_compo_derivation_left}, where the Lie bracket for $f,g \in  \mathfrak g_B^\mathrm{inv}$ is defined by $[f,g]=f \cdot g - g \cdot f$.

\begin{coroll}
\label{coroll:first_eq_post_Lie_lin_compo}
For $f,g,h\in \mathfrak g_B^\mathrm{inv}$, we have
\[
	\{[f,g],h\} = [f, \{g,h\}] + [\{f,h\},g].
\]
\end{coroll}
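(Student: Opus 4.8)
The final statement to prove is Corollary \ref{coroll:first_eq_post_Lie_lin_compo}:
\[
	\{[f,g],h\} = [f, \{g,h\}] + [\{f,h\},g],
\]
where $[f,g] = f\cdot g - g\cdot f$.

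The plan is to expand both sides using the Leibniz-type identity from Lemma \ref{lem:lin_compo_derivation_left}, namely $\{f\cdot g, h\} = \{f,h\}\cdot g + f\cdot\{g,h\}$, which tells us that $\{-,h\}$ acts as a derivation of the associative product $\cdot$. First I would compute the left-hand side by substituting the definition of the commutator:
\[
	\{[f,g],h\} = \{f\cdot g - g\cdot f, h\} = \{f\cdot g, h\} - \{g\cdot f, h\}.
\]
Applying Lemma \ref{lem:lin_compo_derivation_left} to each of the two terms yields
\[
	\{[f,g],h\} = \{f,h\}\cdot g + f\cdot\{g,h\} - \{g,h\}\cdot f - g\cdot\{f,h\}.
\]

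Next I would expand the right-hand side by unfolding the two commutators. We have $[f,\{g,h\}] = f\cdot\{g,h\} - \{g,h\}\cdot f$ and $[\{f,h\},g] = \{f,h\}\cdot g - g\cdot\{f,h\}$, so that
\[
	[f,\{g,h\}] + [\{f,h\},g] = f\cdot\{g,h\} - \{g,h\}\cdot f + \{f,h\}\cdot g - g\cdot\{f,h\}.
\]
Comparing this with the expanded left-hand side, the four terms match exactly (after reordering), which establishes the identity. This is really just the observation that any derivation $D = \{-,h\}$ of an associative algebra is automatically a derivation of the associated Lie bracket, which is a standard fact; the corollary is its concretisation here.

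I expect no genuine obstacle: the whole argument is a direct bilinear expansion relying solely on Lemma \ref{lem:lin_compo_derivation_left} together with the bilinearity of both $\cdot$ and $\{\cdot,\cdot\}$ in the appropriate arguments (note that $\{\cdot,h\}$ is linear in its first slot, which is all we use). The only point requiring any care is bookkeeping the signs and the order of the factors in the noncommutative product $\cdot$, since $B$ may be noncommutative; but since Lemma \ref{lem:lin_compo_derivation_left} is stated for the general noncommutative setting, each application preserves the factor order and the signs cancel correctly. Thus the proof is a short two-line computation and I would present it simply by expanding both sides and matching terms.
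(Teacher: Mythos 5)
Your proof is correct and follows exactly the route the paper intends: the corollary is stated as an immediate consequence of Lemma \ref{lem:lin_compo_derivation_left} (the paper leaves the expansion implicit), and your computation — expanding both commutators, applying the derivation identity twice, and matching the four terms — is precisely that omitted verification, including the correct use of linearity of $\{\cdot,h\}$ in its first slot.
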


Returning to Proposition \ref{prop:mult_by_I_Nijenhuis}, we consider both Nijenhuis maps simultaneously which leads to another equality, that will be useful later.

\begin{lem}
\label{lem:mix_both_Nijenhuis}
Let $f,g\in \mathfrak g_B^\inv$. Then
\begin{equation}
\label{eq:mixedid}
	\{g \cdot I, I \cdot f\} - \{I \cdot f, g \cdot I\} = \{g, I \cdot f\}I - I\{f, g \cdot I\}.
\end{equation}
\end{lem}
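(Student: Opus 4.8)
The plan is to reduce each of the two brackets on the left-hand side to an expression built from the two \enquote{basic} brackets $\{g, I\cdot f\}$ and $\{f, g\cdot I\}$ using Corollary \ref{coroll:derivation_lin_compo_I}, and then to check that the leftover cubic products cancel by associativity of the multiplication $\cdot$. No pre-Lie identity is needed; everything follows from the Leibniz-type rule of Lemma \ref{lem:lin_compo_derivation_left} in its two specialised forms.

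First I would apply the second identity of Corollary \ref{coroll:derivation_lin_compo_I}, namely $\{h\cdot I, k\} = \{h,k\}\cdot I + h\cdot k$, with $h=g$ and $k = I\cdot f$, to obtain
\[
	\{g\cdot I, I\cdot f\} = \{g, I\cdot f\}\cdot I + g\cdot (I\cdot f).
\]
Next I would apply the first identity of Corollary \ref{coroll:derivation_lin_compo_I}, namely $\{I\cdot h, k\} = I\cdot\{h,k\} + k\cdot h$, with $h=f$ and $k = g\cdot I$, to obtain
\[
	\{I\cdot f, g\cdot I\} = I\cdot\{f, g\cdot I\} + (g\cdot I)\cdot f.
\]

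Subtracting the second display from the first, the two surviving cubic terms are $g\cdot (I\cdot f)$ and $(g\cdot I)\cdot f$, which coincide by associativity of $\cdot$ (both equal $g\cdot I\cdot f$) and therefore cancel. What remains is exactly $\{g, I\cdot f\}\cdot I - I\cdot\{f, g\cdot I\}$, which is the asserted right-hand side of \eqref{eq:mixedid}. The only point requiring care — and the sole potential pitfall — is the bookkeeping of which argument carries the $I\cdot$ prefix versus the $\cdot I$ suffix when specialising the two Corollary identities; once the correct substitutions are made, the identity is immediate and the cancellation is forced purely by associativity.
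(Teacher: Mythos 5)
Your proof is correct and is essentially identical to the paper's own argument: both apply the two identities of Corollary \ref{coroll:derivation_lin_compo_I} to $\{g\cdot I, I\cdot f\}$ and $\{I\cdot f, g\cdot I\}$ respectively, and subtract, with the cubic terms $g\cdot(I\cdot f)$ and $(g\cdot I)\cdot f$ cancelling by associativity. The substitutions are carried out correctly, so there is nothing to add.
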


\begin{proof}
From Corollary \ref{coroll:derivation_lin_compo_I}, we obtain
\begin{align*}
	\{g\cdot I, I\cdot f\} &= \{g, I\cdot f\}I + g\cdot I\cdot f \\
	\{I\cdot f, g\cdot I\} &= I\cdot \{f, g\cdot I\} + g\cdot I\cdot f.
\end{align*}
Subtracting both equalities gives the desired identity \eqref{eq:mixedid}.
\end{proof}

%%%%%%%%%%%%%%%%%%%%%%%%%%%%%%%%%%%%%%%%%%%%%%%%%%%%%%%%%%%%%%%%%%%%%%%%%%%%%%%%%%%%%%%%%%%%%%%%%%%

\subsection{Post-Lie algebra structures}
\label{ssec:post-Lie}

We will see that three post-Lie algebras can be constructed from the pre-Lie product $\{f,g\}$ defined over $\mathfrak g_B^\mathrm{inv}$. Vallette introduced the notion of post-Lie algebra in \cite{vallette2007}. The closely related notion of $D$-algebra appeared independently in \cite{MKW2008}. Both pre- and post-Lie algebras are naturally defined in terms of invariant connections defined on manifolds. See \cite{curry2019} for a concise summary, including examples and application. 

\begin{defn}
\label{def:postLie}
A \emph{post-Lie algebra} $(\mathfrak{g}, [\cdot,\cdot], \blacktriangleright)$ consists of a Lie algebra $(\mathfrak g, [\cdot,\cdot])$ together with a linear product $\blacktriangleright: {\mathfrak g} \otimes {\mathfrak g} \rightarrow \mathfrak g$ such that the following relations hold for all elements $x,y,z \in \mathfrak g$
\begin{align}
\label{postLie1}
	z \blacktriangleright[x,y] 	
	&= [z \blacktriangleright x , y] + [x , z \blacktriangleright y]\\
\label{postLie2}
	[x,y] \blacktriangleright z 	
	&= {\mathrm{a}}_{\blacktriangleright}(x,y,z) - {\mathrm{a}}_{\blacktriangleright}(y,x,z).
\end{align}
where the associator ${\mathrm{a}}_{\blacktriangleright}(x,y,z)$ of $x,y,z$ is defined in \eqref{def:asso}.

An abelian post-Lie algebra, i.e., when $\mathfrak{g}$ is a trivial Lie algebra, reduces to a (left) pre-Lie algebra $(\mathfrak{g}, \blacktriangleright )$ with left pre-Lie identity ${\mathrm{a}}_{\blacktriangleright}(x,y,z) = {\mathrm{a}}_{\blacktriangleright}(y,x,z).$
\end{defn}

The following result is standard in the theory of post-Lie algebras \cite{curry2019}.

\begin{thm}
In any post-Lie algebra $(\mathfrak{g}, [\cdot,\cdot], \blacktriangleright)$, there is a second Lie algebra structure available, given by the bracket
\[
	[x,y]_\blacktriangleright := [x,y] + x \blacktriangleright y - y \blacktriangleright x.
\]
In particular, a pre-Lie algebra is Lie admissible, i.e., $[x,y]_\blacktriangleright = x \blacktriangleright y - y \blacktriangleright x$ is a Lie bracket.
\end{thm}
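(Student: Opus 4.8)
The plan is to verify that $[\cdot,\cdot]_\blacktriangleright$ satisfies the three defining properties of a Lie bracket. Bilinearity is immediate, since both $[\cdot,\cdot]$ and $\blacktriangleright$ are bilinear, and antisymmetry is a one-line check: interchanging $x$ and $y$ turns $[x,y]+x\blacktriangleright y - y\blacktriangleright x$ into $[y,x]+y\blacktriangleright x - x\blacktriangleright y = -[x,y]_\blacktriangleright$. Hence the whole content of the theorem is the Jacobi identity for $[\cdot,\cdot]_\blacktriangleright$, and I would introduce the left-multiplication operator $D_x := x\blacktriangleright(\cdot)$ on $\mathfrak g$ in order to organise the computation.

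The two post-Lie axioms translate into two structural facts about $D$. First, \eqref{postLie1} says precisely that each $D_z$ is a derivation of the original bracket $[\cdot,\cdot]$. Second, rewriting \eqref{postLie2} in operator form gives $D_{[x,y]} = D_x D_y - D_y D_x - D_{x\blacktriangleright y - y\blacktriangleright x}$; moving the last term to the left-hand side and using linearity of $D$ yields the key identity $D_{[x,y]_\blacktriangleright} = [D_x, D_y]$, where the right-hand bracket is the commutator of operators. In other words, $x\mapsto D_x$ is a Lie algebra morphism from $(\mathfrak g, [\cdot,\cdot]_\blacktriangleright)$ into $\End(\mathfrak g)$. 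These two facts are the entire engine of the proof.

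I would then expand $[[x,y]_\blacktriangleright, z]_\blacktriangleright$ by unfolding the outer bracket as $[\,\cdot\,,z] + (\cdot)\blacktriangleright z - z\blacktriangleright(\cdot)$ and sum cyclically over $x,y,z$. The term $(\cdot)\blacktriangleright z = D_{[x,y]_\blacktriangleright} z$ is rewritten, via the key identity, as $x\blacktriangleright(y\blacktriangleright z) - y\blacktriangleright(x\blacktriangleright z)$; the term $z\blacktriangleright(\cdot) = D_z[x,y]_\blacktriangleright$ is expanded using that $D_z$ is a derivation. This produces three families of terms: a purely-bracket family $\sum_{\mathrm{cyc}}[[x,y],z]$, which vanishes by the Jacobi identity of $(\mathfrak g,[\cdot,\cdot])$; a ``double-$\blacktriangleright$'' family of iterated products such as $x\blacktriangleright(y\blacktriangleright z)$; and a ``mixed'' family of brackets of the form $[u\blacktriangleright v, w]$. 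I expect the main obstacle to be the bookkeeping: one must check that the double-$\blacktriangleright$ family cancels termwise in the cyclic sum, and that the mixed family cancels after applying antisymmetry of $[\cdot,\cdot]$. Both cancellations are purely formal once the terms are listed, so the Jacobi identity follows. Finally, the pre-Lie assertion is the abelian specialisation $[\cdot,\cdot]=0$, for which $[x,y]_\blacktriangleright = x\blacktriangleright y - y\blacktriangleright x$ and the same computation, with all bracket terms absent, shows this commutator to be a Lie bracket.
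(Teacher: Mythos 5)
Your argument is correct. Note that the paper itself gives no proof of this theorem --- it is stated as a standard fact with a reference to the literature --- so there is no in-paper argument to compare against; your write-up supplies exactly the standard proof. The two structural observations you isolate, namely that \eqref{postLie1} says each $D_z = z\blacktriangleright(\cdot)$ is a derivation of $[\cdot,\cdot]$ and that \eqref{postLie2} is equivalent to $D_{[x,y]_\blacktriangleright} = [D_x,D_y]$ in $\End(\mathfrak g)$, are precisely what make the cyclic sum $\sum_{\mathrm{cyc}}[[x,y]_\blacktriangleright,z]_\blacktriangleright$ collapse: the pure-bracket family vanishes by the Jacobi identity of $[\cdot,\cdot]$, and both the iterated-$\blacktriangleright$ and mixed families cancel termwise exactly as you describe (the mixed family after rewriting $-[x,z\blacktriangleright y]$ as $[z\blacktriangleright y,x]$). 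The abelian specialisation to the pre-Lie/Lie-admissibility statement is likewise correct.
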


A correspondence between post-Lie groups (that is, post-groups that are also Lie groups) and post-Lie algebras is given in \cite{Bai_Guo_Sheng_Tang_post_groups} (see also \cite{al-kaabi_ebrahimi-fard_manchon}). For a post-Lie group $(G, \cdot, \rhd)$, a post-Lie action is defined on $(\mathfrak g, [\cdot,\cdot])$ by
\begin{equation}
\label{eq:post-Lie_exp}
	x \blacktriangleright y = \frac{d^2}{dsdt} \Big|_{s,t=0} (\exp(tx) \rhd \exp(sy)). 
\end{equation}

Regarding $(G_B^\inv, \cdot)$, we can find several post-Lie algebra structures over its Lie algebra $(\mathfrak g_B^\inv, [\cdot,\cdot])$. Indeed, using formula \eqref{eq:post-Lie_exp}, from the post-group action $\rhd_l$ we get for $f,g\in \mathfrak g_B^\inv$
\begin{align*}
	\exp(tf) \rhd_l \exp(sg) &= \exp(sg) \circ (I \exp(tf)) \\ 
	&= (1 + sg + o(s)) \circ (I(1+tf+o(t))) \\ 
	&= (1 + sg + o(s)) \circ (I + t If + o(t)),
\end{align*}
and the coefficient of $st$ in this expression will be the multilinear series $\{g, If\}$. Similarly, from $\rhd_r$ we deduce $\{g,fI\}$. These define indeed post-Lie products.

\begin{prop}\footnote{This observation was first made by N.~Gilliers in the context of the work \cite{T-transf}.}
\label{prop:black_triangles_are_post_Lie}
The following binary operations are post-Lie products over $(\mathfrak g_B^\mathrm{inv}, [-, -])$:
\begin{align*}
	f \blacktriangleright_l g 	&:= \{g, If\} \\ 
	f \blacktriangleright_r g 	&:= \{g, -fI\} = -\{g,fI\}. 
\end{align*}
Combining the above post-Lie products, we find another post-Lie product
\begin{equation}
\label{eq:3rdpost-Lie_prod}
	f \blacktriangleright g := \{g, [I,f]\}.
\end{equation}
\end{prop}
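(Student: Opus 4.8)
The plan is to verify the two post-Lie axioms \eqref{postLie1} and \eqref{postLie2} for each of the three products, leveraging the already-established pre-Lie structure of $\{\cdot,\cdot\}$ and the derivation identities from Corollary \ref{coroll:derivation_lin_compo_I} and Lemma \ref{lem:lin_compo_derivation_left}. First I would dispose of $\blacktriangleright_l$ and $\blacktriangleright_r$ together: since $\rho(f)=fI$ and $\lambda(f)=If$ play symmetric roles, it suffices to treat $\blacktriangleright_l$ and note that $\blacktriangleright_r$ follows by swapping the side of $I$-multiplication and inserting the sign $-fI$. Alternatively, I would invoke the general post-group/post-Lie correspondence \eqref{eq:post-Lie_exp}: having already identified $f \blacktriangleright_l g = \{g, If\}$ as the second-order coefficient of $\exp(tf)\rhd_l \exp(sg)$, the fact that $(G_B^\inv,\cdot,\rhd_l)$ is a post-group (proved earlier) immediately yields that its infinitesimal counterpart is a post-Lie algebra. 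This is the cleanest route and avoids recomputing the axioms from scratch.

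For a self-contained verification, the key computations are as follows. Axiom \eqref{postLie1}, $f\blacktriangleright_l[g,h] = [f\blacktriangleright_l g, h] + [g, f\blacktriangleright_l h]$, unfolds to $\{[g,h], If\} = [\{g,If\}, h] + [g, \{h,If\}]$, which is exactly Corollary \ref{coroll:first_eq_post_Lie_lin_compo} applied with the arguments $(g,h,If)$; so this axiom is essentially free. Axiom \eqref{postLie2} requires showing $[f,g]\blacktriangleright_l h = \mathrm{a}_{\blacktriangleright_l}(f,g,h) - \mathrm{a}_{\blacktriangleright_l}(g,f,h)$. Here I would expand each associator using the definition $f\blacktriangleright_l g = \{g,If\}$ together with the Nijenhuis identity of Proposition \ref{prop:mult_by_I_Nijenhuis}, which controls the nested term $\{\{h,Ig\}, If\}$ versus $\{h, I\{g,If\}\}$; the Nijenhuis relation is precisely what converts the doubly-$I$-shifted bracket into a combination that matches the left-hand side $[f,g]\blacktriangleright_l h = \{h, I[f,g]\} = \{h, I(f\cdot g - g\cdot f)\}$.

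For the third product \eqref{eq:3rdpost-Lie_prod}, $f\blacktriangleright g := \{g,[I,f]\}$, the crucial observation is the linear decomposition $[I,f] = I\cdot f - f\cdot I = \lambda(f) - \rho(f)$, so that $f\blacktriangleright g = \{g, If\} - \{g, fI\} = f\blacktriangleright_l g + f\blacktriangleright_r g$. Thus $\blacktriangleright$ is the \emph{sum} of the two post-Lie products. Axiom \eqref{postLie1} is additive and hence holds automatically for the sum. The genuine work is axiom \eqref{postLie2}: the associator of a sum is not the sum of associators, so cross terms appear, and these must be shown to cancel in the antisymmetrized combination. This is where Lemma \ref{lem:mix_both_Nijenhuis} enters — the mixed identity $\{g\cdot I, I\cdot f\} - \{I\cdot f, g\cdot I\} = \{g,I\cdot f\}I - I\{f, g\cdot I\}$ is exactly tailored to control the cross terms that mix a $\lambda$-shift with a $\rho$-shift. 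The main obstacle will be organizing this cross-term bookkeeping cleanly: after expanding $\mathrm{a}_{\blacktriangleright}(f,g,h) - \mathrm{a}_{\blacktriangleright}(g,f,h)$ into four associator-type expressions (the two pure ones plus two mixed ones), I expect the pure terms to reproduce $[f,g]\blacktriangleright_l h + [f,g]\blacktriangleright_r h$ via the already-verified axioms, while the mixed terms must collapse under Lemma \ref{lem:mix_both_Nijenhuis}; confirming that collapse — and that it indeed accounts for $\{h, I[f,g]\}$ versus the right-shifted analog without leftover residue — is the delicate step.
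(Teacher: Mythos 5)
Your proposal is correct and follows essentially the same route as the paper: axiom \eqref{postLie1} via Corollary \ref{coroll:first_eq_post_Lie_lin_compo}, axiom \eqref{postLie2} for $\blacktriangleright_l$ via the pre-Lie identity together with the $I$-shift identities (your use of the Nijenhuis relation of Proposition \ref{prop:mult_by_I_Nijenhuis} is just a repackaging of Corollary \ref{coroll:derivation_lin_compo_I}, which the paper uses directly), and for $\blacktriangleright = \blacktriangleright_l + \blacktriangleright_r$ the same decomposition into pure and mixed associators with the cross terms cancelling by Lemma \ref{lem:mix_both_Nijenhuis}. You correctly identify the cross-term cancellation as the delicate step; the paper carries it out by proving exactly the identity $\mathrm{a}_{\blacktriangleright_r,\blacktriangleright_l}(f,g,h)=\mathrm{a}_{\blacktriangleright_l,\blacktriangleright_r}(g,f,h)$ that your outline calls for.
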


\begin{proof}
From Corollary \ref{coroll:first_eq_post_Lie_lin_compo}, we deduce for $f,g,h\in \mathfrak g_B^\mathrm{inv}$
\[
	h \blacktriangleright_l[f,g] 
	= \{[f,g],Ih\} 
	= [f, \{g,Ih\}] + [\{f,Ih\},g] 
	= [f , h \blacktriangleright_l g] + [h \blacktriangleright_l f, g].
\]
Moreover,
\begin{align*}
	\mathrm{a}_{\blacktriangleright_l}(f,g,h) - \mathrm{a}_{\blacktriangleright_l}(g,f,h) 
	&= f \blacktriangleright_l (g \blacktriangleright_l h) - (f \blacktriangleright_l g) \blacktriangleright_l h 
		- g \blacktriangleright_l (f \blacktriangleright_l h) + (g \blacktriangleright_l f) \blacktriangleright_l h \\ 
	&= \{\{h, Ig\}, If\} - \{h, I\{g, If\}\} - \{\{h, If\}, Ig\} + \{h, I\{f, Ig\}\} \\ 
	&= \{h, \{Ig, If\}\} - \{h, I\{g, If\}\} - \{h, \{If, Ig\} + \{h, I\{f, Ig\}\} \\ 
	&= \{h, Ifg\} - \{h, Igf\} \\ 
	&= \{h, I[f,g]\} \\ 
	&= [f,g] \blacktriangleright_l h.
\end{align*}
The third equality is a consequence of $\{\cdot,\cdot\}$ being pre-Lie, and the fourth one comes from Corollary \ref{coroll:derivation_lin_compo_I}. Therefore $(\mathfrak g_B^\inv, [-, -], \blacktriangleright_l)$ is a post-Lie algebra. The verification is similar for the product $\blacktriangleright_r$. 

For the product \eqref{eq:3rdpost-Lie_prod}, equation \eqref{postLie1} is easily checked. Then, we have, for $x,y,z\in \mathfrak g_B^\inv$,
\begin{align*}
	\lefteqn{\mathrm{a}_\blacktriangleright(f,g,h) - \mathrm{a}_\blacktriangleright(g,f,h) 
	= f \blacktriangleright_l (g \blacktriangleright_l h)  + f \blacktriangleright_r (g \blacktriangleright_r h) 
		+ f \blacktriangleright_l (g \blacktriangleright_r h) + f \blacktriangleright_r (g \blacktriangleright_l h)}\\
	&- (f \blacktriangleright_l g) \blacktriangleright_l h - (f \blacktriangleright_r g) \blacktriangleright_r h 
		- (f \blacktriangleright_l g) \blacktriangleright_r h - (f \blacktriangleright_r g) \blacktriangleright_l h \\ 
	&- g \blacktriangleright_l (f \blacktriangleright_l h) - g \blacktriangleright_r (f \blacktriangleright_r h) 
		- g \blacktriangleright_l (f \blacktriangleright_r h) - g \blacktriangleright_r (f \blacktriangleright_l h) \\ 
	&+ (g \blacktriangleright_l f) \blacktriangleright_l h + (g \blacktriangleright_r f) \blacktriangleright_r h 
		+ (g \blacktriangleright_l f) \blacktriangleright_r h + (g \blacktriangleright_r f) \blacktriangleright_l h \\ 
	&= \mathrm{a}_{\blacktriangleright_l}(f,g,h) - \mathrm{a}_{\blacktriangleright_l}(g,f,h) 
		+ \mathrm{a}_{\blacktriangleright_r}(f,g,h) - \mathrm{a}_{\blacktriangleright_r}(g,f,h) \\ 
	&\hspace{3cm}+\mathrm{a}_{\blacktriangleright_l, \blacktriangleright_r}(f,g,h) - \mathrm{a}_{\blacktriangleright_l, \blacktriangleright_r}(g,f,h) 
		+ \mathrm{a}_{\blacktriangleright_r, \blacktriangleright_l}(f,g,h) 
		 - \mathrm{a}_{\blacktriangleright_r, \blacktriangleright_l}(g,f,h),
\end{align*}
where for any two binary products $\bullet_1, \bullet_2$
\[
	\mathrm{a}_{\bullet_1, \bullet_2}(f,g,h) := f \bullet_1 (g \bullet_2 h) - (f \bullet_1 g) \bullet_2 h. 
\]
Note that 
\begin{align*}
	\mathrm{a}_{\blacktriangleright_l}(f,g,h) - \mathrm{a}_{\blacktriangleright_l}(g,f,h) 
	+ \mathrm{a}_{\blacktriangleright_r}(f,g,h) - \mathrm{a}_{\blacktriangleright_r}(g,f,h) 
	&= [f,g] \blacktriangleright_l h + [f,g] \blacktriangleright_r h \\ 
	&= [f,g] \blacktriangleright h.
\end{align*}
So we have to show that 
\begin{equation}
\label{eq:mixed_post_assoc_vanish}
	\mathrm{a}_{\blacktriangleright_l, \blacktriangleright_r}(f,g,h) 
	- \mathrm{a}_{\blacktriangleright_l, \blacktriangleright_r}(g,f,h) 
	+ \mathrm{a}_{\blacktriangleright_r, \blacktriangleright_l}(f,g,h) 
	- \mathrm{a}_{\blacktriangleright_r, \blacktriangleright_l}(g,f,h) = 0
\end{equation}
We claim that for any $f,g,h \in \mathfrak g_B^\inv$,
\begin{equation}
\label{eq:claim_post_Lie_proof}
	f \blacktriangleright_l (g\blacktriangleright_r h) - (f \blacktriangleright_l g)\blacktriangleright_r h 
	= g\blacktriangleright_r (f \blacktriangleright_l h)  - (g\blacktriangleright_r f) \blacktriangleright_l h.
\end{equation}
The claim is equivalent to $\mathrm{a}_{\blacktriangleright_r, \blacktriangleright_l}(f,g,h) - \mathrm{a}_{\blacktriangleright_l, \blacktriangleright_r}(g,f,h) = 0$, and therefore implies equality \eqref{eq:mixed_post_assoc_vanish}. Let us prove \eqref{eq:claim_post_Lie_proof}. For $f,g,h \in \mathfrak g_B^\inv$, we have
\begin{align*}
	f \blacktriangleright_l (g\blacktriangleright_r h) - g\blacktriangleright_r (f \blacktriangleright_l h) 
	&= \{\{h, gI\}, If\} - \{\{h, If\}, gI\} \\ 
	&= \{h, \{gI, If\}\} - \{h, \{If, gI\}\} \\ 
	&= \Big\{h, \{gI, If\} - \{If, gI\}\Big\} \\ 
	&= \Big\{h, \{g, If\}I - I\{f, gI\}\Big\} \\ 
	&= \{h, \{g, If\}I\} - \{h, I\{f, gI\}\} \\ 
	&= (f \blacktriangleright_l g) \blacktriangleright_r h - (g\blacktriangleright_r f) \blacktriangleright_l h.
\end{align*}
The first equality follows from the pre-Lie identity for $\{\cdot,\cdot\}$, applied to $(h, gI, If)$, and the fourth equality follows from Lemma \ref{lem:mix_both_Nijenhuis}.
\end{proof}

\begin{rmk}
\begin{enumerate}
	\item Post-Lie products over a Lie algebra $(\mathfrak g, [\cdot,\cdot])$ and its opposite Lie algebra $(\mathfrak g, -[\cdot,\cdot])$ are in bijective correspondence by a change of sign. Precisely, the product $\blacktriangleright_1$ is a post-Lie product over $(\mathfrak g, [\cdot,\cdot])$ if and only if $-\blacktriangleright_1$ is a post-Lie product over $(\mathfrak g, -[\cdot,\cdot])$. This remark is of course strongly linked to Remark \ref{rmk:post_group_corresp_inverse_group}.
	
	\item The first remark explains why the post-Lie product corresponding to $\rhd_r$ is $-\{g,fI\}$ and not $\{g,fI\}$; as $\rhd_r$ is a post-group action over $(G_B^\inv, \overline \cdot)$, the product $\{g,fI\}$ is a post-Lie product over $(\mathfrak g_B^\inv, -[\cdot,\cdot])$.
	
	\item There is no additional post-Lie product for the post-group action $\rhd'$. Indeed, the correspondence also gives the product $\blacktriangleright$, the same as for the action $\rhd$. More precisely, $\rhd'$ is a post-group action for the inverse product $\overline \cdot$ on $G_B^\inv$, thus its corresponding post-Lie algebra is $(\mathfrak g_B^\inv, -[\cdot,\cdot], -\blacktriangleright)$.
\end{enumerate}
\end{rmk}

It is somewhat unexpected that
$$
	f \blacktriangleright g = f \blacktriangleright_l g + f \blacktriangleright_r g.
$$
Post-Lie products are not in general stable by linear combinations, or even sums. The following proposition gives a condition on which that is true. 
%It is reminiscent of families of pre-Lie algebras (\textbf{reference?})

\begin{prop}
Let $(\mathfrak g, [\cdot,\cdot], \blacktriangleright_1)$ and $(\mathfrak g, [\cdot,\cdot], \blacktriangleright_2)$ be two post-Lie algebras. Assume that for all $x,y,z\in \mathfrak g$, we have
\begin{equation}
\label{eq:identity_for_sum_post-Lie}
	x \blacktriangleright_1 (y\blacktriangleright_2 z) - (x \blacktriangleright_1 y)\blacktriangleright_2 z 
	= y\blacktriangleright_2 (x \blacktriangleright_1 z) - (y\blacktriangleright_2 x) \blacktriangleright_1 z.
\end{equation}
Then $(\mathfrak g, [\cdot,\cdot], \blacktriangleright_3)$ is a post-Lie algebra, with 
\[
	x \blacktriangleright_3 y = x \blacktriangleright_1 y + x \blacktriangleright_2 y.
\]
\end{prop}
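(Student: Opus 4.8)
The plan is to verify the two post-Lie axioms \eqref{postLie1} and \eqref{postLie2} directly for the product $\blacktriangleright_3 = \blacktriangleright_1 + \blacktriangleright_2$, using bilinearity to split each expression into contributions indexed by $\blacktriangleright_1$ and $\blacktriangleright_2$, and then isolating the single place where the hypothesis \eqref{eq:identity_for_sum_post-Lie} is actually needed. The first axiom \eqref{postLie1} requires no hypothesis at all: since $[\cdot,\cdot]$ is bilinear and each $\blacktriangleright_i$ satisfies $z\blacktriangleright_i[x,y] = [z\blacktriangleright_i x, y] + [x, z\blacktriangleright_i y]$, summing the two identities and regrouping the brackets immediately gives $z\blacktriangleright_3[x,y] = [z\blacktriangleright_3 x, y] + [x, z\blacktriangleright_3 y]$.

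The substance lies in \eqref{postLie2}. The key step I would carry out first is to expand the associator of the sum. Because $\blacktriangleright_3$ is a sum of two bilinear products, $\mathrm{a}_{\blacktriangleright_3}(x,y,z)$ expands into four pieces: the two \emph{pure} associators $\mathrm{a}_{\blacktriangleright_1}(x,y,z)$ and $\mathrm{a}_{\blacktriangleright_2}(x,y,z)$, together with the two \emph{mixed} associators $\mathrm{a}_{\blacktriangleright_1,\blacktriangleright_2}(x,y,z)$ and $\mathrm{a}_{\blacktriangleright_2,\blacktriangleright_1}(x,y,z)$, in the notation $\mathrm{a}_{\bullet_1,\bullet_2}(x,y,z):= x\bullet_1(y\bullet_2 z) - (x\bullet_1 y)\bullet_2 z$ introduced in the proof of Proposition \ref{prop:black_triangles_are_post_Lie}. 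Forming the antisymmetrised combination $\mathrm{a}_{\blacktriangleright_3}(x,y,z) - \mathrm{a}_{\blacktriangleright_3}(y,x,z)$ required on the right of \eqref{postLie2}, the pure contributions give $[x,y]\blacktriangleright_1 z + [x,y]\blacktriangleright_2 z = [x,y]\blacktriangleright_3 z$ by applying \eqref{postLie2} to $\blacktriangleright_1$ and $\blacktriangleright_2$ separately. Thus \eqref{postLie2} for $\blacktriangleright_3$ reduces to showing that the antisymmetrised mixed part vanishes.

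The decisive step is then to read the hypothesis \eqref{eq:identity_for_sum_post-Lie} as the single identity $\mathrm{a}_{\blacktriangleright_1,\blacktriangleright_2}(x,y,z) = \mathrm{a}_{\blacktriangleright_2,\blacktriangleright_1}(y,x,z)$: its left-hand side is precisely $\mathrm{a}_{\blacktriangleright_1,\blacktriangleright_2}(x,y,z)$, and its right-hand side $y\blacktriangleright_2(x\blacktriangleright_1 z) - (y\blacktriangleright_2 x)\blacktriangleright_1 z$ is precisely $\mathrm{a}_{\blacktriangleright_2,\blacktriangleright_1}(y,x,z)$. Swapping $x\leftrightarrow y$ yields the companion identity $\mathrm{a}_{\blacktriangleright_1,\blacktriangleright_2}(y,x,z) = \mathrm{a}_{\blacktriangleright_2,\blacktriangleright_1}(x,y,z)$. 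The mixed part equals $\mathrm{a}_{\blacktriangleright_1,\blacktriangleright_2}(x,y,z) + \mathrm{a}_{\blacktriangleright_2,\blacktriangleright_1}(x,y,z) - \mathrm{a}_{\blacktriangleright_1,\blacktriangleright_2}(y,x,z) - \mathrm{a}_{\blacktriangleright_2,\blacktriangleright_1}(y,x,z)$, and these two identities cancel it in pairs, leaving $0$. This is the abstract counterpart of the computation already performed in Proposition \ref{prop:black_triangles_are_post_Lie}, where the hypothesis was established for $\blacktriangleright_l,\blacktriangleright_r$ via Lemma \ref{lem:mix_both_Nijenhuis}.

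I do not anticipate a genuine obstacle here: once the associator of the sum is expanded, the argument is bookkeeping, and all nontrivial input is packaged into \eqref{eq:identity_for_sum_post-Lie}. The one point demanding care is matching the index conventions, namely confirming that the right-hand side of \eqref{eq:identity_for_sum_post-Lie} is $\mathrm{a}_{\blacktriangleright_2,\blacktriangleright_1}(y,x,z)$ rather than $\mathrm{a}_{\blacktriangleright_2,\blacktriangleright_1}(x,y,z)$, since an ordering slip there would destroy the pairwise cancellation of the mixed terms.
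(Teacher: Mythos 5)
Your proposal is correct and follows the same route as the paper: the paper's own proof simply states that the verification is straightforward and that the hypothesis enters in checking \eqref{postLie2} exactly as in the end of the proof of Proposition \ref{prop:black_triangles_are_post_Lie}, which is precisely the decomposition into pure and mixed associators that you spell out. Your reading of \eqref{eq:identity_for_sum_post-Lie} as $\mathrm{a}_{\blacktriangleright_1,\blacktriangleright_2}(x,y,z) = \mathrm{a}_{\blacktriangleright_2,\blacktriangleright_1}(y,x,z)$ and the resulting pairwise cancellation of the antisymmetrised mixed part are exactly right.
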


\begin{proof}
The proof is a straightforward verification of the post-Lie algebra axioms. The assumption is used to verify \eqref{postLie2}, in a similar manner as in the end of the proof of Proposition \ref{prop:black_triangles_are_post_Lie}.
\end{proof}

The last proposition corresponds to Proposition \ref{prop:general_constr_3rd_post_group} at the level of Lie-algebras.

\begin{prop}
Let $(G, \cdot, \rhd_1)$ and $(G, \cdot, \rhd_2)$ be post-groups satisfying the condition of Proposition \ref{prop:general_constr_3rd_post_group}. Assume that both post-groups are post-Lie groups, i.e., the products $\cdot$, $\rhd_1$, $\rhd_2$ are smooth. Let $(\mathfrak g, [\cdot,\cdot], \blacktriangleright_1)$ and $(\mathfrak g, -[\cdot,\cdot], -\blacktriangleright_2)$ be their associated post-Lie algebras \cite{Bai_Guo_Sheng_Tang_post_groups}. Then identity \eqref{eq:identity_for_sum_post-Lie} holds for products $\blacktriangleright_1$ and $\blacktriangleright_2$, giving rise to the post-Lie algebra $(\mathfrak g, \cdot, \blacktriangleright_3)$. Moreover, the post-group $(G, \cdot, \rhd_3)$ given by Proposition \ref{prop:general_constr_3rd_post_group} is a post-Lie group, and its post-Lie algebra is $(\mathfrak g, \cdot, \blacktriangleright_3)$.
\end{prop}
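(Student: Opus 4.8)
The plan is to work entirely through the post-group/post-Lie correspondence \eqref{eq:post-Lie_exp}, linearising all the group-level data. First I would record that $(G,\cdot,\rhd_3)$ is a post-group by Proposition~\ref{prop:general_constr_3rd_post_group}, and that it is in fact a post-Lie group: the action $x\rhd_3 y = x\rhd_1(S_1(x)\rhd_2 y)$ is a composite of the smooth maps $\rhd_1,\rhd_2,\cdot$ together with $S_1$, the latter being smooth as the inversion of the Lie group $(G,*_{\rhd_1})$. The two infinitesimal inputs I would isolate are: the linearisation of $\rhd_1$ equals $\blacktriangleright_1$ while that of $\rhd_2$ equals $-\blacktriangleright_2$ (since the post-Lie algebra of $(G,\overline\cdot,\rhd_2)$ is $(\mathfrak g,-[\cdot,\cdot],-\blacktriangleright_2)$, and the bilinear part of \eqref{eq:post-Lie_exp} is insensitive to whether one exponentiates in $\cdot$ or in $\overline\cdot$, both having the same linear term); and $\tfrac{d}{dt}\big|_0 S_1(\exp(tx)) = -x$, because $S_1$ is a group inversion and hence has differential $-\mathrm{id}$ at the unit.

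Next I would compute the post-Lie product of $(G,\cdot,\rhd_3)$ directly from \eqref{eq:post-Lie_exp}. Writing $a(t)=\exp(tx)$ and applying the chain rule to $\exp(tx)\rhd_3\exp(sy)=a(t)\rhd_1\bigl(S_1(a(t))\rhd_2\exp(sy)\bigr)$, and using that $g\rhd_i e=e$ so that all intermediate values sit at the unit, the mixed derivative $\partial_t\partial_s|_0$ splits into the contribution of the outer $\rhd_1$ (yielding $x\blacktriangleright_1 y$) plus the contribution of $\rhd_2$ evaluated along the curve $S_1(a(t))$ of tangent $-x$ (yielding $x\blacktriangleright_2 y$, the sign of the $\rhd_2$-linearisation and the sign of the tangent cancelling). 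Hence $x\blacktriangleright_3 y = x\blacktriangleright_1 y + x\blacktriangleright_2 y$, which identifies the post-Lie algebra of $(G,\cdot,\rhd_3)$ as $(\mathfrak g,[\cdot,\cdot],\blacktriangleright_3)$.

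It remains to establish identity \eqref{eq:identity_for_sum_post-Lie}, and here I would linearise the group hypothesis \eqref{eq:assumption_eq_rhd_1_2} directly, rather than try to deduce it from $\blacktriangleright_3$ being post-Lie (the latter yields only the antisymmetrisation of \eqref{eq:identity_for_sum_post-Lie}, not the identity itself). Setting $x=\exp(r\xi)$, $y=\exp(s\eta)$, $z=\exp(t\zeta)$ and extracting the coefficient of $rst$ on both sides, I would differentiate in $t$ first (each side equals the unit at $t=0$), then $s$, then $r$, carrying the chain rule through the nesting exactly as above. Since each of $r,s,t$ occurs to first order, only the bilinear linearisations survive and every side collapses to a nested pair of them, i.e.\ to a mixed associator $\mathrm{a}_{\bullet_1,\bullet_2}(u,v,w):=u\bullet_1(v\bullet_2 w)-(u\bullet_1 v)\bullet_2 w$ as in the proof of Proposition~\ref{prop:black_triangles_are_post_Lie}: the left-hand side produces $-\mathrm{a}_{\blacktriangleright_1,\blacktriangleright_2}(\xi,\eta,\zeta)$ and the right-hand side $-\mathrm{a}_{\blacktriangleright_2,\blacktriangleright_1}(\eta,\xi,\zeta)$, the sign of $\rhd_2$ and the factor $-\xi$ from $S_1$ (resp.\ $-\eta$ from $S_2$) entering just as before. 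Equating the two gives precisely \eqref{eq:identity_for_sum_post-Lie}. With \eqref{eq:identity_for_sum_post-Lie} in hand, the preceding proposition produces the post-Lie algebra $(\mathfrak g,[\cdot,\cdot],\blacktriangleright_3)$, consistent with the computation above, completing the proof.

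I expect the main obstacle to be the bookkeeping in the triple mixed derivative of \eqref{eq:assumption_eq_rhd_1_2}: one must check that the second-order Taylor terms of the group operations never reach the $rst$ monomial (guaranteed by each variable appearing linearly), and that the two sign conventions — the opposite bracket attached to $\rhd_2$ and the differential $-\mathrm{id}$ of the Grossman--Larson inversions $S_1,S_2$ — are propagated consistently through the nesting.
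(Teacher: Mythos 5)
Your proposal is correct and follows essentially the same route as the paper: both linearise the group-level hypothesis \eqref{eq:assumption_eq_rhd_1_2} via the triple mixed derivative to obtain \eqref{eq:identity_for_sum_post-Lie}, and compute the post-Lie product of $\rhd_3$ from the double mixed derivative of $x \rhd_1 (S_1(x)\rhd_2 y)$, using that the linearisation of $\rhd_2$ is $-\blacktriangleright_2$ and that $S_1$ has differential $-\mathrm{id}$ at the unit. Your sign bookkeeping (each side of the linearised hypothesis collapsing to a negated mixed associator $-\mathrm{a}_{\blacktriangleright_1,\blacktriangleright_2}$, resp.\ $-\mathrm{a}_{\blacktriangleright_2,\blacktriangleright_1}$) is if anything carried out more explicitly than in the paper's own intermediate computation, and the conclusion is the same.
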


\begin{proof}
The assumption is that for all $x,y,z\in G$,
\[
	x \rhd_1 ((S_1(x)\rhd_1 y) \rhd_2 z) = y \rhd_2 ((S_2(y)\rhd_2 x) \rhd_1 z).
\]

First, for $X, Y\in \mathfrak g$ and $i\in \{1,2\}$, we can show that
\begin{align*}
	\frac{d^2}{dsdt} \Big|_{s,t=0} S_i(e^{tX}) \rhd_i e^{sY} &= -X \blacktriangleright_i Y.
\end{align*}

Let $X,Y,Z\in \mathfrak g$. We have
\begin{align*}
	&\frac{d^3}{dsdtdr} \Big|_{s,t,r=0} e^{tX} \rhd_1 ((S_1(e^{tX})\rhd_1 e^{sY}) \rhd_2 e^{rZ}) \\
	&\hspace{30pt} = \frac{d^3}{dsdtdr} \Big|_{s,t,r=0} e^{tX} \rhd_1 (e^{sY} \rhd_2 e^{rZ}) 
		+ \frac{d^3}{dsdtdr} \Big|_{s,t,r=0} (S_1(e^{tX})\rhd_1 e^{sY}) \rhd_2 e^{rZ}	\\ 
	&\hspace{30pt} = X \blacktriangleright_1 (Y \blacktriangleright_2 Z) 
			- (X \blacktriangleright_1 Y) \blacktriangleright_2 Z.
\end{align*}
Therefore given the assumption, this expression equals, from a similar computation,
\[
	Y \blacktriangleright_2 (X \blacktriangleright_1 Z) 
						- (Y \blacktriangleright_2 X) \blacktriangleright_1 Z,
\]
and we get equality \eqref{eq:identity_for_sum_post-Lie}.

Then, recall that $x \rhd_3 y = x \rhd_1 (S_1(x) \rhd_2 y)$, and that $\rhd_3$ is automatically smooth if both $\rhd_1$ and $\rhd_2$ are. Moreover, for $X,Y\in \mathfrak g$,
\begin{align*}
	\frac{d^2}{dsdt}\Big|_{s,t=0} e^{tX} \rhd_3 e^{sY} 
	&= \frac{d^2}{dsdt}\Big|_{s,t=0} e^{tX} \rhd_1 (S_1(e^{tX}) \rhd_2 e^{sY}) \\ 
	&= \frac{d^2}{dsdt}\Big|_{s,t=0} e^{tX} \rhd_1 e^{sY} + \frac{d^2}{dsdt}\Big|_{s,t=0} S_1(e^{tX}) \rhd_2 e^{sY} \\ 
	&= X \blacktriangleright_1 Y - (-X \blacktriangleright_2 Y) \\
	&= X \blacktriangleright_3 Y.
\end{align*}
The double minus sign comes from the fact that the post-Lie algebra of $(G,\overline\cdot, \rhd_2)$ is $(\mathfrak g, -[\cdot,\cdot], -\blacktriangleright_2)$. Therefore $(\mathfrak g, [\cdot,\cdot], \blacktriangleright_3)$ is the post-Lie algebra of $(G,\cdot, \rhd_3)$.

\end{proof}

%%%%%%%%%%%%%%%%%%%%%%%%%%%%%%%%%%%%%%%%%%%%%%%%%%%%%%%%%%%%%%%%%%%%%%%%%%%%%%%%%%%%%%%%%%%%%%%%%%%

\subsection{Nijenhuis operator over a pre-Lie algebra}
\label{ssec:Nijenhuis_pre_Lie}

In this section, we explore the general setting of a pre-Lie algebra equipped with a Nijenhuis operator.

\begin{defn}
\label{def:derivator}
Let $(A,+)$ be a linear vector space, and $\bullet, \cdot$ be two bilinear products over $A$. Define the {\emph{derivator}} of three elements $x,y,z\in A$ to be 
\begin{equation}
\label{eq:derivator}
	\mathrm{Der}_{\bullet, \cdot}(x,y,z) = x\bullet(y\cdot z) - (x\bullet y)\cdot z - y \cdot (x\bullet z).
\end{equation}
The {\it{derivator}} is zero if and only if the product $\bullet$ is a deriviation over the product $\cdot$.
\end{defn}

\begin{prop}
Let $(A, \rhd)$ be a left pre-Lie algebra. Consider a Nijenhuis map $N:A\to A$ over $A$. Define for all $x,y \in A$ the new product $\cdot$ over $A$
\[
	x \cdot y := x \rhd N(y) - N(x\rhd y).
\]
Define also, for $x,y\in A$,
\[
	x \blacktriangleright y := N(x) \rhd y.
\]
Then the following statements are true:
\begin{enumerate}
	\item \label{item1} For all $x,y,z\in A$,
	\[
		\mathrm{Der}_{\rhd, \cdot}(x,y,z) = \mathrm{Der}_{\rhd, \cdot}(y,x,z).
	\]
	\item \label{item2} For all $x,y,z\in A$,
	\[
		\mathrm{a}_\cdot(x,y,z) - \mathrm{a}_\cdot(y,x,z) = \mathrm{Der}_{\rhd, \cdot}(N(y),x,z) - \mathrm{Der}_{\rhd, \cdot}(N(x),y,z).
	\]
	\item \label{item3} For all $x,y,z\in A$,
	\[
		\mathrm{Der}_{\rhd, \cdot}(N(x),y,z) = \mathrm{Der}_{\blacktriangleright, \cdot}(x,y,z).
	\]
	\item \label{item4} For all $x,y,z\in A$,
	\[
		\mathrm{a}_\blacktriangleright(x,y,z) - \mathrm{a}_\blacktriangleright(y,x,z) = [x,y]\blacktriangleright z,
	\]
	where $[x,y] := x\cdot y - y \cdot x$. 
	\item \label{item5} If for all $x,y,z\in A$, $\mathrm{Der}_{\rhd, \cdot}(x,y,z) = 0$, then $\cdot$ is a left pre-Lie product and $[\cdot,\cdot]$ is a Lie bracket. Consequently, $(A, [\cdot,\cdot], \blacktriangleright)$ is a post-Lie algebra.
\end{enumerate}
\end{prop}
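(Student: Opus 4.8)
The five statements are tightly linked: (1) says $\rhd$ is "left-symmetric relative to $\cdot$"; (2) expresses the failure of $\cdot$ to be left pre-Lie in terms of derivators; (3) transfers a derivator involving $N$ across to the new product $\blacktriangleright$; (4) is the second post-Lie axiom for $\blacktriangleright$; and (5) assembles everything into a post-Lie structure. The plan is to prove them roughly in the stated order, since each later item reuses the identities set up by the earlier ones, and the whole argument hinges on the Nijenhuis identity \eqref{eq:Nijenhuismap} together with the left pre-Lie identity $\mathrm{a}_\rhd(x,y,z)=\mathrm{a}_\rhd(y,x,z)$.

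\textbf{Items (1)--(3).} For (1), I would expand $\mathrm{Der}_{\rhd,\cdot}(x,y,z)$ by substituting the definition $y\cdot z = y\rhd N(z) - N(y\rhd z)$ into all three terms. Every occurrence of $\cdot$ unfolds into $\rhd$ applied to $N$ of something, minus $N$ of a $\rhd$-product; collecting terms and invoking the Nijenhuis identity to rewrite the "$N(x)\rhd N(y)$"-type expressions, the surviving combination should be built out of associators $\mathrm{a}_\rhd$, whose left-symmetry then yields the $x\leftrightarrow y$ exchange. Item (3) is the cleanest: I would simply write out both derivators, replace $x\blacktriangleright(-)=N(x)\rhd(-)$ on the left-hand side, and observe that $\mathrm{Der}_{\rhd,\cdot}(N(x),y,z)$ is literally $\mathrm{Der}_{\blacktriangleright,\cdot}(x,y,z)$ once the outer $\rhd$-slot is fed $N(x)$ --- this is essentially a bookkeeping identity requiring no deep input. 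Item (2) is the bridge: expanding $\mathrm{a}_\cdot(x,y,z)-\mathrm{a}_\cdot(y,x,z)$ and again unfolding each $\cdot$ into its $\rhd/N$ form, I expect the Nijenhuis identity to convert the doubly-nested $\cdot$-products into derivator expressions $\mathrm{Der}_{\rhd,\cdot}(N(y),x,z)$ and $\mathrm{Der}_{\rhd,\cdot}(N(x),y,z)$; the antisymmetrization in $x,y$ is what kills the leftover pieces.

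\textbf{Item (4), the main obstacle.} Proving $\mathrm{a}_\blacktriangleright(x,y,z)-\mathrm{a}_\blacktriangleright(y,x,z)=[x,y]\blacktriangleright z$ is where I expect the real work. I would combine (2) and (3): by (3), $\mathrm{Der}_{\rhd,\cdot}(N(y),x,z)=\mathrm{Der}_{\blacktriangleright,\cdot}(y,x,z)$ and likewise with $x,y$ swapped, so the right-hand side of (2) becomes $\mathrm{Der}_{\blacktriangleright,\cdot}(y,x,z)-\mathrm{Der}_{\blacktriangleright,\cdot}(x,y,z)$. Expanding these derivators and the associators $\mathrm{a}_\blacktriangleright$, the terms of the form $y\blacktriangleright(x\cdot z)$ must cancel against the derivator's $x\cdot(y\blacktriangleright z)$ contributions, and what should remain is precisely $(x\cdot y - y\cdot x)\blacktriangleright z = [x,y]\blacktriangleright z$. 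The delicate point is tracking the mixed terms $x\cdot(y\blacktriangleright z)$ versus $(x\blacktriangleright y)\cdot z$ and ensuring that the definition $[x,y]=x\cdot y-y\cdot x$ (rather than the original pre-Lie bracket of $\rhd$) is exactly the combination that emerges; here item (1)'s symmetry of $\mathrm{Der}_{\rhd,\cdot}$ is what guarantees consistency.

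\textbf{Item (5) and conclusion.} When $\mathrm{Der}_{\rhd,\cdot}\equiv 0$, the product $\rhd$ is a derivation of $\cdot$, and (1) reduces to $\mathrm{a}_\cdot(x,y,z)=\mathrm{a}_\cdot(y,x,z)$ via (2) (whose right-hand side now vanishes), giving the left pre-Lie identity for $\cdot$; hence $[\cdot,\cdot]$ is a Lie bracket by Lie admissibility of pre-Lie algebras (the standard result quoted earlier). Finally, under the vanishing hypothesis, (3) forces $\mathrm{Der}_{\blacktriangleright,\cdot}\equiv 0$ as well, so $\blacktriangleright$ is a derivation of $\cdot$ --- this is exactly axiom \eqref{postLie1} for the bracket $[\cdot,\cdot]$ --- while (4) is precisely axiom \eqref{postLie2}. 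Together these establish that $(A,[\cdot,\cdot],\blacktriangleright)$ is a post-Lie algebra, completing the proof.
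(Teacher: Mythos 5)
Your plans for items (1), (2), (3) and (5) are sound and consistent with the paper's (deliberately terse) proof, which carries out only item (1) in full and treats the rest as ``extended computations'' from the two inputs $\mathrm{a}_\rhd(x,y,z)=\mathrm{a}_\rhd(y,x,z)$ and the Nijenhuis identity. Two small remarks: item (3) is indeed pure bookkeeping (the two derivators coincide term by term once $x\blacktriangleright(-)=N(x)\rhd(-)$ is substituted), and, contrary to what you suggest, item (1) does not need the Nijenhuis identity at all --- no $N(\cdot)\rhd N(\cdot)$ terms arise; expanding $\cdot$ and regrouping gives $\mathrm{Der}_{\rhd,\cdot}(x,y,z)=\mathrm{a}_\rhd(x,y,N(z))-N(\mathrm{a}_\rhd(x,y,z))-x\cdot(y\rhd z)-y\cdot(x\rhd z)$, which is visibly symmetric in $x,y$ by the pre-Lie identity alone.

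The genuine gap is item (4). Combining (2) and (3) yields $\mathrm{a}_\cdot(x,y,z)-\mathrm{a}_\cdot(y,x,z)=\mathrm{Der}_{\blacktriangleright,\cdot}(y,x,z)-\mathrm{Der}_{\blacktriangleright,\cdot}(x,y,z)$, an identity in which every term carries at most one $\blacktriangleright$, the remaining products being $\cdot$; it contains no occurrence of $x\blacktriangleright(y\blacktriangleright z)$ or $(x\blacktriangleright y)\blacktriangleright z$, so no amount of term-tracking can turn it into a statement about $\mathrm{a}_\blacktriangleright$. Item (4) requires its own computation from the two defining identities: applying the left pre-Lie identity at $(N(x),N(y),z)$ gives
\[
\mathrm{a}_\blacktriangleright(x,y,z)-\mathrm{a}_\blacktriangleright(y,x,z)
=\bigl(N(x)\rhd N(y)-N(N(x)\rhd y)-N(y)\rhd N(x)+N(N(y)\rhd x)\bigr)\rhd z,
\]
and the Nijenhuis identity in the form $N(x)\rhd N(y)-N(N(x)\rhd y)=N\bigl(x\rhd N(y)-N(x\rhd y)\bigr)=N(x\cdot y)$ identifies the bracketed expression with $N(x\cdot y)-N(y\cdot x)=N([x,y])$, whence the right-hand side equals $[x,y]\blacktriangleright z$. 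This is precisely the step that makes the commutator of the deformed product $\cdot$, rather than that of $\rhd$, appear. With item (4) repaired in this way, your assembly of item (5) goes through.
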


\begin{proof}
The proof follows by extended computations based on the two equalities, for $x,y,z\in A$:
\begin{align*}
	\mathrm{a}_\rhd(x,y,z) &= \mathrm{a}_\rhd(y,x,z) \\
	N(x) \rhd N(y) &= N(x\rhd N(y)) + N(N(x)\rhd y) - N^2(x\rhd y).
\end{align*}

Here is an example, for item \eqref{item1}. Let $x,y,z\in A$, we have
\begin{align*}
	\lefteqn{\mathrm{Der}_{\rhd, \cdot}(x,y,z) = x \rhd (y \cdot z) - (x\rhd y) \cdot z - y \cdot (x \rhd z)} \\ 
	&= \Big(x \rhd (y \rhd N(z)) - x \rhd N(y \rhd z)\Big) - \Big( (x\rhd y) \rhd N(z) - N((x\rhd y) \rhd z)\Big) - y \cdot (x\rhd z) \\ 
	&= \Big(x \rhd (y \rhd N(z)) - x  \cdot (y \rhd z) - N(x \rhd (y\rhd z))\Big) 
		- \Big( (x \rhd y) \rhd N(z) - N((x\rhd y) \rhd z)\Big) - y \cdot (x\rhd z) \\ 
	&= \mathrm{a}_\rhd(x, y, N(z)) - N(\mathrm{a}_\rhd(x,y,z)) - x \cdot  (y \rhd z) - y \cdot (x\rhd z).
\end{align*}
The final expression is symmetric in $x$ and $y$, because $\rhd$ is a left pre-Lie product.

A remark can be made for item \eqref{item5}. The condition that for all $x,y,z\in A$, $\mathrm{Der}_{\rhd, \cdot}(x,y,z) = 0$ is equivalent to the left multiplication for $\bullet$ being a derivation over the product $\cdot$. In that case, item \eqref{item2} reduces to the pre-Lie identity for the product $\cdot$, and it follows that $[\cdot,\cdot]$ is a Lie bracket. Moreover, the two post-Lie identities for $\blacktriangleright$ are item \eqref{item4} and $\mathrm{Der}_{\rhd, \cdot}(N(x),y,z) = \mathrm{Der}_{\rhd, \cdot}(N(x),z,y)$, which is true because both sides are equal to zero.
\end{proof}

%%%%%%%%%%%%%%%%%%%%%%%%%%%%%%%%%%%%%%%%%%%%%%%%%%%%%%%%%%%%%%%%%%%%%%%%%%%%%%%%%%%%%%%%%%%%%%%%%%%

\appendix

%%%%%%%%%%%%%%%%%%%%%%%%%%%%%%%%%%%%%%%%%%%%%%%%%%%%%%%%%%%%%%%%%%%%%%%%%%%%%%%%%%%%%%%%%%%%%%%%%%%

\section{An alternative formalism: crossed morphisms}
\label{ssec:crossed_morphisms}

In this section, we develop an approach based on the notion of crossed morphism, which is slightly more general than post-groups. The reader is referred to the textbook \cite{hilgert-neeb2010} for more background on crossed morphisms, as well as \cite{KIA2023} and \cite{mencattini-quesney2021} regarding links to post-Lie algebras. In reference \cite{mencattini-quesney2021} the problem of integration of post-Lie algebras in the framework of crossed morphisms was addressed.

\begin{defn}
Let $(H, \times)$ and $(G, \cdot)$ be two groups, such that $H$ acts on $G$ on the left by automorphisms. Let us denote the action by $\hookrightarrow$. A \textit{crossed morphism} is a map $\varphi : H \to G$ satisfying the so-called crossed morphism property (relative to the action by $\hookrightarrow$) for all $a,b\in H$,
\[
	 \varphi(a \times b) = \varphi (a) \cdot \big(a \hookrightarrow \varphi (b)\big).
\]
\end{defn}

\begin{prop}
\label{prop:post-group_Id_crossed_mor}
Let $(G, \cdot, \rhd)$ be a post-group, with Grossman--Larson law $*$. The group $(G, *)$ acts by automorphisms on $G$ by $g \rhd h$. For this action the identity map $\Id : (G, *) \to (G, \cdot)$ is a crossed morphism.
\end{prop}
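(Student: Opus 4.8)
The plan is to verify, in turn, the two assertions packaged into the proposition: that $g \rhd h$ defines a left action of the Grossman--Larson group $(G,*)$ on $G$ by automorphisms of $(G,\cdot)$, and that, for this action, $\Id$ satisfies the crossed morphism property. The crossed morphism property turns out to be essentially a restatement of the definition of $*$, so the genuine work lies in checking the action axioms.

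First I would record that, for each $g \in G$, the map $L_g^\rhd : h \mapsto g \rhd h$ is by the very definition of a post-group an automorphism of $(G,\cdot)$; this is built into Definition \ref{def:postgroup}. To see that $g \mapsto L_g^\rhd$ is a homomorphism from $(G,*)$ into the automorphism group, compatible with a \emph{left} action, I would invoke the post-group axiom $(a * b) \rhd c = a \rhd (b \rhd c)$, which says exactly $L_{a*b}^\rhd = L_a^\rhd \circ L_b^\rhd$.

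The one step that is not purely definitional is showing that the unit of $(G,*)$ acts trivially. Here I would first note that $e \rhd e = e$, since the automorphism $L_e^\rhd$ fixes the unit $e$ of $(G,\cdot)$, whence $e * e = e \cdot (e \rhd e) = e$. Applying the post-group axiom with $a = b = e$ then gives $e \rhd c = e \rhd (e \rhd c)$, and injectivity of $L_e^\rhd$ forces $e \rhd c = c$ for all $c \in G$. This simultaneously identifies $e$ as the unit of $(G,*)$, because $e * b = e \cdot (e \rhd b) = e \rhd b = b$ and $b * e = b \cdot (b \rhd e) = b \cdot e = b$. Combined with the previous paragraph, this establishes that $g \rhd h$ is indeed a left action of $(G,*)$ by automorphisms.

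Finally, the crossed morphism property for $\Id$ relative to this action reads $\Id(a * b) = \Id(a) \cdot \bigl(a \hookrightarrow \Id(b)\bigr)$, that is, $a * b = a \cdot (a \rhd b)$, which is nothing but the defining formula \eqref{def:GLprod} of the Grossman--Larson product. The main -- and really the only -- obstacle is the short cancellation argument establishing $e \rhd c = c$; every other ingredient unwinds directly from the post-group axioms of Definition \ref{def:postgroup} and the definition of a crossed morphism.
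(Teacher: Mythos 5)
Your proposal is correct and follows essentially the same route as the paper: the crossed morphism identity is read off as the definition of the Grossman--Larson product, and the action axioms are unwound from the post-group axioms. Your treatment of the unit acting trivially (via $e \rhd e = e$, $e*e=e$, and injectivity of $L_e^\rhd$) is actually more explicit than the paper's, which only records $a \rhd 1 = 1$ and leaves the rest implicit.
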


\begin{proof}
Since $(G, \cdot) \to (G, \cdot),~ b\to a\rhd b$ is a group automorphism for $a\in G$, we know that $a\rhd 1 = 1$. Thus the post-group equation $a \rhd (b \rhd c) = (a*b) \rhd c$ exactly says that $\rhd$ is a group action of $(G, *)$ over $G$.

It remains to show that $\Id : (G, *) \to (G, \cdot)$ is a crossed morphism. In fact for $g, h\in G$, $g * h = g \cdot (g\rhd h)$ is exactly the definition of $*$ such that $\Id(g * h ) =  \Id(g) \cdot (g\rhd \Id(h))$.
\end{proof}

Note that not all crossed morphisms come from this construction, but it is almost the case for bijective crossed morphisms.

\begin{prop}
\label{prop:bij_crossed_mor_post-group}
Let $\varphi:(H,\times) \to (G, \cdot)$ be a crossed morphism relative to the action by $\hookrightarrow$. Assume furthermore that $\varphi$ is bijective. Then there is a post-group structure $(G, \cdot, \rhd)$ with Grossman--Larson law such that $\varphi : (H, \times) \to (G, *)$ is a group isomorphism. Moreover, $\rhd$ can be given be $a \rhd b := \varphi^{-1}(a) \hookrightarrow b$.
\end{prop}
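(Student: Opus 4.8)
The plan is to define the candidate action explicitly and then read off all the required properties from the crossed morphism identity together with the fact that $\hookrightarrow$ is a genuine left action by automorphisms. Concretely, I would set, for $a,b \in G$,
\[
	a \rhd b := \varphi^{-1}(a) \hookrightarrow b,
\]
which is well-defined since $\varphi$ is a bijection. For each fixed $a$, the map $L_a^\rhd = \varphi^{-1}(a) \hookrightarrow (-)$ is an automorphism of $(G,\cdot)$, because $H$ acts on $G$ by automorphisms; this already secures the automorphism requirement in Definition \ref{def:postgroup}.

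Next I would show that $\varphi$ intertwines $\times$ with the Grossman--Larson product $*$ attached to $\rhd$. Rewriting $a \hookrightarrow \varphi(b) = \varphi^{-1}(\varphi(a)) \hookrightarrow \varphi(b) = \varphi(a) \rhd \varphi(b)$, the crossed morphism property becomes, for $a,b \in H$,
\[
	\varphi(a \times b) = \varphi(a)\cdot\big(\varphi(a)\rhd\varphi(b)\big) = \varphi(a) * \varphi(b).
\]
Thus $\varphi$ is a homomorphism from $(H,\times)$ to $(G,*)$, and being bijective it is an isomorphism onto $(G,*)$. In particular $(G,*)$ is a group, and transporting the structure back gives $\varphi^{-1}(x*y) = \varphi^{-1}(x)\times\varphi^{-1}(y)$ for all $x,y \in G$.

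It then remains to verify the post-group compatibility axiom $(x*y)\rhd z = x\rhd(y\rhd z)$. Using that $\hookrightarrow$ is a left action,
\[
	x\rhd(y\rhd z) = \varphi^{-1}(x)\hookrightarrow\big(\varphi^{-1}(y)\hookrightarrow z\big) = \big(\varphi^{-1}(x)\times\varphi^{-1}(y)\big)\hookrightarrow z,
\]
whereas $(x*y)\rhd z = \varphi^{-1}(x*y)\hookrightarrow z$. By the identity $\varphi^{-1}(x*y)=\varphi^{-1}(x)\times\varphi^{-1}(y)$ established above, the two expressions coincide, so $(G,\cdot,\rhd)$ is a post-group whose Grossman--Larson law is $*$, and $\rhd$ is given by the stated formula.

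There is no serious obstacle here: the argument is essentially a dictionary between the crossed morphism language and the post-group language. The only point demanding care is the consistent bookkeeping of $\varphi$, $\varphi^{-1}$, and the direction of the left action $\hookrightarrow$ (in particular the rewriting $a\hookrightarrow\varphi(b)=\varphi(a)\rhd\varphi(b)$). One should also note at the outset that $\hookrightarrow$ being an action \emph{by automorphisms} is exactly what is needed both for $L_a^\rhd \in \Aut(G,\cdot)$ and for collapsing the iterated action into a single one in the compatibility axiom.
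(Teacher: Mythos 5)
Your proposal is correct and follows essentially the same route as the paper: define $a \rhd b := \varphi^{-1}(a) \hookrightarrow b$, observe that $L_a^\rhd$ is an automorphism since $\hookrightarrow$ acts by automorphisms, and derive the compatibility axiom from the left-action property combined with the crossed morphism identity (which the paper invokes directly inside the computation, whereas you first record it as $\varphi^{-1}(x*y)=\varphi^{-1}(x)\times\varphi^{-1}(y)$ — a harmless reordering). No gaps.
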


\begin{proof}
Define $a \rhd b := \varphi^{-1}(a) \hookrightarrow b$. Then for any $a \in G$, $(G,\cdot) \to (G,\cdot),~ b \mapsto a\rhd b$ is a group automorphism. Moreover, for $a,b,c \in G$,
\begin{align*}
	a\rhd (b\rhd c) &= \varphi^{-1}(a) \hookrightarrow (\varphi^{-1}(b) \hookrightarrow c) \\ 
	&= (\varphi^{-1}(a) \times \varphi^{-1}(b)) \hookrightarrow c \\ 
	&= \varphi^{-1}(a \cdot (\varphi^{-1}(a) \hookrightarrow b)\big) \hookrightarrow c \\ 
	&= (a \cdot (a\rhd b)\big) \rhd c, 
\end{align*}
thus $(G, \cdot, \rhd)$ is a post-group. Finally, for $a,b\in H$, 
\begin{align}
\label{eq:crossedmorph1}
	\varphi (a \times b) = \varphi (a) \cdot (a\hookrightarrow \varphi(b)) = \varphi(a) * \varphi(b),
\end{align}
so $\varphi : (H, \times) \to (G, *)$ is a group isomorphism.
\end{proof}

\begin{rmk}
\label{rmk:relRotaBaxter}
From \eqref{eq:crossedmorph1} and the fact that $\varphi$ is invertible, we deduce for $x,y\in H$ that
\begin{align}
\label{eq:crossedmorph2}
	\varphi^{-1}(x) \times \varphi^{-1}(y) 
	= \varphi^{-1}\big(x \cdot (\varphi^{-1}(x) \hookrightarrow y)\big),   
\end{align}
which implies that $\varphi^{-1}$ is a relative Rota--Baxter operator \cite[Definition 3.1]{JiSheZhu2021}.
\end{rmk}

\begin{coroll}
Let $\varphi:(H,\times) \to (G, \cdot)$ be a bijective crossed morphism, and let $(G, \cdot, \rhd)$ be the associated post-group. Then the map $\varphi$ factorises as 
\[\begin{array}{ccccc}
	(H, \times) & \overset{\varphi}{\longrightarrow} & (G, *) & \overset{\Id}{\longrightarrow} & (G, \cdot),	
\end{array}\]
which is the composition of a group isomorphism with a crossed morphism, thanks to Proposition \ref{prop:post-group_Id_crossed_mor}. Therefore, all bijective crossed morphisms are isomorphic to a crossed morphism coming form a post-group. Moreover, the converse is also true: Any composition of an isomorphism with a crossed morphism is again a crossed morphism.
\end{coroll}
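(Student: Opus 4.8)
The plan is to treat the factorisation as an immediate consequence of the two preceding propositions, and then to establish the converse by transporting the action along the isomorphism and performing a one-line substitution. First I would recall that by Proposition~\ref{prop:bij_crossed_mor_post-group} the bijective crossed morphism $\varphi$ produces a post-group $(G,\cdot,\rhd)$, with $a\rhd b:=\varphi^{-1}(a)\hookrightarrow b$, for which the very same underlying map $\varphi$ becomes a group isomorphism $\varphi:(H,\times)\to(G,*)$ onto the Grossman--Larson group. Since $(G,*)$ and $(G,\cdot)$ share the underlying set $G$ and $\Id$ is literally the identity of $G$, the set-level equality $\varphi=\Id\circ\varphi$ holds trivially. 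By Proposition~\ref{prop:post-group_Id_crossed_mor}, $\Id:(G,*)\to(G,\cdot)$ is a crossed morphism for the action $g\rhd h$ of $(G,*)$ on $G$. This displays $\varphi$ as the isomorphism $\varphi:(H,\times)\to(G,*)$ followed by the post-group crossed morphism $\Id$, which is the asserted factorisation; since $\Id$ arises from a post-group, every bijective crossed morphism is, up to precomposition with an isomorphism, one coming from a post-group.

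For the converse, I would prove the general statement that the composite of a crossed morphism with a group isomorphism is again a crossed morphism, in both orders. Consider a group isomorphism $\psi:(K,\star)\to(H,\times)$ and a crossed morphism $\chi:(H,\times)\to(G,\cdot)$ relative to an action $\hookrightarrow$. The key step is to transport the action: define $k\hookrightarrow' g:=\psi(k)\hookrightarrow g$, which, because $\psi$ is a homomorphism and $\hookrightarrow$ acts by automorphisms, is again an action of $K$ on $G$ by automorphisms. The crossed-morphism property for $\chi\circ\psi$ then follows by substitution,
\begin{align*}
	(\chi\circ\psi)(a\star b)
	&=\chi\big(\psi(a)\times\psi(b)\big)
	=\chi(\psi(a))\cdot\big(\psi(a)\hookrightarrow\chi(\psi(b))\big)\\
	&=(\chi\circ\psi)(a)\cdot\big(a\hookrightarrow'(\chi\circ\psi)(b)\big).
\end{align*}
The case of post-composition by an isomorphism $\theta:(G,\cdot)\to(G',\cdot')$ is parallel: one sets $a\hookrightarrow'' g':=\theta\big(a\hookrightarrow\theta^{-1}(g')\big)$ and the same manipulation, using that $\theta$ is a homomorphism, shows $\theta\circ\chi$ is a crossed morphism.

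The argument is entirely formal, so I do not expect a genuine obstacle; the only real content is choosing the correct transported action in each case, after which the crossed-morphism identity drops straight out of the homomorphism property of the isomorphism. The one point requiring care is to confirm that transporting an action along an isomorphism preserves the requirement, built into the definition of crossed morphism, that the group act \emph{by automorphisms} --- this is precisely where bijectivity of $\psi$ (respectively $\theta$) is used.
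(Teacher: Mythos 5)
Your proposal is correct and follows essentially the same route the paper intends: the factorisation is the set-level identity $\varphi=\Id\circ\varphi$ combined with Propositions \ref{prop:bij_crossed_mor_post-group} and \ref{prop:post-group_Id_crossed_mor}, and the converse is the standard transport of the action along the isomorphism, exactly as in your displayed computation (the paper leaves all of this implicit). One small quibble: in the pre-composition case the transported action $k\hookrightarrow' g:=\psi(k)\hookrightarrow g$ is by automorphisms for any homomorphism $\psi$, so bijectivity is genuinely needed only in the post-composition case where $\theta^{-1}$ appears.
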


\begin{defn}
Define the product $\boxconv$ over $G_B^{I,l}$, such that $\lambda : G_B^\inv \to G_B^{I,l}$, $F \mapsto \lambda(F)=IF$ is a group isomorphism, i.e., for $F,G\in G_B^\inv$, 
\[
	(IF) \boxconv (IG) := I (F\boxcon G).
\]
\end{defn}
\begin{rmk}
This boxed convolution operation corresponds to the one from \cite{part1}. It is the natural generalisation of the scalar-valued boxed convolution from \cite{nica_speicher_book}.
\end{rmk}

Let us now explore the crossed morphisms in our setup. First, the maps 
$$
	\Id : (G_B^\inv, \star_l) \to (G_B^\inv, \cdot),
	\qquad \Id : (G_B^\inv, \star_r) \to (G_B^\inv, \overline\cdot), 
	\qquad \Id : (G_B^\inv, \sqdot) \to (G_B^\inv, \cdot)
$$ 
are all crossed morphisms thanks to Proposition \ref{prop:post-group_Id_crossed_mor}. Moreover, we have group isomorphisms
\[
\begin{array}{ccc}
	(G_B^\inv, \boxcon) & \overset{S^l}{\longrightarrow} & (G_B^\inv, \sqdot) \\ 
	(G_B^{I,l}, \boxconv) & \overset{S^l\circ\lambda^{-1}}{\longrightarrow} & (G_B^\inv, \sqdot)
\end{array}
\]
and therefore we get two crossed morphisms
\[
\begin{array}{ccc}
	(G_B^\inv, \boxcon) & \overset{S^l}{\longrightarrow} & (G_B^\inv, \cdot) \\ 
	(G_B^{I,l}, \boxconv) & \overset{S^l\circ\lambda^{-1}}{\longrightarrow} & (G_B^\inv, \cdot)
\end{array}.
\]
We also have an isomorphism
\[
\begin{array}{cccc}
	\lambda : & (G_B^\inv, \star_l) & \longrightarrow & (G_B^{I,l}, \circ) \\ 
		& F & \mapsto & IF
\end{array}
\]
and therefore a crossed morphism
\[
\begin{array}{ccc}
	(G_B^{I,l}, \circ) & \overset{\lambda^{-1}}{\longrightarrow} & (G_B^\inv, \cdot).
\end{array}
\]

Note that we have the commutative diagram
\[\begin{tikzcd}
	(G_B^{I,l}, \boxconv) \arrow[dd, "f \mapsto f^{\circ-1}", leftrightarrow] \arrow[dr, "S^l\circ\lambda^{-1}"] & \\
	& (G_B^\inv, \cdot) \\
	(G_B^{I,l}, \circ) \arrow[ru, "\lambda^{-1}" right] & 
\end{tikzcd}
\]
where the two arrows on the right are crossed morphisms, but the vertical arrow on the left lacks structure. It can be expanded as follows:

\[\begin{tikzcd}
(G_B^{I,l}, \boxconv) \arrow[dd, "f \mapsto f^{\circ-1}", leftrightarrow] \arrow[r, "\lambda^{-1}", red] & (G_B^\inv, \boxcon) \arrow[dd, "S^l", leftrightarrow] \arrow[r, "S^l", red] & (G_B^\inv, \sqdot) \arrow[dr, "\Id", blue]& \\
&&& (G_B^\inv, \cdot) \\ 
(G_B^{I,l}, \circ) \arrow[r, "\lambda^{-1}", red] & (G_B^\inv, \star_l) \arrow[uur, "\Id", leftrightarrow] \arrow[urr, "\Id", blue] & &
\end{tikzcd}
\]
where red arrows are isomorphisms, and blue arrows are crossed morphisms. Composing a blue arrow by red arrows again gives a blue arrow. Note that this diagram has a right-counterpart, with $G_B^{I,r}$, $\overline \boxcon$, a product analog to $\boxconv$, $\star_r$, $S^r$, $\rho$, $\sqdot'$ replacing respectively, $G_B^{I,l}$, $\boxcon$, $\boxconv$, $\star_l$, $S^l$, $\lambda$, $\sqdot$. Finally, only considering the group laws on $G_B^\inv$, and adding the similar maps for $\sqdot'$ and $\star_r$ leads to the commutative diagram:
\[
\begin{tikzcd}
& (G_B^\inv, \star_l) \arrow[d, "\Id"] \arrow[dr, "\Id", blue] & \\ 
(G_B^\inv, \boxcon) \arrow[d, "S^\boxcon", red] \arrow[ur, "S^l"] \arrow[r, "S^l", red] & (G_B^\inv, \sqdot) \arrow[r, "\Id", blue] \arrow[d, "\sigma", red] & (G_B^\inv, \cdot) \arrow[d, "\sigma", red] \\ 
(G_B^\inv, \overline\boxcon) \arrow[dr, "S^r"] \arrow[r, "S^r", red] & (G_B^\inv, \sqdot') \arrow[r, "\Id", blue] \arrow[d, "\Id"] & (G_B^\inv, \overline\cdot)\\ 
& (G_B^\inv, \star_r) \arrow[ur, "\Id", blue] &
\end{tikzcd}
\]

Note: The product $\overline \boxcon$ is the opposite product for $\boxcon$, it should not be confused with the product $\boxconvred$ from reference \cite{part1}.

%%%%%%%%%%%%%%%%%%%%%%%%%%%%%%%%%%%%%%%%%%%%%%%%%%%%%%%%%%%%%%%%%%%%%%%%%%%%%%%%%%%%%%%%%%%%%%%%%%%
%%%%%%%%%%%%%%%%%%%%%%%%%%%%%%%%%%%%%%%%%%%%%%%%%%%%%%%%%%%%%%%%%%%%%%%%%%%%%%%%%%%%%%%%%%%%%%%%%%%
%%%%%%%%%%%%%%%%%%%%%%%%%%%%%%%%%%%%%%%%%%%%%%%%%%%%%%%%%%%%%%%%%%%%%%%%%%%%%%%%%%%%%%%%%%%%%%%%%%%

\end{document}